\numberwithin{equation}{section}
\newtheorem{theorem}{Theorem}[section]
\newtheorem{lemma}[theorem]{Lemma}
\newtheorem{corollary}[theorem]{Corollary}
\newtheorem{proposition}[theorem]{Proposition}
\theoremstyle{definition}
\newtheorem{definition}[theorem]{Definition}
\newtheorem{assumption}[theorem]{Assumption}
\newtheorem{example}[theorem]{Example}
\theoremstyle{remark}
\newtheorem{remark}[theorem]{Remark}
\begin{document}

\def\be{\begin{eqnarray}}
\def\ee{\end{eqnarray}}
\def\p{\partial}
\def\no{\nonumber}
\def\eps {\epsilon}
\def\de{\delta}
\def\De{\Delta}
\def\om{\omega}
\def\Om{\Omega}
\def\f{\frac}
\def\th{\theta}
\def\la{\lambda}
\def\lab{\label}
\def\b{\bigg}
\def\var{\varphi}
\def\na{\nabla}
\def\ka{\kappa}
\def\al{\alpha}
\def\La{\Lambda}
\def\ga{\gamma}
\def\Ga{\Gamma}
\def\ti{\tilde}
\def\wti{\widetilde}
\def\wh{\widehat}
\def\ol{\overline}
\def\ul{\underline}
\def\Th{\Theta}
\def\si{\sigma}
\def\Si{\Sigma}
\def\oo{\infty}
\def\q{\quad}
\def\z{\zeta}
\def\M{\mathbf}
\def\co{\coloneqq}
\def\eqq{\eqqcolon}
\def\bt{\begin{theorem}}
\def\et{\end{theorem}}
\def\bc{\begin{corollary}}
\def\ec{\end{corollary}}
\def\bl{\begin{lemma}}
\def\el{\end{lemma}}
\def\bp{\begin{proposition}}
\def\ep{\end{proposition}}
\def\br{\begin{remark}}
\def\er{\end{remark}}
\def\bd{\begin{definition}}
\def\ed{\end{definition}}
\def\bpf{\begin{proof}}
\def\epf{\end{proof}}
\def\bex{\begin{example}}
\def\eex{\end{example}}
\def\bq{\begin{question}}
\def\eq{\end{question}}
\def\bas{\begin{assumption}}
\def\eas{\end{assumption}}
\def\ber{\begin{exercise}}
\def\eer{\end{exercise}}
\def\mb{\mathbb}
\def\mbR{\mb{R}}
\def\mbZ{\mb{Z}}
\def\mbf{\mathbf}
\def\u{\mbf{u}}
\def\v{\mbf{v}}
\def\U {\mbf{U}}
\def\e{\mbf{e}}
\def\x{\mbf{x}}
\def\mc{\mathcal}
\def\mcS{\mc{S}}
\def\lan{\langle}
\def\ran{\rangle}
\def\lb{\llbracket}
\def\rb{\rrbracket}
\def\div {\mathrm{div}\,}
\def\curl {\mathrm{curl}\,}

\title{ Structural stability of three dimensional transonic shock flows with an external force}
\author{Shangkun Weng\thanks{School of mathematics and statistics, Wuhan University, Wuhan, Hubei Province, 430072, People's Republic of China. Email: skweng@whu.edu.cn}\and
Zihao Zhang\thanks{School of mathematics and statistics, Wuhan University, Wuhan, Hubei Province, 430072, People's Republic of China. Email: zhangzihao@whu.edu.cn} \and Yan Zhou\thanks{School of mathematics and statistics, Wuhan University, Wuhan, Hubei Province, 430072, People's Republic of China. Email: yanz0013@whu.edu.cn}}
\date{}
\maketitle
 \newcommand{\ms}{\mathcal{S}}
 \newcommand{\mn}{\mathcal{N}}
\begin{abstract}
  We establish the existence and uniqueness of the transonic shock solution for steady isentropic Euler system with an external force in a rectangular cylinder under the three-dimensional perturbations for the incoming supersonic flow, the exit pressure and the external force. The external force has a stabilization effect on the transonic shocks in flat nozzles and the transonic shock is completely free, we do not require it passing through a fixed point. By utilizing the deformation-curl decomposition to decouple the hyperbolic and elliptic modes in the steady Euler system effectively and reformulating the Rankine-Hugoniot conditions, the transonic shock problem is reduced to a deformation-curl first order system for the velocity field with nonlocal terms supplementing with an unusual second order differential boundary condition on the shock front, an algebraic equation for determining the shock front and two transport equations for the Bernoulli's quantity and the first component of the vorticity.
\end{abstract}

\begin{center}
\begin{minipage}{5.5in}
Mathematics Subject Classifications 2020: 35L67, 35M12, 76H05, 76N15.\\
Key words:  transonic shocks,  stabilization effect on the external force,  the deformation-curl decomposition, Rankine-Hugoniot conditions.
\end{minipage}
\end{center}

\section{Introduction and  main results}\noindent
\par In this paper, we investigate the   transonic shock problem for steady Euler flows in a rectangular cylinder under the external force,
which  is governed by the following  system:
 \begin{equation}\label{1-1}
\begin{cases}
\p_{x_1}(\rho u_1)+\p_{x_2}(\rho u_2)+\p_{x_3}(\rho u_3)=0,\\
\p_{x_1}(\rho u_1^2+P)+\p_{x_2}(\rho u_1u_2)+\p_{x_3}(\rho u_1u_3)=\rho \p_{x_1}\Phi,\\
\p_{x_1}(\rho u_1u_2)+\p_{x_2}(\rho u_2^2+P)+\p_{x_3}(\rho u_2u_3)=\rho \p_{x_2}\Phi,\\
\p_{x_1}(\rho u_1u_3)+\p_{x_2}(\rho u_2u_3)+\p_{x_3}(\rho u_3^2+P)=\rho \p_{x_3}\Phi.\\
\end{cases}
\end{equation}
Here $ \textbf{u} = (u_1, u_2,u_3) $ is the velocity field, $ \rho $ is the density, $ P $ is the pressure and $\Phi $ is the potential force. We only consider the isentropic polytropic gases, therefore the equation of state is given by $ P=A\rho^{\gamma} $, where  $ A $ is a positive constant and $ \gamma> 1 $ is the adiabatic constant. For convenience, we take $A=1$ in this paper. Denote the sound speed by  $ c(\rho)=\sqrt{P^{\prime}(\rho)}$. The  system \eqref{1-1} is  hyperbolic  for supersonic flows (i.e. $ |\textbf{u}|>c(\rho) $) and  hyperbolic-elliptic coupled  for subsonic flows (i.e. $ |\textbf{u}|<c(\rho) $). 
\par 
There have been many results on transonic shock flows in nozzles for various situations. The existence of planar normal shock solutions can be easily established for steady flows in finitely and infinitely long flat nozzles. However, the position of the shock front can be arbitrary in the flat nozzle. How to determine the position of the shock front uniquely is a crucial issue. In \cite{CGF03}, the authors proved the existence of transonic shock in the finitely long nozzles for multi-dimensional potential flows with a given potential value at the exit by assuming that the shock front passed through a given point. Under the same restriction, the authors in \cite{ XY05,YX08} proved some ill-posedness results for the potential flows with the exit pressure. For the steady full Euler equations on 2-D nozzles with slowly varying cross-sections, \cite{CCS06,CCM07} proved the existence of a transonic shock. The stability of the transonic shock to steady Euler flows under 2-D perturbation was investigated in \cite{CS05,XYY09}. The existence and stability of transonic shocks in perturbed 3-D compressible flows passing a duct were studied in \cite{CS08, CY08}, and this conclusion is also established under the assumption that the shock front passes through a given point. The authors in \cite{FX21} eliminated the artificial assumption and established the existence of transonic shock solutions to the 2D compressible Euler system in almost flat nozzles.

 \par  On the other hand, some significant progress for the well-posedness of the transonic shock problem in two dimensional divergent nozzles under the perturbations of the exit pressure were established in \cite{LXY09,LYX09,XY08}. It is shown in \cite{LXY13} that the transonic shock in a 2-D straight divergent nozzle is structurally stable by perturbations of the nozzle wall and outlet pressure. In \cite{LYX10,LXY10}, the existence of a transonic shock for three-dimensional axisymmetric flows without swirl in a conical nozzle is demonstrated to be structurally stable under appropriate perturbations of the outlet pressure. For the structural stability under axisymmetric perturbation of the nozzle wall, a modified Lagrangian coordinate was introduced in \cite{WXX21} to deal with both corner singularities near the intersection points of the shock surface and the nozzle boundary and artificial singularity near the axis. Furthermore, the authors in \cite{WXY21,WYX21} studied the radial symmetric flows with nonzero angular velocity with or without shock in an annulus. The stability of spherically symmetric transonic shocks in a spherical shell was studied in \cite{LXY16} by requiring that the background shock solution satisfies the ``Structural Condition" which seems difficult to verify. Recently, Weng and Xin in \cite{WX23} had made a substantial progress and established the existence and stability of cylindrical transonic shock solutions under three dimensional perturbations of the incoming flows and the exit pressure without any restriction on the background transonic shock solutions. 

\par Let $ L_0 $, $ L_1(>L_0) $ be fixed positive constants. The nozzle is given by
\begin{equation*}
\mn= \{(x_1,x_2,x_3): L_0<x_1<L_1, \  (x_2,x_3)\in E\},\quad E=(-1,1)\times (-1,1).
\end{equation*}

First, we focus on the 1-D steady transonic shock flow with an external force. Namely, we solve
\begin{equation}\label{1-2}
\begin{cases}
		(\bar{\rho} \bar{u})'(x_1)=0,\\
		\bar{\rho} \bar{u} \bar{u}'+ \frac{d}{dx_1} P(\bar{\rho})= \bar{\rho}
		\bar{f}(x_1),\\
		\bar{\rho}(L_0)=\rho_0>0,\ \ \bar{u}(L_0)= u_0>0,\\
        \bar{P} (L_1) = P_e,
\end{cases}
\end{equation}
where  the flow state at the entrance $x_1=L_0$ is supersonic, i.e., $u_0^2>c^2(\rho_0)=\gamma
\rho_0^{\gamma-1}$. By employing the monotonicity relation between the shock position and the end pressure, the following lemma was established in \cite{WY22} to show that there is a unique transonic shock solution to \eqref{1-2} when the end pressure is a suitably prescribed constant $P_e$ and $f (x_1) > 0$ for any $ x_1 \in [L_0, L_1]$. Meanwhile, it is shown that the external force has a stabilization effect on the transonic shock in the nozzle and the shock position is uniquely determined.

\begin{lemma} Suppose that the initial state $(u_0,\rho_0)$
at $x_1=L_0$ is supersonic and the external force $ \bar{f}>0 $ for any $x_1 \in [L_0, L_1]$, there are two positive constants $P_0,P_1$ such that if the end pressure $P_e\in (P_1, P_0)$, there exists a unique piecewise transonic shock solution
\begin{equation}\no
\mathbf{\bar\Psi}(\mathbf{x})=(\mathbf{\bar u},\bar \rho)(\mathbf{x})=
\begin{cases}
\mathbf{\bar\Psi}^-(\mathbf{x})=(\bar u^-(x_1),0,0,\bar \rho^-(x_1))\quad {\rm{if}}\ L_0<x_1<L_s,\\
\mathbf{\bar\Psi}^+(\mathbf{x})=(\bar u^+(x_1),0,0,\bar \rho^+(x_1))\quad {\rm{if}}\ L_s<x_1<L_1,
\end{cases}
\end{equation}
 with a shock located at $x_1=L_{s}\in (L_0,L_1)$. Across the shock, the Rankine-Hugoniot conditions and entropy condition are satisfied:
 \begin{equation*}
 \begin{cases}
  [{ \bar\rho} {\bar u}](L_s)=[{\bar\rho} { \bar u}^2+\bar P](L_s)=0,\\
 [\bar P](L_s)>0,
 \end{cases}
 \end{equation*}
 where $[g] (L_s) \co g(L_s+0) - g(L_s-0)$ denotes the jump of $g$ at $x_1 = L_s$. In addition, the shock position $x_1=L_s$ increases as the exit pressure $P_{e}$
decreases. The shock position $L_s$ approaches to $L_1$ if $P_{e}$ goes to $P_1$ and $L_s$ tends to $L_0$ if $P_{e}$ goes to $P_0$.
\end{lemma}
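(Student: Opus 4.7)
The plan is to parametrize candidate piecewise-smooth solutions by the shock location $L_s \in (L_0, L_1)$, produce a continuous exit-pressure map $\Pi(L_s)$, and invert it using strict monotonicity.

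Mass conservation together with the first R--H condition $[\bar\rho \bar u](L_s) = 0$ gives the global constant flux $\bar\rho \bar u \equiv m_0 \co \rho_0 u_0$. Dividing the momentum equation by $\bar\rho$ and using $\bar P = \bar\rho^\gamma$ yields Bernoulli's law
\begin{equation*}
B'(x_1) = \bar f(x_1), \qquad B \co \tfrac{1}{2}\bar u^2 + \tfrac{\gamma}{\gamma-1}\bar\rho^{\gamma-1},
\end{equation*}
on each smooth piece. Since $\bar\rho = m_0/\bar u$, one computes $\frac{dB}{d\bar u} = \bar u(1 - c^2/\bar u^2)$, strictly positive in the supersonic regime and strictly negative in the subsonic regime, so $B$ determines the state uniquely once the flow regime is specified.

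For each candidate $L_s \in (L_0, L_1)$, integrate $(B^-)' = \bar f$ on $[L_0, L_s]$ with $B^-(L_0) = \tfrac{1}{2}u_0^2 + \tfrac{\gamma}{\gamma-1}\rho_0^{\gamma-1}$ and extract the supersonic root $(\bar u^-, \bar\rho^-)$. Because the auxiliary function $g(u) \co m_0 u + (m_0/u)^\gamma$ is strictly decreasing on $(0, c_*)$ and strictly increasing on $(c_*, \infty)$, where $c_*$ is the sonic speed at flux $m_0$, the steady R--H system $[\bar\rho \bar u] = [\bar\rho \bar u^2 + \bar P] = 0$ together with the entropy inequality $[\bar P] > 0$ admits a unique subsonic solution $(\bar u^+, \bar\rho^+)(L_s)$. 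Integrating $(B^+)' = \bar f$ on $[L_s, L_1]$ and again selecting the subsonic root produces a continuous exit-pressure function $\Pi(L_s) \co (\bar\rho^+(L_1))^\gamma$ by standard ODE dependence on parameters.

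The core step is to show $\Pi'(L_s) < 0$. Differentiating in $L_s$ gives $\frac{d\bar u^-(L_s)}{dL_s} = \bar f(L_s)/[\bar u^-(1 - c_-^2/\bar u_-^2)] > 0$, so the pre-shock state advances with $L_s$. The implicit function theorem applied to $g(\bar u^+) = g(\bar u^-)$ returns $\frac{d\bar u^+}{d\bar u^-} = g'(\bar u^-)/g'(\bar u^+) < 0$, whence the immediate post-shock velocity decreases and density increases strictly with $L_s$. Propagating through the subsonic ODE, in which $\bar f > 0$ drives $\bar u^+$ down and $\bar\rho^+$ up, a chain-rule calculation along the subsonic trajectory confirms $\frac{d\Pi}{dL_s} < 0$. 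Setting $P_0 \co \lim_{L_s \to L_0^+}\Pi(L_s)$ and $P_1 \co \lim_{L_s \to L_1^-}\Pi(L_s)$, the intermediate value theorem then yields a unique $L_s \in (L_0, L_1)$ with $\Pi(L_s) = P_e$ for each $P_e \in (P_1, P_0)$, and the limiting behavior of $L_s$ as $P_e \to P_0$ or $P_e \to P_1$ follows by construction.

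The principal obstacle is the strict monotonicity of $\Pi$: as $L_s$ varies, both the R--H jump at $L_s$ and the length of the subsonic segment change, and one must track their combined effect on the downstream pressure. The sign assumption $\bar f > 0$ is what makes $B$ strictly monotone along each branch and forces these two effects to cooperate, which is precisely the stabilization mechanism advertised in the introduction.
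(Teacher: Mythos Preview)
The paper does not actually prove this lemma; it is quoted from \cite{WY22}, and the only information given is that the argument proceeds ``by employing the monotonicity relation between the shock position and the end pressure.'' Your proposal implements precisely that strategy, and the outline is correct.

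One sharpening of the monotonicity step is worth recording, since your last paragraph slightly mischaracterizes what happens. Carrying the chain rule through explicitly, and using that $h'(u)/g'(u) = 1/\rho$ on each branch (where $h(u) = \tfrac12 u^2 + \tfrac{\gamma}{\gamma-1}(m_0/u)^{\gamma-1}$), one obtains
\[
\frac{d}{dL_s}\,B^+(L_1)
\;=\;
\frac{d}{dL_s}\,B^+(L_s+)\;-\;\bar f(L_s)
\;=\;
\Bigl(\frac{\bar\rho^-(L_s)}{\bar\rho^+(L_s)}-1\Bigr)\bar f(L_s)\;<\;0,
\]
the sign coming from the entropy condition $\bar\rho^+ > \bar\rho^-$. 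So the two effects you identify --- the moving R--H jump and the shrinking subsonic integral of $\bar f$ --- in fact push $B^+(L_1)$ in \emph{opposite} directions; it is the entropy inequality, not ``cooperation,'' that makes the segment-length contribution dominate. Since $B$ is strictly increasing in $\rho$ along the subsonic branch, $\Pi'(L_s)<0$ follows at once, and the rest of your argument (continuity, intermediate value theorem, limiting behavior) goes through.
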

\par In this paper, the 1-D transonic shock solution $ \mathbf{\bar\Psi}$ where the shock occurs at $ x_1=L_s $ is called the background solution. Clearly, one can extend the supersonic and subsonic parts of $\mathbf{\bar\Psi} $ respectively in a natural way. For convenience, we will still call the extended subsonic and supersonic solutions $ \mathbf{\bar\Psi}^+$ and $\mathbf{\bar\Psi}^- $. This paper is going to establish the structural stability of this transonic shock solution under three perturbations of the incoming supersonic flows, the exit pressure and the external force.
\par   Suppose that the potential force $ \Phi $ and the supersonic incoming flow at the inlet $ x=L_0 $ are
\begin{equation}\label{1-12}
\begin{cases}
\Phi(x_1,x_2,x_3)=\bar \Phi(x_1)+\epsilon \Phi_0(x_1,x_2,x_3),\\
\mathbf{\Psi}^-(L_0,x_2,x_3)=\mathbf{\bar\Psi}^-(L_0)
+\epsilon(u_{10}^-, u_{20}^-,u_{30}^-, P_{0}^-)(x_2,x_3).
\end{cases}
\end{equation}
Here  $ \bar\Phi^{\prime}=\bar f $ and
$ \epsilon $ is sufficiently small. Furthermore, $ \Phi_0 (x_1,x_2,x_3)\in C^{2,\alpha}(\overline{ \mn}) $ and $ (u_{10}^-, u_{20}^-, u_{30}^-, P_{0}^-)\in (C^{2,\alpha}(\overline{ E}))^4 $ satisfy the following compatibility conditions
\begin{equation}\label{bc13}
\begin{cases}
  \p_{2}\Phi_0(x_1, \pm 1, x_3)= u_{20}^- (\pm 1, x_3)  = \p_{2}^2 u_{20}^- (\pm 1, x_3) =  \p_{2} (u_{10}^-, u_{30}^-, P_0^-) (\pm 1, x_3) = 0, \q \forall x_3 \in [- 1, 1], \\
 \p_{3}\Phi_0(x_1,x_2, \pm 1)= u_{30}^- (x_2, \pm 1) = \p_{3}^2 u_{30}^- (x_2, \pm 1) = \p_{3} (u_{10}^- , u_{20}^-, P_0^- ) (x_2, \pm 1) = 0, \q \forall x_2 \in [-1, 1].
\end{cases}
\end{equation}

On the nozzle wall, the flow satisfies  the slip boundary condition
\begin{equation}\label{1-13}
\begin{cases}
u_2(x_1,\pm 1,x_3) =0,\ \   \forall (x_1, x_3) \in [L_0, L_1] \times [-1, 1],\\
u_3(x_1,x_2,\pm 1) =0,\ \  \forall (x_1, x_2) \in [L_0, L_1] \times [-1, 1].
\end{cases}
\end{equation}
Then the well-posedness    of the system \eqref{1-1} with \eqref{1-12} and \eqref{1-13} for supersonic flows can be solved by
 the classical theory developed  in \cite{BS07}. More precisely, there exists a constant $\eps_0 > 0$ depending only on the boundary data, such that for any $0 < \eps < \eps_0$, the system \eqref{1-1}  with \eqref{1-12} and \eqref{1-13} has a unique $C^{2, \al} (\overline{\mc{N}})$ solution $\mathbf\Psi^- = (u_1^-, u_2^-, u_3^-, P^-) (x_1, x_2, x_3)$, which satisfies
\begin{equation}\no
\| (u_1^-, u_2^-, u_3^-, P^-) - (\bar{u}^-, 0, 0, \bar{P}^-) \|_{C^{2, \al} (\overline{\mc{N}})} \le C_0 \eps,
\end{equation}
and
\begin{equation}\label{bc14}
\begin{cases}
  (u_2^-, \p_2^2 u_2^-) (x_1, \pm 1, x_3) = \p_2 (u_1^-, u_3^-, P^-) (x_1, \pm 1, x_3) = 0, & \forall (x_1, x_3) \in [L_0, L_1] \times [-1, 1], \\
  (u_3^-, \p_3^2 u_3^-) (x_1, x_2, \pm 1) = \p_3 (u_1^-, u_2^-, P^-) (x_1, x_2, \pm 1) = 0, & \forall (x_1, x_2) \in [L_0, L_1] \times [-1, 1].
\end{cases}
\end{equation}
\par At the exit of the nozzle, the end pressure is prescribed by
\begin{equation}\label{1-14}
P(L_1,x_2,x_3)=P_e+\epsilon P_{ex}(x_2,x_3), \ \ (x_2, x_3) \in E,
\end{equation}
 where 
  $ P_{ex} (x_2,x_3)\in C^{2,\alpha}(\overline{ E}) $ satisfies the following compatibility conditions
 \begin{equation}\label{1-15}
\begin{cases}
\p_{x_2} P_{ex}(\pm 1,x_3)= 0,\ \  x_3\in [-1,1],\\
\p_{x_3} P_{ex}(x_2,\pm 1) =0,\ \ x_2\in [-1,1].
\end{cases}
\end{equation}
	
 \par In this paper, we want to look for a piecewise smooth solution $ \mathbf{\Psi} $ which jumps only at a shock front $ \ms=\{(x_1,x_2,x_3):x_1=\xi(x_2,x_3), x' = (x_2,x_3)\in E\}$, has the following form:
\begin{equation}\no
\mathbf{\Psi}=
\begin{cases}
\mathbf{\Psi}^-:=(u_1^-,u_2^-,u_3^-,P^-)(x_1,x'),\quad {\rm{if}}\ L_0<x_1<\xi(x'), \ x' \in E,\\
\mathbf{\Psi}^+:=(u_1^+,u_2^+,u_3^+,P^+)(x_1, x'),\quad {\rm{if}}\ \xi(x')<x_1<L_1,\  x' \in E,
\end{cases}
\end{equation}
  and satisfies the following  Rankine-Hugoniot conditions on  the shock surface $ \ms $:
\begin{equation}\label{1-18}
\begin{cases}
[\rho u_1]-\p_{x_2}\xi[\rho u_2]-\p_{x_3}\xi[\rho u_3]=0,\\
[\rho u_1^2+P]-\p_{x_2}\xi[\rho u_1u_2]-\p_{x_3}\xi[\rho u_1u_3]=0,\\
[\rho u_1u_2]-\p_{x_2}\xi[\rho u_2^2+P]-\p_{x_3}\xi[\rho u_2u_3]=0,\\
[\rho u_1u_3]-\p_{x_2}\xi[\rho u_2 u_3]-\p_{x_3}\xi[\rho u_3^2+P]=0.
\end{cases}
\end{equation}

\begin{theorem}\label{main}
  Assume that the compatibility conditions  \eqref{bc13} and \eqref{1-15}  hold. There exists a suitable constant $\eps_0 > 0$ depending only on the background solution $\mathbf{\bar\Psi}$ and the boundary data $u_{10}^-$, $u_{20}^-$, $u_{30}^-$, $P_0^-$ $P_{ex}$ such that if $0 < \eps < \eps_0$, the problem \eqref{1-1} with \eqref{1-12}, \eqref{1-13}, \eqref{1-14} and \eqref{1-18} has a unique solution $\M\Psi^+ = (u_1^+, u_2^+, u_3^+, P^+) (x) $ with the shock front $\mc{S}: x_1 = \xi (x_2, x_3)$ satisfying the following properties.
  \begin{enumerate}[(i)]
    \item The function $\xi (x_2, x_3) \in C^{3, \al} (\overline{E})$ satisfies
        \begin{equation}\no
        \| \xi (x_2, x_3) - L_s \|_{C^{3, \al} (\overline{E})} \le C_* \eps,
        \end{equation}
        and
        \begin{equation}\no
        \begin{cases}
          \p_2 \xi (\pm 1, x_3)  = \p_2^3 \xi (\pm 1, x_3)  = 0, & \forall x_3 \in [-1, 1], \\
          \p_3 \xi (x_2, \pm 1)  = \p_3^3 \xi (x_2, \pm 1) = 0, & \forall x_2 \in [-1, 1],
        \end{cases}
        \end{equation}
        where the positive constant $C_* $ depends only on the background solution, the supersonic incoming flow and the exit pressure.
    \item The solution $\M\Psi^+ = (u_1^+, u_2^+, u_3^+, P^+) (x) \in C^{2, \al } (\overline{\mc{N}^+})$ satisfies the entropy condition
        \begin{equation}\no
        P^{+}(\xi(x_2,x_3),x_2,x_3)>P^{-}(\xi(x_2,x_3),x_2,x_3)\quad {\rm{on}}\ x_1=\xi(x_2,x_3), \q \forall (x_2, x_3) \in E
        \end{equation}
        and the estimate
        \begin{equation}\no
        \| \M\Psi^+ - \bar{\M\Psi}^+ \|_{C^{2, \al } (\overline{\mc{N}^+})} \le C_* \eps,
        \end{equation}
        with the compatibility conditions
        \begin{equation}\no
        \begin{cases}
          (u_2^+, \p_2^2 u_2^+) (x_1, \pm 1, x_3) = \p_2 (u_1^+, u_3^+, P^+) (x_1, \pm 1, x_3) = 0, & \forall (x_1, x_3) \in [\xi (x_2, x_3), L_1] \times [-1, 1], \\
          (u_3^+, \p_3^2 u_3^+) (x_1, x_2, \pm 1) = \p_3 (u_1^+, u_2^+, P^+) (x_1, x_2, \pm 1) = 0, & \forall (x_1, x_2) \in [\xi (x_2, x_3), L_1] \times [-1, 1].
        \end{cases}
        \end{equation}
  \end{enumerate}
\end{theorem}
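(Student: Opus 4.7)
The plan is to reformulate the free-boundary problem for $(\mathbf{\Psi}^+, \xi)$ in the subsonic region as a coupled elliptic-hyperbolic system via the deformation-curl decomposition, and then solve it by a nonlinear iteration. First I would fix the shock surface by introducing a change of variables of the form $y_1 = L_s + \frac{L_1 - L_s}{L_1 - \xi(x_2,x_3)}(x_1 - \xi(x_2,x_3))$, $y_2=x_2$, $y_3=x_3$, which flattens $\mc{S}$ into $\{y_1 = L_s\}$ and turns the subsonic region into the fixed cylinder $(L_s, L_1)\times E$. The nonlinear dependence of the coefficients on $\nabla \xi$ is then harmless as long as $\|\xi - L_s\|_{C^{3,\al}}$ stays small.

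Next I would apply the deformation-curl decomposition in the fixed subsonic domain. Using the isentropic structure with potential force, Bernoulli's invariant $B = \tfrac12|\mathbf{u}|^2 + \tfrac{\gamma}{\gamma-1}\rho^{\gamma-1} - \Phi$ is transported: $(\mathbf{u}\cdot\nabla) B = 0$. Taking curl of the momentum equations shows that $\omega_1 = \partial_2 u_3 - \partial_3 u_2$ satisfies a transport equation along streamlines, while the remaining vorticity components $\omega_2,\omega_3$ can be expressed algebraically in terms of $\omega_1$ and derivatives of $B$. Combined with $\mathrm{div}(\rho\mathbf{u})=0$ this yields a first-order deformation-curl elliptic system for the velocity $\mathbf{u}^+$, coupled to two transport equations for $B$ and $\omega_1$, and to an algebraic/ODE relation for $\xi$; the pressure $P^+$ is then recovered pointwise from $B$ and $|\mathbf{u}^+|^2$ via Bernoulli.

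The four Rankine-Hugoniot conditions in \eqref{1-18} must then be repackaged to supply (a) Dirichlet boundary data for $B$ and $\omega_1$ on $\{y_1 = L_s\}$, in terms of $\mathbf{\Psi}^-$, $\xi$ and the traces of $\mathbf{\Psi}^+$; (b) an "unusual" second-order differential boundary condition on the shock for the velocity field, obtained by differentiating the tangential momentum jump conditions and eliminating $[P]$ via the normal momentum condition, and (c) an algebraic equation for $\xi$ obtained by projecting the combined jump relations onto the direction tangent to the background flow, which is precisely the equation that pins down the free shock position. The exit data \eqref{1-14} is rewritten, again via Bernoulli, as a nonlocal condition on $\mathbf{u}^+$ at $y_1 = L_1$, and the wall slip condition \eqref{1-13} becomes $u_2^+(y_1,\pm 1,y_3)=u_3^+(y_1,y_2,\pm 1)=0$.

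I would then define an iteration map $\mathcal{T}$ on a small ball $\{\|(\mathbf{u}^+,\xi)-(\bar{\mathbf{u}}^+,L_s)\|\le C_*\eps\}$ in $C^{2,\al}\times C^{3,\al}$: given an approximation $(\mathbf{u}^+_*,\xi_*)$, solve the two transport problems for $B$ and $\omega_1$, then solve the linearized deformation-curl system with the second-order shock boundary condition, the slip walls, and the nonlocal exit condition, and finally update $\xi$ via the algebraic shock equation. A contraction-mapping argument in these norms, together with the compatibility conditions \eqref{bc13}, \eqref{1-15}, \eqref{bc14} propagated to the corners (these are exactly what produce the even/odd parity of $\xi$ in item (i) of the theorem), yields the unique fixed point and hence the solution. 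The main obstacle will be the joint solvability of the elliptic deformation-curl system with the second-order boundary condition on the shock and the nonlocal exit condition: a standard energy or Fredholm analysis of this mixed boundary value problem in a rectangular cylinder produces one scalar compatibility condition on the data, and one must identify this compatibility relation with the algebraic equation determining $\xi$ and show it is nondegenerate. It is precisely here that the hypothesis $\bar f>0$ enters, through the strict monotonicity of the shock position with respect to the exit pressure proved in Lemma 1.1 for the one-dimensional background problem; this monotonicity linearizes to a nonvanishing coefficient in the $\xi$-equation, allowing $\xi$ to be solved uniquely and closing the iteration without requiring any artificial pinning of the shock at a prescribed point.
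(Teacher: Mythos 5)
Your overall architecture coincides with the paper's: the same shock-flattening change of variables, the same deformation-curl decomposition with transport of $B$ and $\omega_1$ and algebraic recovery of $\omega_2,\omega_3$, the same repackaging of the Rankine--Hugoniot conditions into (a) boundary data for $B$ and $\omega_1$ on the shock, (b) a second-order differential condition on the shock for the velocity, and (c) an algebraic equation pinning down $\xi$, followed by a contraction-mapping iteration. So the skeleton is right.

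The gap lies in your description of the central linear step. You assert that the mixed boundary value problem for the deformation-curl system ``produces one scalar compatibility condition on the data'' which is then to be ``identified with the algebraic equation determining $\xi$.'' That is not the mechanism, and closing the argument that way would fail. In the paper the algebraic relation $w_1(\xi,x')=b_1w_5+R_1$, with $b_1>0$ precisely because $\bar f(L_s)>0$, is used \emph{up front} to eliminate $w_5$ from every boundary condition; this converts the free-boundary problem into a fixed-boundary elliptic problem whose interior equation and boundary conditions carry nonlocal terms in the trace $v_1(L_s,y')$, and no solvability or orthogonality condition then appears. Two genuine difficulties remain that your plan does not address. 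First, after linearization the curl data $(G_1,G_2,G_3)$ is not divergence-free, so the curl system is overdetermined; the paper enlarges it with an auxiliary unknown $\Pi$ (system \eqref{pf6}) and must prove a posteriori that $\Pi\equiv 0$ at the fixed point. Second, the unique solvability of the resulting scalar elliptic equation \eqref{pf13} for the potential $\phi$ --- which has a nonlocal zeroth-order term $a_0b_1d_5(y_1)\phi(L_s,y')$ in the interior and an oblique condition $\partial_{y_1}\phi-b_4\phi=m_1$ coming from the second-order shock condition --- is not ``standard energy or Fredholm analysis'': the Fredholm alternative only reduces matters to uniqueness, and uniqueness is proved by Fourier expansion in $(y_2,y_3)$ together with a maximum principle and Hopf lemma argument that uses the signs $d_5<0$ and $b_4>0$, again consequences of $\bar f>0$. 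So the hypothesis $\bar f>0$ enters not through a single nondegenerate coefficient in the $\xi$-equation, as you suggest, but in at least two distinct places; without the sign information the uniqueness of the linearized problem --- and hence the whole iteration --- is not established.
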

\par We comment on the key ingredients of our mathematical analysis for Theorem \ref{main}. The transonic shock problem is reduced to a free boundary problem in a subsonic region, where the unknown shock surface is a part of the boundary and should be determined with the subsonic flow simultaneously.
Inspired by \cite{WX23}, we apply the deformation-curl decomposition introduced in \cite{WX19,WS19} to effectively decouple the hyperbolic and elliptic modes and reformulate the transonic shock  problem as a deformation-curl first order system for the velocity field with nonlocal terms, an algebraic equation to determine the shock front and two transport equations for the Bernoulli's quantity and the first component of the vorticity. Then by homogenizing the curl system and introducing a potential function, the deformation-curl elliptic system is reduced to a second order elliptic equation with a nonlocal term involving only the trace of the potential function on the shock front so that its unique solvability can be obtained.
\par This paper is organized as follows. In section \ref{reformulation}, we introduce the deformation-curl decomposition and reformulate the Rankine-Hugoniot jump conditions, and then reformulate the transonic shock problem in new coordinates. In section \ref{iteration}, we design an iteration scheme to prove Theorem \ref{main}. Some explicit expressions are given in Appendix \ref{appendix}.

\section{The reformulation of the transonic shock problem}\label{reformulation}\noindent

\par In this section, we first rewrite \eqref{1-1} using the deformation-curl decomposition, and reformulate the Rankine-Hugoniot conditions and boundary conditions, and finally reduce the transonic shock problem to a fixed boundary value problem by using appropriate coordinate transformation.
\subsection{The deformation-curl decomposition to the steady Euler system}\noindent

\par First, we study the hyperbolic modes in \eqref{1-1}. Denote the Bernoulli's function $ B $ by
\begin{equation}\label{B}
  B=\frac{1}{2}{|\textbf{u}|^2} +\frac{\gamma P}{(\gamma  - 1)\rho}-\Phi.
\end{equation}
Then 
$B$ satisfies the following transport equation
\begin{equation}\label{euler2}
   (u_1 \p_1 + u_2 \p_2 + u_3 \p_3) B = 0.
\end{equation}
Define the vorticity $\boldsymbol{\om} = \curl \u = ( \om_1, \om_2, \om_3)$ with
\begin{equation*}
\om_1 = \p_2 u_3 - \p_3 u_2, \, \om_2 = \p_3 u_1 - \p_1 u_3, \, \om_3 = \p_1 u_2 - \p_2 u_1.
\end{equation*}
It then follows from the third and fourth equations in \eqref{1-1} that
\begin{equation}\no
 \begin{cases}
    u_1 \om_3 - u_3 \om_1 + \p_2 B  = 0, \\
    u_2 \om_1 - u_1 \om_2 + \p_3 B = 0.
  \end{cases}
\end{equation}
Thus one  obtains
\begin{equation}\label{om2}
    \om_2 = \f {u_2 \om_1 + \p_3 B}{u_1},\ \ \om_3 = \f {u_3 \om_1 - \p_2 B}{u_1} .
\end{equation}
 Note that
\begin{equation}\label{om0}
\div \boldsymbol{\om} = \div \curl \u = 0.
\end{equation}
Substituting \eqref{om2} into \eqref{om0} yields that
\begin{equation}\label{om3}
 \b( \p_1 + \f {u_2}{u_1} \p_2 + \f {u_3}{u_1} \p_3 \b) \om_1
+ \b( \p_2 \b( \f {u_2}{u_1}\b) + \p_3 \b( \f {u_3}{u_1} \b) \b) \om_1
+  \p_2 \b(\f 1{u_1} \b) \p_3 B
- \p_3 \b( \f 1{u_1} \b) \p_2 B  = 0.
\end{equation}

Next, we analysis the elliptic modes in \eqref{1-1}. The equation \eqref{B} implies that 
\begin{equation}\label{rho1}
\rho = \rho (B, |\u|^2, \Phi) = \b( \f {\ga -1}{\ga } \b)^{\f 1 {\ga - 1}} \b( B - \f 12 |\u|^2 + \Phi \b)^{\f 1 {\ga -1}}.
\end{equation}
Substituting \eqref{rho1} into the continuity equation and combining with \eqref{euler2} lead to
\begin{equation}\label{rho2}
\begin{aligned}
&
\sum_{i=1}^3 (c^2 (B, |\u|^2, \Phi ) - u_i^2 ) \p_i u_i = u_1 (u_2 \p_1 u_2 + u_3 \p_1 u_3 - \p_1 \Phi ) \\
&\quad+ u_2 (u_1 \p_2 u_1 + u_3 \p_2 u_3 - \p_2 \Phi )
+ u_3 (u_1 \p_3 u_1 + u_2 \p_3 u_2 - \p_3 \Phi ).
\end{aligned}
\end{equation}
Together with the vorticity equations, this gives a deformation-curl system
\begin{equation}\label{dc}
\begin{cases}
 \sum_{i=1}^3 (c^2 (B, |\u|^2, \Phi ) - u_i^2 ) \p_i u_i = u_1 (u_2 \p_1 u_2 + u_3 \p_1 u_3 - \p_1 \Phi ) \\
 \quad\quad+ u_2 (u_1 \p_2 u_1 + u_3 \p_2 u_3 - \p_2 \Phi )+ u_3 (u_1 \p_3 u_1 + u_2 \p_3 u_2 - \p_3 \Phi ), \\
  \p_{x_2 } u_3 - \p_{x_3} u_2 = \om_1, \\
  \p_{x_3} u_1 - \p_{x_1} u_3 = \om_2, \\
  \p_{x_1} u_2 - \p_{x_2} u_1 = \om_3.
\end{cases}
\end{equation}
The equivalence of the system \eqref{dc} and an enlarged system with an additional unknown function is shown in Section \ref{iteration} (see equation \eqref{pf6}). This enlarged system in the subsonic region is elliptic in the sense of Agmon-Dougalis-Nirenberg \cite{ADN64}, which is verified in \cite{WS19}. 

\begin{lemma}\label{equiv1}
  Assume that $C^1$ smooth vector functions $(\rho, \u)$ is defined on a domain $\mc{N}$ which excludes the vacuum (i.e. $\rho (x) > 0$) and 
  $u_1 >0$ in $\mc{N}$. Then the following two statements are equivalent.
  \begin{enumerate}[(i)]
    \item $(\rho, {\bf u})$ satisfy the system \eqref{1-1} in $\mc{N}$;
    \item $({\bf u}, B)$ satisfy the equations \eqref{euler2}, \eqref{om2} and \eqref{dc}.
  \end{enumerate}
\end{lemma}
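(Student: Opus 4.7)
The plan is to prove the two implications separately, using the Bernoulli relation \eqref{rho1} as the algebraic bridge between the density $\rho$ and the triple $(\u,B,\Phi)$. The pivotal computation underlying both directions is the pointwise identity
\begin{equation*}
\p_j B = \u\cdot\p_j\u + \f{1}{\rho}\p_j P - \p_j\Phi,\qquad j=1,2,3,
\end{equation*}
which follows by differentiating \eqref{B} and using $\p_j\b(\f{\ga P}{(\ga-1)\rho}\b)=\f{1}{\rho}\p_j P$, valid for the polytropic state $P=\rho^{\ga}$.

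For (i) $\Rightarrow$ (ii), I would first convert the momentum equations to the non-conservative form $\u\cdot\nabla u_j+\f{1}{\rho}\p_j P=\p_j\Phi$ by expanding the divergences and applying continuity. Contracting with $\u$ and invoking the identity above immediately yields $\u\cdot\nabla B=0$, namely \eqref{euler2}. The relations \eqref{om2} are then just the $j=2,3$ non-conservative momentum equations rewritten with the identity and the definition $\om_i=(\curl\u)_i$, as already carried out in the text. To recover the first (deformation) equation of \eqref{dc}, I would multiply continuity by $c^2/\rho$, substitute $c^2\p_j\rho=\p_j P$ and the identity, and use \eqref{euler2} to cancel the $\p_j B$ terms; the resulting equation matches \eqref{rho2}. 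The remaining three equations of \eqref{dc} are nothing but the definition of $\curl\u$.

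For (ii) $\Rightarrow$ (i), I would \emph{define} $\rho$ via \eqref{rho1}; positivity is automatic since $B-\f{1}{2}|\u|^2+\Phi=c^2/(\ga-1)>0$, and direct differentiation shows that the identity above continues to hold. Reversing the computation of the previous paragraph shows that the first equation of \eqref{dc} is equivalent to the continuity equation. Similarly, the two formulas in \eqref{om2}, rewritten as $u_1\om_3-u_3\om_1+\p_2 B=0$ and $u_2\om_1-u_1\om_2+\p_3 B=0$, are exactly the $j=2,3$ momentum equations after application of the identity. The only subtle point is the $j=1$ momentum equation, which is not imposed explicitly in (ii); to extract it, I would substitute the expressions for $\p_2 B$ and $\p_3 B$ just noted into \eqref{euler2}, obtaining $u_1(\p_1 B+u_3\om_2-u_2\om_3)=0$, and then divide by $u_1>0$.

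The main obstacle is not conceptual but bookkeeping: one must keep track of which equations are being used at each step, and observe that the hypothesis $u_1>0$ enters at precisely one place, namely the extraction of the $x_1$-momentum equation from \eqref{euler2} and the two transverse momentum equations, while $\rho>0$ is used throughout to make sense of divisions by $\rho$ and of the formula \eqref{rho1}.
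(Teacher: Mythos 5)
Your proof is correct and follows essentially the same route as the paper: the forward direction reproduces the derivation of \eqref{euler2}, \eqref{om2} and \eqref{dc} carried out in Section 2.1, resting on the Crocco-type identity $\p_j B = \u\cdot\p_j\u + \rho^{-1}\p_j P - \p_j\Phi$. The reverse direction, which the paper states without writing out, is supplied correctly: continuity is recovered from the first equation of \eqref{dc} together with \eqref{euler2}, the transverse momentum equations come from \eqref{om2}, and the $x_1$-momentum equation is extracted from \eqref{euler2} after dividing by $u_1>0$, which is indeed the only place that hypothesis is needed in this direction.
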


\subsection{The reformulation of the Rankine-Hugoniot conditions and boundary conditions}\noindent
\par The steady Euler system with an external force is elliptic-hyperbolic mixed in the subsonic region, which requires careful identification of  suitable boundary conditions and their compatibility. Define
\begin{equation*}
\begin{aligned}
&
w_1 (x_1, x^\prime) = u_1 (x_1, x^\prime) - \bar{u}^+ (x_1), \q
w_j (x_1, x^\prime) = u_j (x_1, x^\prime), \, j = 2, 3,\\
&
w_4 (x_1, x^\prime) = B (x_1, x^\prime) - \bar{B}^+, \q
{\bf w} = (w_1, \cdots, w_4), \\
&
w_5 (x^\prime) = \xi (x^\prime) - L_s, \quad x^\prime=(x_2,x_3).
\end{aligned}
\end{equation*}
Then the density and the pressure can be represented as
\be\label{rho3}
\rho (x_1, x^\prime) = \rho ({\bf w}) =
\b( \f {\ga -1}{\ga } \b)^{\f 1 {\ga - 1}} \b( w_4 + \bar{B} - \f 12 (w_1 + \bar{u})^2 - \f 12 \sum_{j = 2}^3 |w_j|^2 + \eps \Phi_0 + \bar{\Phi} \b)^{\f 1 {\ga -1}}, \\\label{rho4}
P (x_1, x^\prime) = P ({\bf w}) =
\b( \f {\ga -1}{\ga}\b)^{\f {\ga} {\ga - 1}}  \b( w_4 + \bar{B} - \f 12 (w_1 + \bar{u})^2 - \f 12 \sum_{j = 2}^3 |w_j|^2 + \eps \Phi_0 + \bar{\Phi} \b)^{\f {\ga} {\ga -1}}.
\ee

We now linearize the shock front. 
It follows from the third and fourth equations in \eqref{1-18} that
\begin{equation}\label{rh1}
 \p_2 \xi = \f {J_2 (\xi, x')}{J (\xi, x')}, \quad
 \p_3 \xi = \f {J_3 (\xi, x')}{J (\xi, x')},
\end{equation}
where
\begin{equation*}
\begin{aligned}
& J (\xi, x') = [\rho u_2^2 + P] [\rho u_3^2 + P ] - ([\rho u_2 u_3])^2, \\
& J_2 (\xi, x') = [\rho u_3^2 + P ] [\rho u_1 u_2 ] - [\rho u_1 u_3] [\rho u_2 u_3], \\
& J_3 (\xi, x') = [\rho u_2^2 + P] [\rho u_1 u_3] - [\rho u_1 u_2] [\rho u_2 u_3].
\end{aligned}
\end{equation*}
Then \eqref{rh1} can be rewritten as
\begin{equation}\label{rh2}
 \begin{cases}
    \p_2 \xi (x') = a_0 w_2 (\xi (x'), x') + g_2 (\M\Psi^- (L_s + w_5, x') - \bar{\M\Psi}^- (L_s + w_5), {\bf w} (\xi, x'), w_5), \\
    \p_3 \xi (x') = a_0 w_3 (\xi (x'), x') + g_3 (\M\Psi^- (L_s + w_5, x') - \bar{\M\Psi}^- (L_s + w_5), {\bf w} (\xi, x'), w_5),
  \end{cases}
\end{equation}
where $a_0 = \f {(\bar{\rho}^+ \bar{u}^+ ) (L_s)}{[\bar{P} (L_s)]} > 0$ and
\begin{equation*}
\begin{aligned}
&
g_2 (\M\Psi^- (L_s + w_5, x') - \bar{\M\Psi}^- (L_s + w_5), {\bf w} (\xi, x'), w_5) = \f {J_2}{J} - a_0 w_2 (\xi (x'), x'), \\
&
g_3 (\M\Psi^- (L_s + w_5, x') - \bar{\M\Psi}^- (L_s + w_5), {\bf w} (\xi, x'), w_5) = \f {J_3}{J} - a_0 w_3 (\xi (x'), x').
\end{aligned}
\end{equation*}
One can regard the functions $g_i$, $i = 2, 3$ as error terms which are bounded by
\begin{equation}\label{rh3}
|g_i| \le C_* (|\M\Psi^- (L_s + w_5, x') - \bar{\M\Psi }^- (L_s + w_5)| + |{\bf w} (\xi, x')|^2 + |w_5|^2).
\end{equation}
To obtain \eqref{rh3}, we only estimate $ g_2 $. The estimate of $ g_3 $ can be treated in the same way.     In fact,
\begin{equation*}
\begin{aligned}
g_2
 = &
\b( \f {\rho^+ u_1^+}{[P]} - a_0 \b) w_2
- \f {\rho^+ u_1^+ w_2}{[P]} \f {[\rho u_3^2]}{[\rho u_2^2 + P ]}
- \f {\rho^- u_1^- u_2^-}{[\rho u_2^2 + P ]}
 \\
&
+ \f { [\rho u_1 u_2]}{[\rho u_2^2 + P ]} \f {( [\rho u_2 u_3])^2  }{[\rho u_2^2 + P] [\rho u_3^2 + P ] - ([\rho u_2 u_3])^2}  - \f {[\rho u_1 u_3] [\rho u_2 u_3]}{J},
\end{aligned}
\end{equation*}
and
\begin{equation*}
\begin{aligned}
\b( \f {\rho^+ u_1^+}{[P]} - a_0 \b) w_2
= & \f {(\bar{\rho}^+ (\xi) + \hat{\rho}) (\bar{u}^+ (\xi) + w_1)}{P^+ (\xi) - \bar{P}^+ (\xi) + \bar{P}^+ (\xi) - \bar{P}^- (\xi) + \bar{P}^- (\xi) - P^- (\xi)} w_2 - \f {\bar{\rho}^+ (L_s) \bar{u}^+ (L_s) }{\bar{P}^+ (L_s) - \bar{P}^- (L_s)} w_2\\
= &
\b( \f {(\bar{\rho}^+ \bar{u}^+) (w_5 + L_s)}{ ( \bar{P}^+ - \bar{P}^-) (w_5 + L_s)} - \f {(\bar{\rho}^+ \bar{u}^+ ) (L_s) }{ (\bar{P}^+ - \bar{P}^- ) (L_s)} \b) w_2 \\
&
- \f {(\bar{\rho}^+ \bar{u}^+) (\xi)}{ ( \bar{P}^+ - \bar{P}^-) (\xi)}
\f {P^+ (\xi) - \bar{P}^+ (\xi) - (P^- (\xi) - \bar{P}^- (\xi))}{P^+ (\xi) - \bar{P}^+ (\xi) + \bar{P}^+ (\xi) - \bar{P}^- (\xi) + \bar{P}^- (\xi) - P^- (\xi)} w_2 \\
&
+ \f { \hat{\rho} \bar{u}^+ (\xi) + \bar{\rho}^+ (\xi) w_1 + \hat{\rho} w_1 }{P^+ (\xi) - \bar{P}^+ (\xi) + \bar{P}^+ (\xi) - \bar{P}^- (\xi) + \bar{P}^- (\xi) - P^- (\xi)} w_2.
\end{aligned}\end{equation*}
Thus \eqref{rh3} can be derived for $i =2, 3$.

It follows from \eqref{1-18} and \eqref{rh1} that
\begin{equation}\label{rh4}
  \begin{cases}
    [\rho u_1] = \f {[\rho u_2] J_2 + [\rho u_3] J_3}{J},  \\
    [\rho u_1^2 + P ] = \f {[\rho u_1 u_2] J_2 + [\rho u_1 u_3] J_3}{J}.
  \end{cases}
\end{equation}
Denote $ w_0 (x_1,x^\prime) =\rho(x_1,x^\prime)-\bar \rho^+(x_1) $. Then \eqref{rh4} implies that at $ (\xi,x^\prime) $, there holds
\begin{equation}\label{rh5}
\begin{cases}
 \bar u^+(L_s)w_0+\bar \rho^+(L_s)w_1=R_{01} (\M\Psi^- (\xi, x') - \bar{\M\Psi}^- (\xi), \M w (\xi, x') , w_5), \\
\left((\bar u^+(L_s))^2+c^2( \bar \rho^+(L_s))\right)w_0+2(\bar \rho^+\bar u^+)(L_s)w_1=-((\bar \rho^+-\bar \rho^-)\bar f)(L_s)w_5\\
\qquad+R_{02} (\M\Psi^- (\xi, x') - \bar{\M\Psi}^- (\xi), \M w (\xi, x'), w_5), \\
\end{cases}
\end{equation}
where
\begin{equation*}
\begin{aligned}
R_{01}& =- [\bar{\rho} \bar{u}] (\xi) + \f {[\rho u_2] J_2 + [\rho u_3] J_3}{J}+(\rho^- u_1^-) (\xi, x^\prime) - (\bar{\rho}^- \bar{u}^-) (\xi)\\
 &\quad-(w_1+\bar u^+(L_s+w_5)-\bar u^+(L_s))w_0(\xi, x^\prime)-(\bar\rho^+(L_s+w_5)-\bar \rho^+(L_s))w_1(\xi, x^\prime), \\
R_{02}& =-\left([\bar{\rho} \bar{u}^2+\bar P] (L_s+w_5)-((\bar \rho^+-\bar \rho^-)\bar f)(L_s)w_5\right)+(\rho^- (u_1^-)^2+ P^-) (\xi, x^\prime) \\
&\quad
- (\bar{\rho}^- (\bar{u}^-)^2+ \bar P^-) (\xi)
-\b((\rho^+ (u_1^+)^2+ P^+) (\xi, x^\prime) - (\bar{\rho}^+ (\bar{u}^+)^2+ \bar P^+) (\xi)\\
&\quad-\left((\bar u^+(L_s))^2+c^2( \bar \rho^+(L_s))\right)w_0+2(\bar \rho^+\bar u^+)(L_s)w_1\b)+\f {[\rho u_1 u_2] J_2 + [\rho u_1 u_3] J_3}{J}.
  \end{aligned}
\end{equation*}
\par Note that
\begin{equation*}
\frac{d}{d x_1}( \bar \rho \bar u)(x_1)=0, \quad \frac{d}{d x_1}(\bar \rho \bar u^2+ \bar P)(x_1)=(\bar \rho\bar f)(x_1).
\end{equation*}
Then
\begin{equation*}
 [\bar \rho \bar u](L_s+w_5)=O(w_5^2),\quad [\bar \rho \bar u^2+ \bar P]
 (L_s+w_5)-((\bar \rho^+-\bar \rho^-)\bar f)(L_s)w_5=O(w_5^2),
 \end{equation*}
and thus
\begin{equation}\no
|R_{0i}| \le C_0 (|\M\Psi^- (\xi , x') - \bar{\M\Psi}^- (\xi  )| + |\M w (\xi, x')|^2 + |w_5 (x' )|^2), \ \ i = 1, 2.
\end{equation}
\par By solving the algebraic equations \eqref{rh5}, one derives
\begin{equation}\label{rh10}
\begin{cases}
  w_0(\xi, x')  = b_0 w_5 (x') + R_0 (\M\Psi^- (\xi, x') - \bar{\M\Psi}^- (\xi), \M\Psi^+ (\xi, x') - \bar{\M\Psi}^+ (L_s), w_5),\\
   w_1 (\xi, x')  = b_1 w_5 (x') + R_1 (\M\Psi^- (\xi, x') - \bar{\M\Psi}^- (\xi), \M\Psi^+ (\xi, x') - \bar{\M\Psi}^+ (L_s), w_5), \\
  \end{cases}
\end{equation}
where
\begin{equation*}
b_0  =  - \f {(\bar\rho^+(L_s)-\bar\rho^-(L_s))\bar{f} (L_s) }
{ c^2 (\bar{\rho}^+ (L_s)) -  (\bar{u}^+ (L_s))^2   }  <0, \quad
b_1  = \f {\bar{u}^+ (L_s)(\bar\rho^+(L_s)-\bar\rho^-(L_s))\bar{f} (L_s) }
{ \bar{\rho}^+ (L_s)( c^2 (\bar{\rho}^+ (L_s)) - (\bar{u}^+ (L_s))^2)  }  >0,
\end{equation*}
and
\begin{equation*}
\begin{aligned}
R_0  = & \f {-2\bar{u}^+ (L_s) R_{01} + R_{02}}{(c^2 (\bar{\rho}^+ (L_s)) - ( \bar{u}^+ (L_s))^2)} \eqq \sum_{i = 1}^2 b_{0i} R_{0i}, \\
R_1  = & \f {\left((\bar u^+(L_s))^2+c^2( \bar \rho^+(L_s))\right) R_{01} - \bar{u}^+ (L_s)R_{02}}
{ \bar{\rho}^+(L_s) ( c^2 (\bar{\rho}^+ (L_s)) - (\bar{u}^+ (L_s))^2)  } \eqq \sum_{i = 1}^2 b_{1i} R_{0i}.
\end{aligned}
\end{equation*}
Next, it follow from the  Bernoulli's function and \eqref{rh10} that
\begin{equation}\no
w_4 (\xi, x')  = b_2 w_5 (x') + R_2 (\M\Psi^- (\xi, x') - \bar{\M\Psi}^- (\xi), \M w (\xi, x'), w_5),
\end{equation}
where
\be\no
b_2  =  \f {(\bar\rho^-(L_s)-\bar\rho^+(L_s))\bar{f} (L_s) }{\bar\rho^+(L_s)}  < 0,
\ee
\begin{equation*}
\begin{aligned}
R_{03} & = (\bar u^+(L_s+w_5)-\bar u^+(L_s))w_1(\xi,x^\prime) -\frac{c^2(\bar\rho^+(L_s)}{\bar\rho^+(L_s)}
w_0(\xi,x^\prime) \\
& \q +
\frac{1}{2}\sum_{i=1}^3w_j^2(\xi,x^\prime) +\frac{\gamma}{\gamma-1}(\rho^+(\xi,x^\prime)^{\gamma - 1}
-(\bar\rho^+(\xi))^{\gamma - 1}) - \eps \Phi_0 (\xi, x'), \\
R_2&= \f {- \bar{u}^+ (L_s) R_{01} + R_{02}}{\bar{\rho}^+ (L_s)} + R_{03} \eqq \sum_{i = 1}^3 b_{2i} R_{0 i}.
\end{aligned}
\end{equation*}
\par In the following,  the superscript ``+" in $\bar{u}^+$, $\bar{P}^+$ and $\bar{B}^+$  will be ignored to simplify the notations. We turn to concern the  boundary conditions at the exit. It follows from the Bernoulli's function that
\begin{equation}\no
w_4 = \bar{u} w_1 + \f {1} {\bar{\rho}} (P - \bar{P}) + \f 12 \sum_{i = 1}^3 w_j^2 + E ({\bf w} (x_1, x')) ,
\end{equation}
where
\begin{equation*}
E ({\bf w} (x_1, x')) = \f {\ga }{\ga - 1} (P ({\bf w}))^{\f {\ga - 1}{\ga }} - \f {\ga }{\ga - 1} \bar{P}^{\f {\ga - 1}{\ga }} - \f 1{\bar{\rho} (x_1)} (P ({\bf w}) - \bar{P}) - \eps \Phi_0 .
\end{equation*}
This, together with \eqref{1-14} yields that
\begin{equation}\label{bc3}
w_1 (L_1, x') = \f {w_4 (L_1, x')}{\bar{u} (L_1)} - \f {\eps P_{ex} (x')}{(\bar{\rho} \bar{u}) (L_1)} - \f {1}{2 \bar{u} (L_1)} \sum_{i = 1}^3 w_i^2 (L_1, x') - \f {E ({\bf w} (L_1, x')} {\bar{u} (L_1)} .
\end{equation}
The boundary conditions of $w_2$ and $w_3$ on the nozzle walls are
\begin{equation}\label{bc15}
\begin{cases}
  w_2 (x_1, \pm 1, x_3) = 0, & \mbox{on } L_s + w_5 < x_1 < L_1, \, x_3 \in [-1,1], \\
  w_3 (x_1, x_2, \pm 1) = 0, & \mbox{on } L_s + w_5 < x_1 < L_1, \, x_2 \in [-1,1].
\end{cases}
\end{equation}

Finally, equations \eqref{euler2}, \eqref{om2}, \eqref{om3} and \eqref{rho2} can be expressed using $w_1, \cdots , w_4$. The equation for $w_4$ is
\begin{equation}\label{euler4}
 \b( \p_1 + \f {w_2}{\bar{u} + w_1} \p_2 + \f {w_3}{\bar{u} + w_1} \p_3 \b) w_4 = 0.
\end{equation}
The equations for the vorticity $\om$ are
\begin{equation}\label{om4}
\begin{aligned}
& \b( \p_1 + \f {w_2}{\bar{u} + w_1} \p_2 + \f {w_3}{\bar{u} + w_1} \p_3 \b) \om_1
+ \b( \p_2 \b( \f {w_2}{\bar{u} + w_1}\b) + \p_3 \b( \f {w_3}{\bar{u} + w_1} \b) \b) \om_1 \\
&
+  \p_2 \b(\f 1{\bar{u} + w_1} \b) \p_3 w_4
 - \p_3 \b( \f 1{\bar{u} + w_1} \b) \p_2 w_4 = 0,
 \end{aligned}
\end{equation}
and
\begin{equation}\label{om5}
    \om_2 = \f {w_2 \om_1 + \p_3 w_4}{\bar{u} + w_1} , \  \ \ \  \om_3 = \f {w_3 \om_1 - \p_2 w_4}{\bar{u} + w_1} .
\end{equation}
The equation for $w_1$ is obtained by the following calculations. Note that
\begin{equation*}
\begin{cases}
 (c^2 (\bar{B}, \bar{u}, \bar{\Phi}) - \bar{u}^2) \bar{u}' (x_1) =- \bar{u}\bar f , \\
 c^2 (B, |\u|, \Phi ) - u_1^2 - c^2 (\bar{B}, \bar{u}, \bar{\Phi}) + \bar{u}^2 =
(\ga - 1) ( w_4 + \eps \Phi_0)  - \f {\ga + 1}{2} w_1^2 - (\ga + 1) \bar{u} w_1
 - \f {\ga -1 }2 \sum_{i = 2}^3 w_i^2.
 \end{cases}
\end{equation*}
Then one can rewrite \eqref{rho2} as
\begin{equation}\label{om6}
 (1 - \bar{M}^2 (x_1)) \p_1 w_1 + \p_2 w_2 + \p_3 w_3 + \f {\bar f - (\ga+1) \bar u \bar u^\prime }{c^2 (\bar{\rho}) } w_1+\f{(\gamma-1)\bar u^\prime}{c^2 (\bar{\rho}) } w_4   =  F ({\bf w}),
\end{equation}
where
\begin{equation*}
\begin{aligned}
\bar{M}^2 (x_1) & =  \f {\bar{u}^2 }{c^2 (\bar{\rho}) } (x_1),\\
F ({\bf w}) =&
 \f {(\ga + 1) \bar{u}}{c^2 (\bar{\rho})} w_1 \p_1 w_1
+ \f {w_1 + \bar{u}}{c^2 (\bar{\rho})}
 (w_2 \p_1 w_2 + w_3 \p_1 w_3 - \eps \p_1 \Phi_0  ) \\
 &
+ \f {w_2}{c^2 (\bar{\rho})} ((w_1 + \bar{u}) \p_2 w_1 + w_3 \p_2 w_3 - \eps \p_2 \Phi_0 ) \\
&
+ \f {w_3}{c^2 (\bar{\rho})} ((w_1 + \bar{u}) \p_3 w_1 + w_2 \p_3 w_2 - \eps \p_3 \Phi_0 )
\\
&
- \f 1{c^2 (\bar{\rho})} \b( \eps (\ga - 1) \Phi_0  - \f {\ga + 1}{2} w_1^2
 - \f {\ga -1 }2 \sum_{i = 2}^3 w_i^2 \b)  \bar{u}' \\
&
- \f 1{c^2 (\bar{\rho})} \b( (\ga - 1) ( w_4 + \eps \Phi_0 ) - \f {\ga + 1}{2} w_1^2
 - \f {\ga -1 }2 \sum_{i = 2}^3 w_i^2 \b) \p_1 w_1 \\
&
- \f 1{c^2 (\bar{\rho})}  \b( (\ga - 1) (\bar{u} w_1 + w_4 + \eps \Phi_0) - \f {\ga - 1}2 (w_1^2 + w_3^2) - \f {\ga + 1}2 w_2^2 \b) \p_2 w_2  \\
&
- \f 1{c^2 (\bar{\rho})}  \b( (\ga - 1) (\bar{u} w_1 + w_4 + \eps \Phi_0) - \f {\ga - 1}2 (w_1^2 + w_2^2) - \f {\ga +1 }2 w_3^2 \b) \p_3 w_3.
\end{aligned}
\end{equation*}

Therefore, solving the problem \eqref{1-1} with \eqref{1-12}-\eqref{1-15} and \eqref{1-18} is equivalent to finding vector functions ${\bf w}$ belonging to $\mc{N}_{w_5} \co \{ (x_1, x_2, x_3) : L_s + w_5 < x_1 < L_1, (x_2, x_3) \in E \}$ and a function $w_5$ belonging to $E$, which satisfy \eqref{euler4}-\eqref{om6} with boundary conditions \eqref{rh2}, \eqref{rh10}, \eqref{bc3} and \eqref{bc15}.

\subsection{Fix the domain and the reformulation of the problem}\noindent
\par To deal with the free boundary value problem, it is convenient
to reduce it into a fixed boundary value problem by setting
\begin{equation}\no
 y_1 = \f {x_1 - \xi }{L_1 - \xi } (L_1 - L_s) + L_s = \f {x_1 - v_5 - L_s}{L_1 - v_5 - L_s} (L_1 - L_s) + L_s, \, y_2 = x_2, \, y_3 = x_3,
\end{equation}
where $ v_5 (x_2, x_3) = \xi (x_2, x_3) - L_s $. Then
\be\no
\begin{cases}
  x_1 = y_1 + \f {L_1 - y_1}{L_1 - L_s} v_5 \eqq   D_0^{v_5}, \quad
  &\p_1 = \f {L_1 - L_s}{L_1 - v_5 - L_s} \p_{y_1} \eqq   D_1^{v_5}, \\
  \p_2 = \p_{y_2} + \f {(y_1 - L_1) \p_{y_2} v_5}{L_1 - v_5 - L_s} \p_{y_1} \eqq D_2^{v_5}, \quad
  &\p_3 = \p_{y_3} + \f {(y_1 - L_1) \p_{y_3} v_5}{L_1 - v_5 - L_s} \p_{y_1} \eqq  D_3^{v_5},
\end{cases}
\ee
and the domain $\mc{N}^+$ becomes
$
\mb{D} = \{ (y_1, y') : y_1 \in (L_s, L_1), y' = (y_2, y_3) \in E \}.
$
Denote
\begin{equation*}
\begin{aligned}
& \Sigma_2^{\pm} = \{ (y_1, \pm1, y_3): (y_1, y_3) \in (L_s, L_1) \times (-1 , 1) \}, \\
&  \Sigma_3^{\pm} = \{ (y_1, y_2, \pm 1): (y_1, y_2) \in (L_s, L_1) \times (-1 , 1) \}.
\end{aligned}
\end{equation*}
\par Set
\begin{equation*}
v_j (y) = w_j (y_1 + \f {L_1 - y_1}{L_1 - L_s} v_5, y_2, y_3 ), \, j = 1, \cdots , 4, \quad
  \tilde{\om}_j (y) = \om_j (y_1 + \f {L_1 - y_1}{L_1 - L_s} v_5, y_2, y_3 ), \, j = 1, 2, 3.
\end{equation*}
Then $\rho (x_1, x_2, x_3)$ and $P (x_1, x_2, x_3)$ in \eqref{rho3} and \eqref{rho4} can be  rewritten as
\be \no
 \tilde{\rho} ({\bf v} (y), v_5) =
\b( \f {\ga -1}{\ga } \b)^{\f 1 {\ga - 1}} \b( v_4 + \bar{B} - \f 12 (v_1 + \bar{u} (D_0^{v_5}))^2 - \f 12 \sum_{j = 2}^3 |v_j|^2 + \eps \Phi_0 + \bar{\Phi} \b)^{\f 1 {\ga -1}},
\ee
\be\no
 \tilde{P} ({\bf v} (y), v_5)
=
\b( \f {\ga -1}{\ga}\b)^{\f {\ga} {\ga - 1}} \b( v_4 + \bar{B} - \f 12 (v_1 + \bar{u} (D_0^{v_5}))^2 - \f 12 \sum_{j = 2}^3 |v_j|^2 + \eps \Phi_0 + \bar{\Phi} \b)^{\f {\ga} {\ga -1}}.
\ee
 Furthermore, after the coordinate transformation, \eqref{rh2} becomes
\be\label{rh11}
&& \p_{y_2} v_5(y') = a_0 v_2 (L_s, y') + g_2 ({\bf v} (L_s, y'), v_5 (y')), \\\label{rh15}
&& \p_{y_3} v_5 (y') = a_0 v_3 (L_s, y') + g_3 ({\bf v} (L_s, y'), v_5 (y')),
\ee
where
\be\label{rh12}
&& g_2 ({\bf v} (L_s, y'), v_5 (y')) =  \f {J_2 ({\bf v} (L_s, y'), v_5 (y'))}{J ({\bf v} (L_s, y'), v_5 (y'))} - a_0 v_2 (L_s, y'),
\\\label{rh13}
&& g_3 ({\bf v} (L_s, y'), v_5 (y')) = \f {J_3 ({\bf v} (L_s, y'), v_5 (y'))}{J ({\bf v} (L_s, y'), v_5 (y'))} - a_0 v_3 (L_s, y'),
\ee
and the exact formulas for $J$, $J_2$, and $J_3$ are given in the Appendix \ref{appendix}, which will be required to obtain the compatibility conditions in the below section.
\par By \eqref{rh10}, the shock front will be determined as follows
\begin{equation}\label{rh14}
v_5 (y') = \f 1{b_1} v_1 (L_s, y') - \f 1 {b_1} R_1 ({\bf v} (L_s, y'), v_5 (y')),
\end{equation}
where $R_1 ({\bf v} (L_s, y'), v_5 (y')) = \sum_{i = 1}^2  b_{1i} R_{0i} ({\bf v} (L_s, y'), v_5 (y'))$ and the exact formulas for $R_{0i}$, $i = 1, 2$ in $y-$coordinates are given in the Appendix \ref{appendix}. We will verify the compatibility conditions (See \eqref{bc26}-\eqref{bc27} below) using these exact formulas.

 The function $v_4$ will be determined by the second equation in \eqref{rh10}.  That is
\begin{equation}\label{euler6}
\begin{cases}
  \b( D_1^{v_5} + \f {v_2}{\bar{u} (D_0^{v_5}) + v_1} D_2^{v_5} + \f {v_3}{\bar{u} (D_0^{v_5}) + v_1} D_3^{v_5} \b) v_4 = 0, \\
  v_4 (L_s, y') =  b_2 v_5 (y') + R_2 ({\bf v} (L_s, y'), v_5 (y')),
\end{cases}
\end{equation}
where $R_2 ({\bf v} (L_s, y'), v_5 (y')) = \sum_{i = 1}^3 b_{2i} R_{0i} ({\bf v} (L_s, y'), v_5 (y'))$ and the exact formulas for $R_{0i}$, $i = 1, 2, 3$ can be found in the Appendix \ref{appendix}.

Note that \eqref{rh11} and \eqref{rh15} are equivalent to
\begin{equation}\no
\begin{cases}
 F_2 (y') = \p_{y_2} v_5 (y') - a_0 v_2 (L_s, y') - g_2 ({\bf v} (L_s, y'), v_5 (y')), \q \forall y' \in E, \\
 F_3 (y') = \p_{y_3} v_5 (y') - a_0 v_3 (L_s, y') - g_3 ({\bf v} (L_s, y'), v_5 (y')), \q \forall y' \in E.
 \end{cases}
\end{equation}
This following reformulation is essential for us to solve the transonic shock problem.
\begin{lemma}\label{lm1}
  Let $F_j$, $j = 2, 3$ be two $C^1$ smooth functions defined on $\overline{E}$. Then the following
two statements are equivalent.
  \begin{enumerate}[(i)]
    \item $F_2 = F_3 \equiv 0 $ on $\overline{E}$;
    \item $F_2$ and $F_3$ solve the following problem
        \begin{equation}\label{euler8}
        \begin{cases}
          \p_{y_2} F_3 - \p_{y_3} F_2 = 0, \q \text{in } E, \\
          \p_{y_2} F_2 + \p_{y_3} F_3 = 0, \q \text{in } E, \\
          F_2 (\pm 1 , y_3)  = 0, \q \text{on } y_3 \in [-1,1], \\
          F_3 (y_2, \pm 1 ) = 0, \q \text{on } y_2 \in [-1,1].
        \end{cases}
        \end{equation}
  \end{enumerate}
\end{lemma}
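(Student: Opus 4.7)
The implication (i)$\Rightarrow$(ii) is immediate: if $F_2 \equiv F_3 \equiv 0$ on $\overline{E}$, then all four conditions in \eqref{euler8} hold trivially. The substance of the lemma lies in the reverse implication, and my plan is to exploit the Cauchy--Riemann structure of the first two equations of \eqref{euler8} together with a mixed Dirichlet--Neumann boundary condition that is \emph{derived} from the CR compatibility applied to the given boundary data.

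First I would observe that \eqref{euler8} is essentially a Cauchy--Riemann system in $(y_2,y_3)$: setting $\mc{F}(z) = F_2(y_2,y_3) - i F_3(y_2,y_3)$ with $z = y_2 + i y_3$, the two PDEs say exactly that $\mc{F}$ is holomorphic in $E$. In particular, cross-differentiating the two identities and subtracting yields $\Delta F_2 = \Delta F_3 = 0$ in $E$.

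Second I would enhance the boundary information, because the data in \eqref{euler8} is asymmetric: only $F_2$ is prescribed on $y_2=\pm 1$ and only $F_3$ on $y_3 = \pm 1$. The CR relations bridge this gap. Along $y_3 = \pm 1$ the identity $F_3 \equiv 0$ implies that the tangential derivative $\p_{y_2} F_3$ vanishes there, and then the first equation of \eqref{euler8} forces $\p_{y_3} F_2 = \p_{y_2} F_3 = 0$, i.e.\ a homogeneous Neumann condition for $F_2$ on $y_3 = \pm 1$. Symmetrically, along $y_2 = \pm 1$ the identity $F_2 \equiv 0$ gives $\p_{y_3} F_2 = 0$, and then the first equation yields $\p_{y_2} F_3 = 0$, a Neumann condition for $F_3$ on $y_2 = \pm 1$.

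Third I would close with a standard energy identity. Multiplying $\Delta F_2 = 0$ by $F_2$, $\Delta F_3 = 0$ by $F_3$, summing, and integrating by parts over $E$, one obtains
\[
\int_E \bigl( |\na F_2|^2 + |\na F_3|^2 \bigr)\, dy_2\, dy_3 \;=\; \oint_{\p E} \bigl( F_2 \p_\nu F_2 + F_3 \p_\nu F_3 \bigr)\, ds.
\]
On each side $y_2 = \pm 1$ the first integrand vanishes because $F_2 = 0$ and the second because $\p_\nu F_3 = \pm \p_{y_2} F_3 = 0$ by the previous step; on $y_3 = \pm 1$ the roles swap. Hence the right-hand side is zero, $F_2$ and $F_3$ are constant in $E$, and the Dirichlet data force both constants to vanish, giving (i).

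I do not anticipate a serious obstacle: the argument is textbook once one recognises the CR perspective. The only mild care needed is the bookkeeping of which boundary condition is prescribed versus derived, and checking that the $C^1(\overline{E})$ regularity assumed in the lemma suffices to legitimize the tangential differentiation of the Dirichlet conditions and the boundary integrations used above.
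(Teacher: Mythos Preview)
The paper does not actually prove this lemma; it is stated and then used without justification. Your argument fills that gap correctly and cleanly. Recognizing the Cauchy--Riemann structure, upgrading the Dirichlet data on two sides to homogeneous Neumann data on the complementary two sides via the CR relations, and then closing with the energy identity is the natural route, and each step is valid. Your regularity remark is on point: $C^1(\overline{E})$ lets you differentiate the Dirichlet conditions tangentially and, by continuity, extends the CR relations to $\overline{E}$; harmonicity in the open square then gives interior smoothness, so Green's formula applies without difficulty. There is nothing to correct.
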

\par The first equation in \eqref{euler8} implies that
\begin{equation}\label{rh18}
\p_{y_3} v_2 (L_s, y') - \p_{y_2} v_3 (L_s, y') = \f 1{a_0} \b( \p_{y_2} \{g_3 ({\bf v} (L_s, y'), v_5 (y')) \}- \p_{y_3} \{g_2 ({\bf v} (L_s, y'), v_5 (y'))\} \b),
\end{equation}
which yields the boundary condition for the first component of the vorticity on the shock front.

The second equation in \eqref{euler8} yields that
\begin{equation}\label{rh19}
\p_{y_2}^2 v_5 (y') + \p_{y_3}^2 v_5 (y') - a_0 \p_{y_2} v_2(L_s, y') - a_0 \p_{y_3} v_3 (L_s, y')
 = \sum_{i=2}^3\p_{y_i}\{g_i ({\bf v} (L_s, y'), v_5(y'))\}.
\end{equation}
This, together with \eqref{rh14}, gives
\begin{equation}\label{rh20}
\p_{y_2}^2 v_1 (L_s, y') + \p_{y_3}^2 v_1 (L_s, y') - a_0 b_1 \p_{y_2} v_2 (L_s, y') - a_0 b_1 \p_{y_3} v_3 (L_s, y') = q_1 ({\bf v} (L_s, y'), v_5 (y')),
\end{equation}
where
\begin{equation*}
\begin{aligned}
q_1 ({\bf v} (L_s, y'), v_5 (y'))=b_1 \sum\limits_{i=2}^3\p_{y_i} \{g_i ({\bf v} (L_s, y'), v_5 (y'))\} +\sum\limits_{i=2}^3\p_{y_i}^2 \{R_1 ({\bf v} (L_s, y'), v_5 (y'))\}.
\end{aligned}
\end{equation*}
The equation \eqref{rh20} is the boundary condition for the deformation-curl system associated with the velocity field on the shock front.

We can express the boundary conditions in \eqref{euler8} as
\be\label{bc4}
 (\p_{y_2} v_1 - a_0 b_1 v_2) (L_s, \pm 1, y_3 ) = q_2^{\pm} ({\bf v} (L_s, \pm 1, y_3 ), v_5 (\pm 1 , y_3)), \q \forall y_3 \in [-1,1], \\\label{bc17}
 (\p_{y_3} v_1 - a_0 b_1 v_3) (L_s,  y_2, \pm 1 ) = q_3^{\pm} ({\bf v} (L_s,  y_2, \pm 1 ), v_5 (y_2, \pm 1)), \q \forall y_2 \in [-1,1],
\ee
with
\begin{equation*}
\begin{aligned}
& q_2^{\pm} ({\bf v} (L_s, \pm 1 , y_3 ), v_5 (\pm 1 , y_3))
= \p_{y_2} \{R_1 ({\bf v} (L_s, \cdot ), v_5 (\cdot))\} (\pm 1 , y_3) + b_1 g_2 ({\bf v} (L_s, \cdot ), v_5 (\cdot)) (\pm 1 , y_3), \\
&
q_3^{\pm} ({\bf v} (L_s, \pm 1, y_3 ), v_5 (\pm 1, y_3))
= \p_{y_3}\{ R_1 ({\bf v} (L_s, \cdot ), v_5 (\cdot))\} ( \pm 1, y_3) + b_1 g_3 ({\bf v} (L_s, \cdot ), v_5 (\cdot)) ( \pm 1, y_3).
\end{aligned}\end{equation*}

The vorticity can be determined as follows. \eqref{om4} is changed to be
\be\label{om7}
\b( D_1^{v_5} + \sum_{i=2}^3\f {v_i}{\bar{u} (D_0^{v_5}) + v_1} D_i^{v_5} \b) \tilde{\om}_1 + \sum_{i=2}^3\b( D^{v_5}_i \b( \f {v_i}{\bar{u} (D_0^{v_5}) + v_1}\b) \b)\om_1= H_0 ({\bf v}, v_5),
\ee
with
\begin{equation*}
 H_0 ({\bf v}, v_5) = D^{v_5}_3 \b( \f 1{\bar{u} (D_0^{v_5}) + v_1} \b) D^{v_5}_2 v_4
-  D^{v_5}_2 \b(\f 1{\bar{u} (D_0^{v_5}) + v_1} \b) D^{v_5}_3 v_4 .
\end{equation*}
The boundary data for $\tilde{\om}_1$ is given by \eqref{rh18} at $y_1 = L_s$, for $\forall y' \in E $,
\begin{equation}\label{bc6}
\tilde{\om}_1 (L_s, y') = \f 1{a_0} \b( \p_{y_2} \{g_3 ({\bf v} (L_s, y'), v_5 (y'))\} - \p_{y_3} \{g_2 ({\bf v} (L_s, y'), v_5 (y'))\} \b) + g_4 ({\bf v} (L_s, y'), v_5 (y')) ,
\end{equation}
where
\begin{equation}\label{bc7}
g_4 ({\bf v} (L_s, y'), v_5 (y'))
=
\f {(y_1 - L_1) \p_{y_2} v_5 }{L_1 - v_5  - L_s} \p_{y_1}  v_3 (L_s, y')
-  \f {(y_1 - L_1) \p_{y_3} v_5 }{L_1 - v_5 - L_s} \p_{y_1} v_2 (L_s, y').
\end{equation}
 Next, the equation \eqref{rh19} gives
\be\no
&& \tilde{\om}_2 = D_3^{v_5 } v_1 - D_1^{v_5} v_3  =
\f {v_2 \tilde{\om}_1 + D^{v_5}_3 v_4}{\bar{u}(D_0^{v_5}) + v_1}
, \\\no
&& \tilde{\om}_3 = D_1^{v_5} v_2 - D_2^{v_5} v_1  =
\f {v_3 \tilde{\om}_1 - D^{v_5}_2 v_4}{\bar{u} (D_0^{v_5}) + v_1}.
\ee
Then
\be\label{euler9}
&& \p_{y_2} v_3 - \p_{y_3} v_2 = \tilde{\om}_1 + H_1 ({\bf v}, v_5), \\\label{euler10}
&& \p_{y_3} v_1 - \p_{y_1} v_3 - \f 1{\bar{u} (y_1)} \p_{y_3} v_4
=
\f {v_2 \tilde{\om}_1 }{\bar{u}(D_0^{v_5}) + v_1} + H_2 ({\bf v}, v_5) , \\\label{euler11}
&& \p_{y_1} v_2 - \p_{y_2} v_1 + \f 1{\bar{u} (y_1)} \p_{y_2} v_4
= \f {v_3 \tilde{\om}_1 }{\bar{u} (D_0^{v_5}) + v_1} + H_3 ({\bf v}, v_5),
\ee
where
\begin{equation*}\begin{aligned}
H_1 ({\bf v}, v_5) =&
\f {(y_1 - L_1) (\p_{y_3} v_5 \p_{y_1} v_2 - \p_{y_2} v_5 \p_{y_1} v_3) }{L_1 - v_5 - L_s}, \\
H_2 ({\bf v}, v_5) =&
- \f {(y_1 - L_1) \p_{y_3} v_5 \p_{y_1} v_1}{L_1 - v_5 - L_s}
+
\f {v_5 \p_{y_1} v_3}{L_1 - v_5 - L_s}
+ \b( \f 1 {\bar{u} (D_0^{v_5}) + v_1} - \f 1{\bar{u} (y_1)} \b) \p_{y_3} v_4\\
& +
\f {(y_1 - L_1) \p_{y_3} v_5}{(\bar{u} (D_0^{v_5}) + v_1) (L_1 - v_5 - L_s )} \p_{y_1} v_4, \\
H_3 ({\bf v}, v_5) =&
- \f {(y_1 - L_1) \p_{y_2} v_5 \p_{y_1} v_1}{L_1 - v_5 - L_s}
-
\f {v_5 \p_{y_1} v_2}{L_1 - v_5 - L_s}
- \b( \f 1 {\bar{u} (D_0^{v_5}) + v_1} - \f 1{\bar{u} (y_1)} \b) \p_{y_2} v_4 \\
& - \f {(y_1 - L_1) \p_{y_2} v_5}{(\bar{u} (D_0^{v_5}) + v_1) (L_1 - v_5 - L_s )} \p_{y_1} v_4.
\end{aligned}\end{equation*}
The boundary condition \eqref{1-13} can be rewritten on $\Sigma_2^{\pm}$ and $\Sigma_3^{\pm}$ as
\begin{equation}\label{bc8}
\begin{cases}
v_2 (y_1, \pm 1, y_3) = 0, & \text{on } \Sigma_2^{\pm}, \\
   v_3 (y_1, y_2, \pm 1) = 0, & \text{on } \Sigma_3^{\pm}.
\end{cases}
\end{equation}
\par Moreover, the equation \eqref{om6} becomes
\begin{equation}\label{euler12}
d_1 (y_1) \p_{y_1} v_1 + \p_{y_2} v_2 + \p_{y_3} v_3 + d_2 (y_1) v_1+ d_3 (y_1) v_4 = \mc{G}_0 ({\bf v}, {v_5}),
\end{equation}
with
\begin{equation*}\begin{aligned}
 d_1 (y_1) = & 1 - \bar{M}^2 (y_1), \q
d_2 (y_1) = \f { \bar{f} (y_1)-(\ga+1)\bar u\bar u^\prime(y_1)}{c^2 (\bar{\rho}(y_1)) }, \q d_3 (y_1) = \f {(\ga - 1) \bar{u}' (y_1)}{c^2 (\bar{\rho}(y_1)) },  \\
 \mc{G}_0 ({\bf v}, {v_5}) = & \mc{F} ({\bf v}, {v_5}) - (d_1 (D_0^{v_5}) D_1^{v_5} v_1 - d_1 (y_1) \p_{y_1} v_1) - (D_2^{v_5} v_2 - \p_{y_2} v_2) \\
&  - (D_3^{v_5} v_3 - \p_{y_3} v_3) - (d_2 (D_0^{v_5}) v_1 - d_2 (y_1) v_1)- (d_3 (D_0^{v_5}) v_4  - d_3 (y_1) v_4),
\end{aligned}\end{equation*}
\begin{equation*}\begin{aligned}
 \mc{F} ({\bf v}, {v_5})
 = &
  \f {(\ga + 1) \bar{u} (D_0^{v_5})}{c^2 (D_0^{v_5})} v_1 D^{v_5}_1 v_1
+ \f {v_1 + \bar{u}}{c^2 (D_0^{v_5})}
 (v_2 D^{v_5}_1 v_2 + v_3 D^{v_5}_1 v_3 - \eps D^{v_5}_1 \Phi_0  ) \\
 &
+ \f {v_2}{c^2 (D_0^{v_5})} ((v_1 + \bar{u} (D_0^{v_5})) D^{v_5}_2 v_1 + v_3 D^{v_5}_2 v_3 - \eps D^{v_5}_2 \Phi_0 ) \\
&
+ \f {v_3}{c^2 (D_0^{v_5})} ((v_1 + \bar{u} (D_0^{v_5})) D^{v_5}_3 v_1 + v_2 D^{v_5}_3 v_2 - \eps D^{v_5}_3 \Phi_0 )
\\
&
- \f 1{c^2 (D_0^{v_5})} \b( \eps (\ga - 1)   \Phi_0   - \f {\ga + 1}{2} v_1^2
 - \f {\ga -1 }2 \sum_{i = 2}^3 v_i^2 \b)  \bar{u}' (D_0^{v_5})\\
&
- \f 1{c^2 (D_0^{v_5})} \b( (\ga - 1) ( v_4 + \eps \Phi_0)   - \f {\ga + 1}{2} v_1^2
 - \f {\ga -1 }2 \sum_{i = 2}^3 v_i^2 \b) D^{v_5}_1 v_1  \\
&
- \f 1{c^2 (D_0^{v_5})}  \b( (\ga - 1) (\bar{u}(D_0^{v_5}) v_1 + v_4 + \eps \Phi_0) - \f {\ga - 1}2 (v_1^2 + v_3^2) - \f {\ga + 1}2 v_2^2 \b) D^{v_5}_2 v_2  \\
&
- \f 1{c^2 (D_0^{v_5})}  \b( (\ga - 1) (\bar{u}(D_0^{v_5}) v_1 + v_4 + \eps \Phi_0) - \f {\ga - 1}2 (v_1^2 + v_2^2) - \f {\ga +1 }2 v_3^2 \b) D^{v_5}_3 v_3.
\end{aligned}
\end{equation*}

Finally, the boundary condition \eqref{bc3} at the exit becomes
\begin{equation}\label{bc9}
 v_1 (L_1, y')
- \f {v_4 (L_1, y')}{\bar{u} (L_1)} = - \f {\eps P_{ex} (y')}{(\bar{\rho} \bar{u})(L_1)}
- \f 1{2 \bar{u} (L_1)} \sum_{i = 1}^3 v_i^2 (L_1, y') - \f 1{\bar{u} (L_1)} E ({\bf w} (L_1, y')),
\end{equation}
where
\begin{equation}\label{bc10}
  E ({\bf v} (L_1, y')) = \f {\ga }{\ga - 1}  (P ({\bf v}) (L_1, y'))^{\f {\ga - 1}{\ga }} - \f {\ga }{\ga - 1} \bar{P}^{\f {\ga - 1}{\ga }}  - \f 1{\bar{\rho} (D_0^{v_5})} (P ({\bf v}) (L_1, y') - \bar{P })  - \eps \Phi_0.
\end{equation}

Hence, finding the solution for the system \eqref{1-1} with \eqref{1-12}-\eqref{1-15} and \eqref{1-18} is equivalent to solve the following problem.

\textbf{Problem TS.} Find a function $v_5$ defined on $E$ and vector function $(v_1, \cdots, v_4)$ defined on the $\mb{D}$, which solve the equations \eqref{euler6}, \eqref{om7}, \eqref{euler9}-\eqref{euler11} and \eqref{euler12} with the boundary conditions \eqref{rh20}-\eqref{bc17}, \eqref{bc6}, \eqref{bc8} and \eqref{bc9}.

\begin{theorem}\label{thm1}
  Assume that the compatibility conditions \eqref{bc13} and \eqref{1-15} hold. There exists a small constant $\eps_0 > 0$ depending only on the background solution $\bar{\M\Psi }$ and the boundary data $u_{1,0}^-$, $u_{2, 0}^-$, $u_{3, 0}^-$, $P_0^-$, $P_e$ such that if $0 \le \eps < \eps_0$, the problem \eqref{euler6}, \eqref{om7}, \eqref{euler9}-\eqref{euler11}, \eqref{euler12} with the boundary conditions \eqref{rh20}-\eqref{bc17}, \eqref{bc6}, \eqref{bc8} and \eqref{bc9} has a unique solution $(v_1, v_2, v_3, v_4) (y)$ with the shock front $\mc{S}: y_1 = v_5 (y')$ satisfying the following properties.
  \begin{enumerate}[(i)]
    \item The function $v_5 (y') \in C^{3, \al} (\overline{E})$ satisfies
    \begin{equation}\no
    \| v_5 (y')  \|_{C^{3, \al} (\overline{E})} \le C_* \eps,
    \end{equation}
    and
    \begin{equation}\no
    \begin{cases}
       \p_{y_2} v_5 (\pm 1, y_3) = \p_{y_2}^3 v_5 (\pm 1, y_3) = 0 , & \forall y_3 \in [-1 ,1], \\
       \p_{y_3} v_5 (y_2, \pm 1) = \p_{y_3}^3 v_5 (y_2, \pm 1) = 0 , & \forall y_2 \in [-1,1],
    \end{cases}
    \end{equation}
    where $C_*$ is a positive constant depending only on the background solution, the supersonic incoming flow and the exit pressure.
    \item The solution $( v_1, v_2, v_3, v_4) (y) \in C^{2, \al} (\overline{\mb{D}})$ satisfies the estimate
        \begin{equation}\no
        \sum_{i = 1}^4 \| v_i \|_{C^{2, \al} (\overline{\mb{D}})} \le C_* \eps,
        \end{equation}
        and the compatibility conditions
        \begin{equation}\no
        \begin{cases}
           (v_2, \p_{y_2}^2 v_2) (y_1, \pm 1, y_3) = \p_{y_2} (v_1, v_3, v_4) (y_1, \pm 1, y_3)  = 0 , & \mbox{on } \Sigma_2^{\pm}, \\
           (v_3, \p_{y_3}^2 v_3) (y_1, y_2, \pm 1) = \p_{y_3} (v_1, v_2, v_4) (y_1, y_2, \pm 1)  = 0 , & \mbox{on } \Sigma_3^{\pm}.
        \end{cases}
        \end{equation}
  \end{enumerate}
\end{theorem}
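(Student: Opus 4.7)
The plan is to prove Theorem \ref{thm1} by a nonlinear Picard iteration in the closed ball
\begin{equation*}
\mc{X}_\eps = \Big\{ (\hat{v}_1, \ldots, \hat{v}_4, \hat{v}_5) : \sum_{i=1}^4 \|\hat{v}_i\|_{C^{2,\al}(\ol{\mb{D}})} + \|\hat{v}_5\|_{C^{3,\al}(\ol{E})} \le C_* \eps \Big\}
\end{equation*}
endowed with a weaker $C^{1,\al} \times C^{2,\al}$ norm for the contraction step. Starting from any element of $\mc{X}_\eps$, I freeze all nonlinear and nonlocal terms and construct the next iterate by solving a decoupled sequence of linear subproblems; choosing $C_*$ and $\eps_0$ appropriately will make this map a self-map and a contraction, so Banach's theorem yields the unique solution.

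The first subproblem is hyperbolic: solve the transport equation \eqref{euler6} for $v_4$ with Cauchy data $v_4(L_s, y') = b_2 \hat{v}_5 + R_2(\hat{\M v}(L_s,\cdot), \hat{v}_5)$ on the shock front, and similarly the transport equation \eqref{om7} for $\tilde{\om}_1$ with the data \eqref{bc6}. These characteristic ODEs are globally solvable since $\bar{u}(D_0^{\hat{v}_5}) + \hat{v}_1 > 0$ for small $\eps$, and yield $C^{2,\al}$ bounds with norms of size $O(\eps)$. Once $v_4$ and $\tilde{\om}_1$ are known, \eqref{euler9}--\eqref{euler11} together with \eqref{euler12} become a linear deformation--curl system for $(v_1, v_2, v_3)$ with prescribed right-hand sides.

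The second subproblem is the heart of the argument. Following the strategy announced in the introduction, I would first subtract an explicit curl-matching correction $(v_1^\ast, v_2^\ast, v_3^\ast)$ so that the residual is curl-free; writing the residual as $\na \phi$ and substituting into \eqref{euler12} reduces the system to a single second-order equation
\begin{equation*}
d_1(y_1) \p_{y_1}^2 \phi + \p_{y_2}^2 \phi + \p_{y_3}^2 \phi + d_2(y_1) \p_{y_1}\phi = (\text{known source}),
\end{equation*}
which is uniformly elliptic on $\mb{D}$ because $d_1 = 1 - \bar{M}^2 > 0$ in the subsonic region. The slip conditions \eqref{bc8} produce homogeneous Neumann data on $\Sigma_2^\pm \cup \Sigma_3^\pm$, and \eqref{bc9} gives an oblique condition at $y_1 = L_1$. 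The main obstacle will be the condition \eqref{rh20} on the shock front: after using the deformation equation \eqref{euler12} to eliminate $\p_{y_2} v_2 + \p_{y_3} v_3$, it takes the schematic form
\begin{equation*}
(\p_{y_2}^2 + \p_{y_3}^2) v_1 + a_0 b_1 d_1(L_s) \p_{y_1} v_1 = (\text{lower order}) \q \text{at } y_1 = L_s,
\end{equation*}
which, combined with the algebraic relation \eqref{rh14} that ties $v_1(L_s, \cdot)$ to $v_5$, becomes a nonlocal boundary condition on the trace $\phi(L_s, \cdot)$. I plan to establish unique solvability by a Lax--Milgram or Fredholm-type argument, exploiting the favorable signs $b_0 < 0$ and $b_1 > 0$ guaranteed by $\bar{f} > 0$ to derive the required coercivity; this is precisely where the stabilization effect of the external force enters the estimates.

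Once $(v_1, \ldots, v_4)$ are produced, $v_5$ is updated through \eqref{rh14}, which automatically gains one derivative so that $v_5 \in C^{3,\al}(\ol{E})$ when $v_1 \in C^{2,\al}(\ol{\mb{D}})$. The edge compatibility conditions $\p_{y_j} v_5 = \p_{y_j}^3 v_5 = 0$ on $\{y_j = \pm 1\}$ can be verified from the explicit formulas for $R_1$ collected in the Appendix together with the assumed compatibility \eqref{bc13}, \eqref{bc14} and \eqref{1-15}. Because each subproblem depends linearly on the frozen input with Lipschitz constant $O(\eps)$, standard Schauder and energy estimates close both the self-map and the contraction arguments in the weaker norm, delivering the desired fixed point. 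I expect the most delicate technical issue to be the construction of the curl-matching correction that also respects the slip conditions at the corners of $\mb{D}$, together with the proof of well-posedness for the nonlocal second-order boundary condition on the shock front, including compatibility at the edges $\{y_1 = L_s\} \cap \Sigma_j^\pm$.
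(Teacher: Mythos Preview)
Your overall strategy matches the paper's: Picard iteration in a $C^{2,\al}\times C^{3,\al}$ ball, transport equations for $v_4$ and $\tilde\om_1$ first, then a deformation--curl system reduced to a second-order elliptic equation for a potential $\phi$ with a nonlocal term involving $\phi(L_s,\cdot)$, solved via Lax--Milgram plus Fredholm, contraction in the weaker $C^{1,\al}\times C^{2,\al}$ norm. Two points in your write-up, however, are not correct as stated and would block the argument.

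First, the assertion that updating $v_5$ through \eqref{rh14} ``automatically gains one derivative'' is false: \eqref{rh14} reads $v_5=\f1{b_1}v_1(L_s,y')-\f1{b_1}R_1$, so $v_1\in C^{2,\al}(\overline{\mb{D}})$ gives only $v_5\in C^{2,\al}(\overline{E})$, not $C^{3,\al}$. The paper obtains the extra derivative by a different mechanism: the second-order boundary condition \eqref{rh20}, together with the edge conditions \eqref{bc4}--\eqref{bc17}, is designed so that Lemma~\ref{lm1} forces the \emph{first-order} relations $\p_{y_j}v_5=a_0 v_j(L_s,y')+g_j$ to hold (equations \eqref{pf36}). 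Since $v_2,v_3\in C^{2,\al}$ and $g_2,g_3\in C^{2,\al}$, this yields $\na_{y'}v_5\in C^{2,\al}$ and hence $v_5\in C^{3,\al}(\overline{E})$; the third-order edge compatibility $\p_{y_j}^3 v_5=0$ also follows from differentiating these first-order relations. Without this step your iteration does not return to the $C^{3,\al}$ class for $v_5$, and the estimate \eqref{est4} for $v_4$ (which uses $\|\hat v_5\|_{C^{3,\al}}$) does not close.

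Second, after linearization the curl source $(G_1,G_2,G_3)(\hat{\bf v},\hat v_5)$ is in general \emph{not} divergence-free, so the ``curl-matching correction $(v_1^\ast,v_2^\ast,v_3^\ast)$'' you propose cannot be solved directly. The paper handles this by enlarging the system with an auxiliary scalar $\Pi$ subject to homogeneous Dirichlet data (see \eqref{pf6}); $\Pi$ absorbs the divergence defect, one solves first the Poisson problem \eqref{pf7} for $\Pi$, then a genuine div--curl system \eqref{pf8} with divergence-free source, and only afterward passes to the potential $\phi$. At the fixed point one verifies $\Pi\equiv0$ by the maximum principle. Your sketch needs an equivalent device here. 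Relatedly, the nonlocality in the resulting elliptic problem \eqref{pf13} appears in the \emph{equation} itself (the term $a_0 b_1 d_5(y_1)\phi(L_s,y')$ coming from the dependence of $v_4$ on $v_1(L_s,\cdot)$ via \eqref{pf24}), not only in the shock-front boundary condition; the signs that drive uniqueness are $d_5<0$ and $b_4=a_0b_1b_3>0$, used through a Fourier expansion plus maximum-principle argument rather than direct coercivity.
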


\section{Iteration scheme and the proof of Theorem \ref{thm1}}\label{iteration}\noindent
\par In this section, we design an iteration to prove Theorem \ref{thm1}. Define the solution class $\mc{V}$ consists of vectors $(v_1, \cdots, v_4, v_5) \in (C^{2, \al} (\overline{\mb{D}}))^4 \times C^{3, \al} (\overline{E})$ satisfying
\begin{equation}\no
\| ({\bf v}, v_5) \|_{\mc{V}} \co \sum_{i = 1}^4 \| v_i \|_{C^{2, \al} (\overline{\mb{D}})} + \| v_5 \|_{C^{3, \al} (\overline{E})} \le \de_0,
\end{equation}
and the compatibility conditions
\begin{equation}\label{bc20}
\begin{cases}
    (v_2, \p_{y_2}^2 v_2) (y_1, \pm 1, y_3) = \p_{y_2} (v_1, v_3, v_4) (y_1, \pm 1, y_3)  = 0 , & \mbox{on } \Sigma_2^{\pm},\\
   (v_3, \p_{y_3}^2 v_3) (y_1, y_2, \pm 1) = \p_{y_3} (v_1, v_2, v_4) (y_1, y_2, \pm 1)  = 0 , & \mbox{on } \Sigma_3^{\pm},\\
    (\p_{y_2} v_5, \p_{y_2}^3 v_5) (\pm 1, y_3) = 0, & \mbox{on } y_3 \in [-1 , 1], \\
   (\p_{y_3} v_5, \p_{y_3}^3 v_5) (y_2, \pm 1) = 0, & \mbox{on } y_2 \in [-1, 1].
\end{cases}
\end{equation}

Given any $( \hat{{\bf v}}, \hat{v}_5) \in \mc{V} $, we construct an iterative procedure that generates a new $({\bf v}, v_5) \in \mc{V}$, and thus we define a mapping $\mc{T}$ from $\mc{V}$ to itself by choosing a small positive constant $\de_0$. We first solve the transport equation for $v_4$ to obtain its expression. Similarly, we can solve the first component of the vorticity and get its compatibility conditions in terms of the trace $ v_1(L_s,y^\prime)$. Then we enlarge the deformation-curl system by introducing an additional unknown function with additional boundary conditions, and use the Lax-Milgram theorem and the fourier series expansion to obtain the existence and uniqueness of the velocity field. This enables us to uniquely determine $v_4$ and $v_5$.

\textbf{Step 1.}
Assume that $v_1 (L_s, y')$ is obtained, the shock front $v_5$ can be uniquely determined by the following algebraic equation
\begin{equation}\label{pf1}
v_5 (y') = \f 1{b_1} v_1 (L_s, y') - \f 1{b_1} R_1 (\hat{{\bf v}} (L_s, y'), \hat{v}_5).
\end{equation}

\textbf{Step 2.} Solving the transport equation for the Bernoulli's quantity.
\begin{equation}\label{pf2}
\begin{cases}
  \b( D_1^{\hat{v}_5} + \f {\hat{v}_2}{\bar{u} (D_0^{\hat{v}_5}) + \hat{v}_1} D_2^{\hat{v}_5} + \f {\hat{v}_3}{\bar{u} (D_0^{\hat{v}_5}) + \hat{v}_1} D_3^{\hat{v}_5} \b) v_4 = 0, \\
  v_4 (L_s, y') = b_2 v_5 (y') + R_2 (\hat{{\bf v}} (L_s, y'), \hat{v}_5).
\end{cases}
\end{equation}
Set
\be\label{I2}
I_2 (y) \co \f { (L_1 - v_5 - L_s) \hat{v}_2}{(L_1 - L_s + (y_1 - L_1) (\p_{y_2} v_5 + \p_{y_3} v_5)) ( \bar{u} (D_0^{\hat{v}_5}) + v_1 )} , \\\label{I3}
I_3 (y) \co \f { (L_1 - v_5 - L_s ) \hat{v}_3}{(L_1 - L_s + (y_1 - L_1) (\p_{y_2} v_5 + \p_{y_3} v_5)) ( \bar{u} (D_0^{\hat{v}_5}) + \hat{v}_1) }.
\ee
Then $I_2, I_3 \in C^{2, \al } (\overline{E})$ for any $({\bf \hat{v}}, \hat{v}_5) \in \mc{V}$. The function $v_4$ is conserved along the trajectory defined by the following ordinary differential equations
\begin{equation}\label{ode1}
\begin{cases}
  \f {d \bar{y}_2 (\tau; y)}{d \tau} = I_2 (\tau, \bar{y}_2 (\tau; y), \bar{y}_3 (\tau; y) ), & \forall \tau \in [L_s, L_1], \\
  \f {d \bar{y}_3 (\tau; y)}{d \tau} = I_3 (\tau, \bar{y}_2 (\tau; y), \bar{y}_3 (\tau; y) ), & \forall \tau \in [L_s, L_1], \\
  \bar{y}_2 (y_1; y) = y_2, \bar{y}_3 (y_1; y) = y_3.  &
\end{cases}
\end{equation}
Denote $(\beta_2 (y), \beta_3 (y)) = (\bar{y}_2 (L_s; y), \bar{y}_3 (L_s; y))$. Since $({\bf \hat{v}}, \hat{v}_5) \in \mc{V}$, it follows from \eqref{bc20} that
\begin{equation}\label{bc21}
\begin{cases}
I_2 (y_1, \pm 1, y_3) = \p_{y_2} I_3 (y_1, \pm 1, y_3) = 0, & \mbox{on } \ \Sigma_2^{\pm}, \\
   I_3 (y_1, y_2, \pm 1) = \p_{y_3} I_2 (y_1, y_2, \pm 1) = 0, & \mbox{on } \ \Sigma_3^{\pm}.
\end{cases}
\end{equation}
Due to the uniqueness of the solution of \eqref{ode1}-\eqref{bc21}, one obtains
\begin{equation}\label{bc22}
\begin{cases}
  \bar{y}_2 (\tau; y_1, \pm 1, y_3) = \pm 1, & \forall \tau \in [L_s, L_1], \, (y_1, y_3) \in \Sigma_2^{\pm}, \\
  \bar{y}_3 (\tau; y_1, y_2, \pm 1) = \pm 1, & \forall \tau \in [L_s, L_1], \, (y_1, y_2) \in \Sigma_3^{\pm},
\end{cases}
\end{equation}
and
\begin{equation}\label{bc23}
\begin{cases}
\beta_2 (y_1, \pm 1, y_3) = \pm 1, & \forall (y_1, y_3) \in \Sigma_2^{\pm }, \\
  \beta_3 (y_1, y_2, \pm 1) = \pm 1, & \forall (y_1, y_2) \in \Sigma_3^{\pm }.
\end{cases}
\end{equation}
The standard theory of systems of ordinary differential equations and \eqref{bc22} guarantee the existence and uniqueness of $(\bar{y}_2 (\tau;y), \bar{y}_3 (\tau; y))$ on the whole interval $[L_s, L_1]$. Then \eqref{ode1} implies
\begin{equation*}\begin{aligned}
& y_2 - \beta_2 (y) = \int_{L_s}^{y_1} I_2 (\tau, \bar{y}_2 (\tau; y), \bar{y}_3 (\tau; y) ) d \tau, \\
& \de_{2j} - \p_{y_j} \beta_2 (y) = \de_{1j} I_2 (y) + \int_{L_s}^{y_1} \p_{y_2} I_2 \p_{y_j} \bar{y}_2 (\tau; y) + \p_{y_3} I_2 \p_{y_j} \bar{y}_3 (\tau; y) d \tau , \, j = 1, 2, 3, \\
&
- \p_{y_i y_j}^2 \beta_2 (y) = \de_{1j} \p_{y_i} I_2 (y) + \int_{L_s}^{y_1}
\p_{y_2}^2 I_2 \p_{y_i} \bar{y}_2 (\tau; y) \p_{y_j} \bar{y}_2 (\tau; y)
+ \p_{y_3}^2 I_2 \p_{y_i} \bar{y}_3 (\tau; y) \p_{y_j} \bar{y}_3 (\tau; y) \\
& \quad\quad
+ \p_{y_2 y_3}^2 I_2 ( \p_{y_j} \bar{y}_2 (\tau; y) \p_{y_i} \bar{y}_3 (\tau; y) + \p_{y_i } \bar{y}_2 (\tau; y) \p_{y_j} \bar{y}_3 (\tau; y) )
+ \p_{y_2} I_2 \p_{y_i y_j}^2 \bar{y}_2 (\tau; y)
+ \p_{y_3} I_2 \p_{y_i y_j}^2 \bar{y}_3 (\tau; y) d \tau .
\end{aligned}\end{equation*}
Hence, there holds
\begin{equation}\no
\sum_{i = 2}^3 \| \beta_i (y) - y_i \|_{C^{2, \al } (\overline{\mb{D}})} \le C_* \| ({\bf \hat{v}}, \hat{v}_5) \|_{\mc{V}}.
\end{equation}

Differentiating $\eqref{ode1}_1$ with respect to $y_3$ and restricting the resulting equation on $y_3 = 0$ give
\begin{equation*}
\begin{cases}
  \f {d}{d \tau} \p_{y_3} \bar{y}_2 (\tau; y_1, y_2, \pm 1) = \p_{y_2} K_2 (\tau, \bar{y}_2 (\tau; y), \bar{y}_3 (\tau; y)) \p_{y_3} \bar{y}_2 (\tau; y_1, y_2, \pm 1), & \\
  \p_{y_3} \bar{y}_2 (y_1; y_1, y_2, \pm 1) = 0.
\end{cases}
\end{equation*}
Then $\p_{y_3} \bar{y}_2 (\tau; y_1, y_2, \pm 1) = 0$ for any $\tau \in [L_s, L_1]$. Similarly, one has $ \p_{y_2} \bar{y}_3 (\tau; y_1, \pm 1, y_3) = 0$ for any $\tau \in [L_s, L_1]$. Therefore,
\begin{equation}\label{bc24}
\begin{cases}
\p_{y_3} \beta_2 (y_1, y_2, \pm 1) = 0 , & \mbox{on } \Sigma_3^{\pm}, \\
  \p_{y_2} \beta_3 (y_1, \pm 1, y_3) = 0 , & \mbox{on } \Sigma_2^{\pm}.
\end{cases}
\end{equation}

By the characteristic method and \eqref{pf1}, one has
\begin{equation}\label{pf23}
\begin{aligned}
v_4 (y) & =  v_4 (L_s, \beta_2 (y), \beta_3 (y))\\
& =  b_2 v_5 (\beta_2 (y), \beta_3 (y)) + R_2 ({\bf \hat{v}} (L_s, \beta_2 (y), \beta_3 (y)), \hat{v}_5 (L_s, \beta_2 (y), \beta_3 (y))) \\
& = b_2 v_5 (y') + b_2 (v_5 (\beta_2 (y), \beta_3 (y)) - v_5 (y') ) + R_2 ({\bf \hat{v}} (L_s, \beta_2 (y), \beta_3 (y)), \hat{v}_5 (L_s, \beta_2 (y), \beta_3 (y))) \\
& = \f {b_2}{b_1} v_1 (L_s, y') + b_2 (v_5 (\beta_2 (y), \beta_3 (y)) - v_5 (y') ) + R_3 ({\bf \hat{v}} (L_s, \beta_2 (y), \beta_3 (y)), \hat{v}_5 (\beta_2 (y), \beta_3 (y)) ),
\end{aligned}
\end{equation}
where
\begin{equation*}
R_3 ({\bf \hat{v}} (L_s, y'), \hat{v}_5 ) = - \f {b_2}{b_1} R_1 ({\bf \hat{v}} (L_s, y'), \hat{v}_5 ) + R_2 ({\bf \hat{v}} (L_s, y'), \hat{v}_5 ).
\end{equation*}
Since $v_5 (y')$ is still unknown, \eqref{pf23} can be rewritten as
\begin{equation}\label{pf24}
v_4 (y_1, y') = \f {b_2}{b_1} v_1 (L_s, y') + R_4 ({\bf \hat{v}} (L_s, \beta_2 (y), \beta_3 (y)), \hat{v}_5 (\beta_2 (y), \beta_3 (y))),
\end{equation}
with
\begin{equation*}
R_4  = b_2 (\hat{v}_5 (\beta_2 (y), \beta_3 (y)) - \hat{v}_5 (y') ) + R_3 ({\bf \hat{v}} (L_s, \beta_2 (y), \beta_3 (y)), \hat{v}_5 (\beta_2 (y), \beta_3 (y)) ).
\end{equation*}
Therefore,
\begin{equation}\label{est4}
\begin{aligned}
& \| v_4 \|_{C^{2, \al} (\overline{\mb{D}})}
\le C_* ( \| v_1 (L_s, \cdot ) \|_{C^{2, \al } (\overline{E})} + \| R_4 \|_{C^{2, \al} (\overline{\mb{D}})} ) \\
&\le C_* ( \| v_1 (L_s, \cdot ) \|_{C^{2, \al } (\overline{E})}
+ \| \hat{v}_5 \|_{C^{3, \al } (\overline{E})} \sum_{i = 2}^3 \| \beta_i (y) - y_i \|_{C^{2, \al } (\overline{E})}) + C_* ( \epsilon \| ({\bf \hat{v}}, \hat{v}_5) \|_{\mc{V}} + \| ({\bf \hat{v}}, \hat{v}_5) \|_{\mc{V}}^2 ) \\
& \le C_* \| v_1 (L_s, \cdot ) \|_{C^{2, \al } (\overline{E})}
+ C_* (\eps \de_0 + \de_0^2).
\end{aligned}
\end{equation}

Since $({\bf \hat{v}}, \hat{v}_5) \in \mc{V}$ satisfies \eqref{bc20} and the incoming supersonic flow satisfies \eqref{bc14}, there holds
\begin{equation}\label{bc25}
\begin{cases}
   J_2 ({\bf \hat{v}} (L_s, y'), \hat{v}_5) |_{y_2 = \pm 1} = \p_{y_2}^2 J_2 ({\bf \hat{v}} (L_s, y'), \hat{v}_5) |_{y_2 = \pm 1} =0, & \forall y_3 \in [-1 , 1],  \\
   \p_{y_2} J_3 ({\bf \hat{v}} (L_s, y'), \hat{v}_5) |_{y_2 = \pm 1} = \p_{y_2} J ({\bf \hat{v}} (L_s, y'), \hat{v}_5) |_{y_2 = \pm 1}  =0,& \forall y_3 \in [-1 , 1], \\
   J_3 ({\bf \hat{v}} (L_s, y'), \hat{v}_5) |_{y_3 = \pm 1} = \p_{y_3}^2 J_3 ({\bf \hat{v}} (L_s, y'), \hat{v}_5) |_{y_3 = \pm 1}  =0, & \forall y_2 \in [-1 , 1],  \\
   \p_{y_3} J_2 ({\bf \hat{v}} (L_s, y'), \hat{v}_5) |_{y_3 = \pm 1} = \p_{y_3} J ({\bf \hat{v}} (L_s, y'), \hat{v}_5) |_{y_3 = \pm 1} = 0  & \forall y_2 \in [-1 , 1],
\end{cases}
\end{equation}
and for all $j = 1, 2$,
\begin{equation}\label{bc26}\begin{cases}
 \p_{y_2} \{R_{0j} ({\bf \hat{v}} (L_s, y'), \hat{v}_5)\} |_{y_2 = \pm 1} = 0 , & \forall y_3 \in [-1 , 1], \\
 \p_{y_3} \{R_{0j} ({\bf \hat{v}} (L_s, y'), \hat{v}_5)\} |_{y_3 = \pm 1} = 0 , & \forall y_2 \in [-1 , 1].
\end{cases}
\end{equation}
Thus, for $i = 1, 2$,
\begin{equation}\label{bc27}
\begin{cases}
\p_{y_2}\{ R_i ({\bf \hat{v}} (L_s, y'), \hat{v}_5)\} |_{y_2 = \pm 1} = 0 , & \forall y_3 \in [-1, 1], \\
\p_{y_3} \{R_i ({\bf \hat{v}} (L_s, y'), \hat{v}_5)\} |_{y_3 = \pm 1} = 0 , &\forall y_2 \in [-1, 1].
\end{cases}
\end{equation}
Together with \eqref{bc23} and \eqref{bc24}, one can conclude that
\begin{equation}\label{bc28}
\begin{cases}
   \p_{y_2}\{ R_4 ({\bf \hat{v}} (L_s, \beta_2 (y), \beta_3 (y)), \hat{v}_5 (\beta_2 (y), \beta_3 (y)))\} (y_1, \pm 1, y_3) = 0 , & \mbox{on } \Sigma_2^{\pm}, \\
  \p_{y_3} \{R_4 ({\bf \hat{v}} (L_s, \beta_2 (y), \beta_3 (y)), \hat{v}_5 (\beta_2 (y), \beta_3 (y)))\} (y_1, y_2, \pm 1) = 0 , & \mbox{on } \Sigma_3^{\pm},
\end{cases}
\end{equation}
and
\begin{equation}\no
\begin{cases}
  \p_{y_2} v_4 (y_1, \pm 1 , y_3) = \f {b_2}{b_1} \p_{y_2} v_1 (L_s, \pm 1, y_3), & \mbox{on } \Sigma_2^{\pm},  \\
  \p_{y_3} v_4 (y_1, y_2 , \pm 1) = \f {b_2}{b_1} \p_{y_2} v_1 (L_s, y_2 , \pm 1), & \mbox{on } \Sigma_3^{\pm}.
\end{cases}
\end{equation}

\textbf{Step 3.} Solving the transport equation of the first component of the vorticity. Thanks to \eqref{om7} and \eqref{bc6}, we just need to consider the following equation
\begin{equation}\label{pf5}
\begin{cases}
  \b( D_1^{\hat{v}_5} + \f {\hat{v}_2}{\bar{u} (D_0^{\hat{v}_5}) + \hat{v}_1} D_2^{\hat{v}_5} + \f {\hat{v}_3}{\bar{u}(D_0^{\hat{v}_5}) + \hat{v}_1} D_3^{\hat{v}_5} \b) \tilde{\om}_1  + \mu ({\bf \hat{v}} , \hat{v}_5) \tilde{\om}_1
 = H_0 ({\bf \hat{v}}, \hat{v}_5), &  \\
  \tilde{\om}_1 (L_s, y') = R_6 ({\bf \hat{v}} (L_s, y'), \hat{v}_5 (y')) , &
\end{cases}
\end{equation}
where
\begin{equation*}\begin{aligned}
& \mu ({\bf \hat{v}} (y), \hat{v}_5 (y'))= D^{\hat{v}_5}_2 \b( \f {\hat{v}_2}{\bar{u} (D_0^{\hat{v}_5}) + \hat{v}_1}\b) + D^{\hat{v}_5}_3 \b( \f {\hat{v}_3}{\bar{u} (D_0^{\hat{v}_5}) + \hat{v}_1} \b), \\
& R_6 ({\bf \hat{v}} (L_s, y'), \hat{v}_5 (y')) = \f 1{a_0} \b( \p_{y_2} g_3 ({\bf \hat{v}} (L_s, y'), \hat{v}_5(y')) - \p_{y_3} g_2 ({\bf \hat{v}} (L_s, y'), \hat{v}_5(y')) \b) + g_4 ({\bf \hat{v}} (L_s, y'), \hat{v}_5(y')).
\end{aligned}\end{equation*}
Since $({\bf \hat{v}}, \hat{v}_5) \in \mc{V}$ satisfies \eqref{bc20}, combining with \eqref{rh11}, \eqref{rh12} and \eqref{bc7}, there holds
\begin{equation}\label{bc30}
\begin{cases}
  \tilde{\om}_1 (L_s, \pm 1, y_3) = 0, & \forall y_3 \in [-1 ,1], \\
  \tilde{\om}_1 (L_s, y_2, \pm 1) = 0, & \forall y_2 \in [-1 , 1], \\
  H_0 ({\bf \hat{v}}, \hat{v}_5) (y_1, \pm 1, y_3) = 0, & \mbox{on } \Sigma_2^{\pm}, \\
  H_0 ({\bf \hat{v}}, \hat{v}_5) (y_1, y_2, \pm 1) = 0, & \mbox{on } \Sigma_3^{\pm}.
\end{cases}
\end{equation}

Integrating the equation \eqref{pf5} along the trajectory $(\tau, \bar{y}_2 (\tau;y), \bar{y}_3 (\tau;y))$ yields
\begin{equation}\label{om10}
\begin{aligned}
 \tilde{\om}_1 (y) &= R_6 (\beta_2 (y), \beta_3 (y)) e^{- \int_{L_s}^{y_1} \mu ({\bf \hat{v}}, \hat{v}_5) (t; \bar{y}_2 (t;y), \bar{y}_3 (t;y)) dt} \\
& \q +
\int_{L_s}^{y_1} H_0 ({\bf \hat{v}}, \hat{v}_5) (\tau ; \bar{y}_2 (\tau;y), \bar{y}_3 (\tau;y)) e^{- \int_{\tau }^{y_1} \mu ({\bf \hat{v}}, \hat{v}_5) (t; \bar{y}_2 (t;y), \bar{y}_3 (t;y)) dt } d\tau.
\end{aligned}
\end{equation}
Thus,
\begin{equation}\no
\begin{aligned}
\| \tilde{\om}_1 \|_{C^{1, \al} (\overline{\mb{D}})} &\le C_* (\| \tilde{\om}_1 (L_s, \cdot ) \|_{C^{1, \al} (\overline{\mb{D}})} + \| H_0 ({\bf \hat{v}}, \hat{v}_5) \|_{C^{1, \al} (\overline{\mb{D}})} ) \\
& \le
C_* (\eps \| ({\bf \hat{v}}, \hat{v}_5) \|_{\mc{V}} + \| ({\bf \hat{v}}, \hat{v}_5) \|_{\mc{V}}^2 )
\le C_* (\eps \de_0 + \de_0^2).
\end{aligned}
\end{equation}
Moreover, using \eqref{bc23}, \eqref{bc24}, \eqref{bc30} and \eqref{om10},
 the following compatibility conditions hold
\begin{equation}\label{bc31}
\begin{cases}
 \tilde{\om}_1 (y_1, \pm 1, y_3) = 0, & \mbox{on } \Sigma_2^{\pm}, \\
  \tilde{\om}_1 (y_1, y_2, \pm 1) = 0, & \mbox{on } \Sigma_3^{\pm}.
 \end{cases}
\end{equation}
Substituting \eqref{pf24} and \eqref{om10} into \eqref{euler9}-\eqref{euler11} yields
\be\label{euler13}
&& \p_{y_2} v_3 - \p_{y_3} v_2 = G_1 ({\bf \hat{v}}, \hat{v}_5), \\\label{euler14}
&& \p_{y_3} v_1 - \p_{y_1} v_3 - d_4 (y_1) \p_{y_3} v_1 (L_s, y') = G_2 ({\bf \hat{v}}, \hat{v}_5), \\\label{euler15}
&& \p_{y_1} v_2 - \p_{y_2} v_1 + d_4 (y_1) \p_{y_2} v_1 (L_s, y') = G_3 ({\bf \hat{v}}, \hat{v}_5),
\ee
where
\begin{equation*}\begin{aligned}
& d_4 (y_1) = \f {b_2}{b_1 \bar{u} (y_1)} , \\
& G_1 ({\bf \hat{v}}, \hat{v}_5) = \tilde{\om}_1 + H_1 ({\bf \hat{v}}, \hat{v}_5), \\
&G_2 ({\bf \hat{v}}, \hat{v}_5) = \f {\hat{v}_2 \tilde{\om }_1}{\bar{u} (D_0^{\hat{v}_5}) + \hat{v}_1} + H_2 ({\bf \hat{v}}, \hat{v}_5) + \f 1{\bar{u} (y_1)} \p_{y_3} R_4 ({\bf \hat{v}}, \hat{v}_5), \\
& G_3 ({\bf \hat{v}}, \hat{v}_5) = \f {\hat{v}_3 \tilde{\om }_1}{\bar{u} (D_0^{\hat{v}_5}) + \hat{v}_1} + H_3 ({\bf \hat{v}}, \hat{v}_5) - \f 1{\bar{u} (y_1)} \p_{y_2} R_4 ({\bf \hat{v}}, \hat{v}_5).
\end{aligned}\end{equation*}
The equations \eqref{bc20}, \eqref{bc28} and \eqref{bc31} yield that
\begin{equation}\label{bc32}
\begin{cases}
   G_1 ({\bf \hat{v}}, \hat{v}_5)|_{y_2 = \pm 1} = G_3 ({\bf \hat{v}}, \hat{v}_5)|_{y_2 = \pm 1} =\p_{y_2} G_2 ({\bf \hat{v}}, \hat{v}_5)|_{y_2 = \pm 1} =0, & \mbox{on } \Sigma_2^{\pm},  \\
   G_1 ({\bf \hat{v}}, \hat{v}_5)|_{y_3 = \pm 1} = G_2 ({\bf \hat{v}}, \hat{v}_5)|_{y_3 = \pm 1} = \p_{y_3} G_3 ({\bf \hat{v}}, \hat{v}_5)|_{y_3 = \pm 1} = 0,& \mbox{on } \Sigma_3^{\pm}.
\end{cases}
\end{equation}

Furthermore, \eqref{euler12} implies that
\begin{equation}\label{euler16}
d_1 (y_1) \p_{y_1} v_1 + \p_{y_2} v_2 + \p_{y_3} v_3 + d_2 (y_1) v_1+ \f {b_2}{b_1} d_3 (y_1) v_1 (L_s, y') = G_0 ({\bf \hat{v}}, {\hat{v}_5}),
\end{equation}
where
\begin{equation*}
 G_0 ({\bf \hat{v}}, {\hat{v}_5})= \mc{G}_0 ({\bf \hat{v}}, {\hat{v}_5})-  \f {b_2}{b_1} R_4 ({\bf \hat{v}} (L_s, \beta_2 (y), \beta_3 (y)), \hat{v}_5 (\beta_2 (y), \beta_3 (y))).
\end{equation*}
It follows from \eqref{bc9} and \eqref{pf24} that
\begin{equation}\label{bc12}
v_1 (L_1, y') - d_4 (L_1) v_1 (L_s, y') = q_4 (y'),
\end{equation}
with
\begin{equation*}\begin{aligned}
q_4 (y') =&  \f {b_1}{b_2} d_4 (L_1) R_4 ({\bf \hat{v}} (L_s, \beta_2 (y), \beta_3 (y)), \hat{v}_5 (\beta_2 (y), \beta_3 (y))) - \f {\eps P_{ex} (y')}{(\bar{\rho} \bar{u})(L_1)} \\
&
- \f 1{2 \bar{u} (L_1)} \sum_{i = 1}^3 v_i^2 (L_1, y') - \f 1{\bar{u} (L_1)} E ({\bf w} (L_1, y')).
\end{aligned}\end{equation*}
Using \eqref{1-15} and \eqref{bc10}, one can further verify that
\begin{equation}\no
\begin{cases}
  \p_{y_2} q_4 (\pm 1, y_3) = 0, & \forall y_3 \in [-1 , 1], \\
 \p_{y_3} q_4 (y_2, \pm 1) = 0, & \forall y_2 \in [-1 ,1].
\end{cases}
\end{equation}

\textbf{Step 4.} We have established a deformation-curl system for the velocity field, which is composed of equations \eqref{euler13}-\eqref{euler15}, \eqref{euler16} with the boundary consitions \eqref{rh20}-\eqref{bc17}, \eqref{bc8} and \eqref{bc12}, where $q_1$ and $q_i^{\pm} (i = 2, 3)$ are evaluated at $({\bf \hat{v}}, \hat{v}_5)$. However, due to the linearization, the vector field $(G_1, G_2, G_3) ({\bf \hat{v}}, \hat{v}_5)$ may not be divergence-free and hence the solvability condition of the curl system \eqref{euler13}-\eqref{euler15} may not hold in general. To overcome this difficulty, we first consider the following enlarged deformation-curl system, which involves an additional new unknown function $\Pi$ with homogeneous Dirichlet boundary condition for $\Pi$.
\begin{equation}\label{pf6}
\begin{cases}
  d_1 (y_1) \p_{y_1} v_1 + \p_{y_2} v_2 + \p_{y_3} v_3 + d_2 (y_1) v_1+ \f {b_2}{b_1} d_3 (y_1) v_1 (L_s, y') = G_0 ({\bf \hat{v}}, {\hat{v}_5}), & \text{in } \mb{D}, \\
  \p_{y_2} v_3 - \p_{y_3} v_2 + \p_{y_1} \Pi = G_1 ({\bf \hat{v}}, \hat{v}_5), & \text{in } \mb{D}, \\
  \p_{y_3} v_1 - \p_{y_1} v_3 - d_4 (y_1) \p_{y_3} v_1 (L_s, y') + \p_{y_2} \Pi = G_2 ({\bf \hat{v}}, \hat{v}_5), & \text{in } \mb{D}, \\
  \p_{y_1} v_2 - \p_{y_2} v_1 + d_4 (y_1) \p_{y_2} v_1 (L_s, y') + \p_{y_3} \Pi = G_3 ({\bf \hat{v}}, \hat{v}_5), & \text{in } \mb{D}, \\
  (\p_{y_2}^2 + \p_{y_3}^2) v_1 (L_s, y') = a_0 b_1 \p_{y_2} v_2 (L_s, y') + a_0 b_1 \p_{y_3} v_3 (L_s, y') + q_1 ({\bf \hat{v}} (L_s, y'), \hat{v}_5(y')), & \forall y' \in E, \\
  v_2 (y_1, \pm 1 , y_3) = \Pi (y_1, \pm 1, y_3) = 0, & \text{on } \Sigma_2^{\pm}, \\
  v_3 (y_1, y_2, \pm 1) = \Pi (y_1, y_2, \pm 1) = 0, & \text{on } \Sigma_3^{\pm}, \\
  v_1 (L_1, y') - d_4 (L_1) v_1 (L_s, y') = q_4 (y'), & \forall y' \in E, \\
  \Pi (L_s, y') = \Pi (L_1, y') = 0, & \forall y' \in E.
\end{cases}
\end{equation}
The system \eqref{pf6} should be supplemented with the boundary conditions \eqref{bc4}-\eqref{bc17} where $q_i^{\pm} (i = 2, 3)$ are evaluated at $({\bf \hat{v}}, \hat{v}_5)$ such that there is a unique solvability result. On the intersection of the shock front with the nozzle wall, there holds
\be\label{bc34}
(\p_{y_2} v_1 - a_0 b_1 v_2) (L_s, \pm 1, y_3) = 0, \, \forall y_3 \in [-1, 1], \\\label{bc35}
(\p_{y_3} v_1 - a_0 b_1 v_3) (L_s, y_2, \pm 1) = 0, \, \forall y_2 \in [-1, 1],
\ee
which follow from \eqref{rh12}-\eqref{rh13}, \eqref{bc4}-\eqref{bc17} \eqref{bc20} \eqref{bc25}, and \eqref{bc27}.

We will apply Duhamel's principle to show the unique solvability of the problem \eqref{pf6} with \eqref{bc34}-\eqref{bc35} in the following steps.

\textbf{Step 4.1 } First, by taking the divergence operator to the second, third and fourth equations in \eqref{pf6}, one gets
\begin{equation}\label{pf7}
\begin{cases}
  \p_{y_1}^2 \Pi + \p_{y_2}^2 \Pi + \p_{y_3}^2 \Pi = \p_{y_1} G_1 + \p_{y_2} G_2 + \p_{y_3} G_3, & \text{in}\ \ \mb{D},  \\
  \Pi (L_s, y') = \Pi (L_1, y') = 0, & \forall y' \in E, \\
  \Pi (y_1, \pm 1, y_3) = 0, & \text{on } \Sigma_2^{\pm}, \\
  \Pi (y_1, y_2, \pm 1) = 0, & \text{on } \Sigma_3^{\pm}.
\end{cases}\end{equation}
\par By the standard elliptic theory in \cite{GT82}, the system \eqref{pf7} has a unique solution $\Pi\in C^{2, \al} (\mb{D}) \cap C^{1, \al } (\overline{\mb{D}})$. To deal with the regularity near the corner, we use the the standard symmetric extension technique to extend the functions $\Pi$, $G_1$, $G_2$,$G_3$ as follows:
\begin{equation}\no
(\tilde{\Pi }, \tilde{G}_1, \tilde{G}_2) (y) =
\begin{cases}
  - (\Pi, G_1, G_2) (y_1, y_2, 2 - y_3), & y \in [L_s, L_1] \times [-1, 1] \times [1, 3],  \\
  (\Pi, G_1, G_2) (y_1, y_2, y_3), & y \in [L_s, L_1] \times [-1 , 1] \times [-1 , 1], \\
  - (\Pi, G_1, G_2) (y_1, y_2, -2 - y_3 ), & y \in [L_s, L_1] \times [-1 , 1] \times [-3, -1 ],
\end{cases}
\end{equation}
and
\begin{equation}\label{ext2}
\tilde{G}_3 (y) =
\begin{cases}
   G_3 (y_1, y_2, 2 - y_3), & y \in [L_s, L_1] \times [-1, 1] \times [1, 3],  \\
   G_3 (y_1, y_2, y_3), & y \in [L_s, L_1] \times [-1, 1] \times [-1, 1], \\
   G_3 (y_1, y_2, -2 - y_3 ), & y \in [L_s, L_1] \times [-1, 1] \times [-3, -1].
\end{cases}
\end{equation}
The extension of $\Pi $, $G_i$, $i = 1, 2, 3$ along the $x_2$ direction can be done similarly.

\par Then it follows from \eqref{bc32} that
\begin{equation}\no
\begin{cases}
    \p_{y_1}^2 \tilde{\Pi} + \p_{y_2}^2 \tilde{\Pi} + \p_{y_3}^2 \tilde{\Pi} = \p_{y_1} \tilde{G}_1 + \p_{y_2} \tilde{G}_2 + \p_{y_3} \tilde{G}_3 \in C^{1, \al} (\mb{D}_e), & \\
    \tilde{\Pi} (L_s, y_2, y_3) = \tilde{\Pi} (L_1, y_2, y_3) =0, \, \forall (y_2, y_3) \in [-3, 3] \times [-3, 3],
\end{cases}\end{equation}
where $\mb{D}_e = (L_s, L_1) \times (-3, 3) \times (-3, 3)$. Therefore, one can improve the regularity of $\Pi$ on $\mb{D}$ to be $C^{2, \al} (\overline{\mb{D}})$ with the estimate
\begin{equation}\no
\| \Pi \|_{C^{2, \al} (\overline{\mb{D}})} \le C_* \sum_{i = 1}^3 \| G_i \|_{C^{1, \al} (\overline{\mb{D}})}
\le C_* (\eps \| ({\bf \hat{v}}, \hat{v}_5) \|_{\mc{V}} + \| ({\bf \hat{v}}, \hat{v}_5) \|_{\mc{V}}^2 )
\le C_* (\eps \de_0 + \de_0^2).
\end{equation}
In addition,  the following compatibility conditions hold
\begin{equation}\label{bc36}
\begin{cases}
   \p_{y_1} \Pi (y_1, \pm 1, y_3)  = \p_{y_3} \Pi (y_1, \pm 1, y_3) = 0, & \mbox{on } \Sigma_2^{\pm}, \\
   \p_{y_1} \Pi (y_1, y_2, \pm 1)  = \p_{y_2} \Pi (y_1, y_2, \pm 1) = 0, & \mbox{on } \Sigma_3^{\pm}.
\end{cases}
\end{equation}

\textbf{Step 4.2 } Next we solve the divergence-curl system with normal boundary conditions
\begin{equation}\label{pf8}
\begin{cases}
  \p_{y_1} \tilde{v}_1 + \p_{y_2} \tilde{v}_2 + \p_{y_3} \tilde{v}_3 = 0, & \text{in } \mb{D}, \\
  \p_{y_2} \tilde{v}_3 - \p_{y_3} \tilde{v}_2 = G_1 ({\bf \hat{v}}, \hat{v}_5) - \p_{y_1} \Pi \co \tilde{G}_1, & \text{in } \mb{D}, \\
  \p_{y_3} \tilde{V}_1 - \p_{y_1} \tilde{v}_3 = G_2 ({\bf \hat{v}}, \hat{v}_5) - \p_{y_2} \Pi \co \tilde{G}_2, & \text{in } \mb{D}, \\
  \p_{y_1} \tilde{V}_2 - \p_{y_2} \tilde{v}_1 = G_3 ({\bf \hat{v}}, \hat{v}_5) - \p_{y_3} \Pi \co \tilde{G}_3, & \text{in } \mb{D}, \\
  \tilde{v}_1 (L_s, y_2, y_3) = \tilde{v}_1 (L_1, y_2, y_3) = 0, & \forall y' \in E, \\
   \tilde{v}_2 (y_1, \pm 1, y_3) = 0, & \text{on } \Sigma_2^{\pm}, \\
   \tilde{v}_3 (y_1, y_2, \pm 1) = 0, & \text{on } \Sigma_3^{\pm}.
\end{cases}
\end{equation}

According to \eqref{pf7}, one has
\begin{equation}\no
\p_{y_1} \tilde{G}_1 + \p_{y_2} \tilde{G}_2 + \p_{y_3} \tilde{G}_3 \equiv 0 , \q \text{in } \mb{D}.
\end{equation}
Also it follows from \eqref{bc32} and \eqref{bc36} that
\begin{equation}\label{bc37}
\begin{cases}
  \tilde{G}_1 (y_1, \pm 1, y_3) = \tilde{G}_3 (y_1, \pm 1, y_3) = \p_{y_2} \tilde{G}_2 (y_1, \pm 1 , y_3) = 0, & \mbox{on } \Sigma_2^{\pm}, \\
  \tilde{G}_1 (y_1, y_2, \pm 1) = \tilde{G}_2 (y_1, y_2, \pm 1)= \p_{y_3} \tilde{G}_3 (y_1, y_2, \pm 1) =0,& \mbox{on } \Sigma_3^{\pm}.
\end{cases}
\end{equation}

By the theory in \cite{KY09}, the divergence-curl system with the homogeneous boundary conditions is uniquely solvable. Thanks to the compatibility condition \eqref{bc37} and the symmetric extension technique as above, there exists a unique $C^{2, \al } (\overline{\mb{D}})$ solution to \eqref{pf8} with
\begin{equation}\no
\begin{aligned}
& \sum_{i = 1}^3 \| \tilde{v}_i \|_{C^{2, \al } (\overline{\mb{D}})}
\le C_* \sum_{i = 1}^3 \| \tilde{G}_i \|_{C^{1, \al } (\overline{\mb{D}})}
\le C_* \b( \sum_{i = 1}^3 \| G_i \|_{C^{1, \al } (\overline{\mb{D}})} + \| \Pi \|_{C^{2, \al } (\overline{\mb{D}})} \b)
 \\
& \le C_* \sum_{i = 1}^3 \| G_i \|_{C^{1, \al } (\overline{\mb{D}})} \le C_* (\eps \| ({\bf \hat{v}}, \hat{v}_5) \|_{\mc{V}} + \| ({\bf \hat{v}}, \hat{v}_5) \|_{\mc{V}}^2 )
\le C_* (\eps \de_0 + \de_0^2),
\end{aligned}\end{equation}
and the following compatibility conditions
\begin{equation}\label{bc38}
\begin{cases}
   (\tilde{v}_2, \p_{y_2}^2 \tilde{v}_2) (y_1, \pm 1, y_3) = \p_{y_2} (\tilde{v}_1, \tilde{v}_3) (y_1, \pm 1, y_3)  =0, & \mbox{on } \Sigma_2^{\pm}, \\
   (\tilde{v}_3, \p_{y_3}^2 \tilde{v}_3) (y_1, y_2, \pm 1) = \p_{y_3} (\tilde{v}_1, \tilde{v}_2) (y_1, y_2, \pm 1) =0,& \mbox{on } \Sigma_3^{\pm}.
\end{cases}
\end{equation}

\textbf{Step 4.3 } Let $(v_1, v_2, v_3)$ be the solution to \eqref{pf6}, and define
\begin{equation*}
N_j (y) = v_j (y) - \tilde{v}_j (y), \q j = 1, 2, 3.
\end{equation*}
Then $N_j$, $j = 1, 2, 3$ solve the following system
\begin{equation}\label{pf9}
\begin{cases}
  d_1 (y_1) \p_{y_1} N_1 + \p_{y_2} N_2 + \p_{y_3} N_3 + d_2 (y_1) N_1+\f {b_2}{b_1} d_3 (y_1) N_1 (L_s, y')= G_4 ({\bf \hat{v}}, \hat{v}_5), &  \text{in } \mb{D}, \\
  \p_{y_2} N_3 - \p_{y_3} N_2 = 0, & \text{in } \mb{D}, \\
  \p_{y_3} (N_1 - d_4 (y_1) N_1 (L_s, y') ) - \p_{y_1} N_3 = 0, & \text{in } \mb{D}, \\
  \p_{y_1} N_2 - \p_{y_1} (N_1 -d_4 (y_1) N_1 (L_s, y')) = 0, & \text{in } \mb{D}, \\
  (\p_{y_2}^2 + \p_{y_3}^2) N_1 (L_s, y') - a_0 b_1 (\p_{y_2} + \p_{y_3}) N_1 (L_s, y') = q_5 ({\bf \hat{v}} (L_s, y'), \hat{v}_5 (y')), & \forall y' \in E, \\
  N_2 (y_1, \pm 1, y_3) = 0, & \text{on } \Sigma_2^{\pm}, \\
  N_3 (y_1, y_2, \pm 1) = 0, & \text{on } \Sigma_3^{\pm}, \\
  N_1 (L_1, y') - d_4 (L_1) N_1 (L_1, y') = q_4 ({\bf \hat{v}} (L_s, y'), \hat{v}_5 (y')), & \forall y' \in E,
\end{cases}
\end{equation}
where
\begin{equation*}\begin{aligned}
& G_4 ({\bf \hat{v}}, \hat{v}_5) = G_0 ({\bf \hat{v}}, \hat{v}_5) + \bar{M}^2 (y_1) \p_{y_1} \tilde{v}_1 - d_2 (y_1) \tilde{v}_1-\f {b_2}{b_1} d_3 (y_1) \tilde{v}_1 (L_s, y'), \\
& q_5 ({\bf \hat{v}} (L_s, y'), \hat{v}_5 (y')) = q_1 ({\bf \hat{v}} (L_s, y'), \hat{v}_5 (y')) + a_0 b_1 (\p_{y_2} \tilde{v}_2 + \p_{y_3} \tilde{v}_3) (L_s, y').
\end{aligned}\end{equation*}
It follows from the boundary conditions \eqref{bc34}-\eqref{bc35} and the compatibility condition \eqref{bc38} that
\be\label{bc39}
&&  ( \p_{y_2} N_1 - a_0 b_1 N_2 ) (L_s, \pm 1, y_3) = 0 , \q \forall y_3 \in [-1 , 1], \\\label{bc40}
&& (\p_{y_3} N_1 - a_0 b_1 N_3) (L_s, y_2, \pm 1) = 0, \q \forall y_2 \in [-1, 1].
\ee

 \par By the second, third and fourth equations in \eqref{pf9}, there exists a potential function $\phi $ such that
\begin{equation*}
N_1 (y_1, y') - d_4 (y_1) N_1 (L_s, y') = \p_{y_1} \phi, \quad N_2 (y_1, y') = \p_{y_2} \phi, \quad N_3 (y_1, y') = \p_{y_3} \phi.
\end{equation*}
Therefore
\begin{equation*}
 N_1 (L_s, y') = \f 1{b_3} \p_{y_1} \phi (L_s, y'), \quad
 N_1 (y_1, y') = \p_{y_1} \phi (y_1, y') + \f {d_4 (y_1)}{b_3} \p_{y_1} \phi (L_s, y'), \quad
b_3 = 1 - d_4 (L_s) > 0.
\end{equation*}
Then it follows from \eqref{pf9} that $\phi$ satisfies the following equations
\begin{equation}\label{pf10}
\begin{cases}
  d_1 (y_1) \p_{y_1}^2 \phi + \p_{y_2}^2 \phi + \p_{y_3}^2 \phi + d_2 (y_1) \p_{y_1} \phi + \f 1 {b_3} d_5 (y_1) \p_{y_1} \phi (L_s, y' ) = G_4 ({\bf \hat{v}}, \hat{v}_5), & \text{in } \mb{D}, \\
  (\p_{y_2}^2 + \p_{y_3}^2 ) (\p_{y_1} \phi (L_s, y') - b_4 \phi (L_s, y')) = b_3 q_5 ({\bf \hat{v}} (L_s, y'), \hat{v}_5 (y')), & \forall y' \in E, \\
  \p_{y_2} \phi (y_1, \pm 1, y_3) = 0, & \text{on } \Sigma_2^{\pm}, \\
  \p_{y_3} \phi (y_1, y_2, \pm 1) = 0, & \text{on } \Sigma_3^{\pm}, \\
  \p_{y_1} \phi (L_1, y') = q_4 ({\bf \hat{v}} (L_s, y'), \hat{v}_5 (y')), & \forall y' \in E,
\end{cases}
\end{equation}
where
\begin{equation*}
\begin{aligned}
 d_5 (y_1)  &=  d_1 (y_1) d_4' (y_1) + d_2 (y_1) d_4 (y_1)+ \f {b_2 d_3 (y_1)}{b_1} \\
&=  - (1 - \bar{M}^2 (y_1)) \f {b_2 \bar{u}' (y_1)}{b_1 \bar{u}^2 (y_1)} + \f { \bar{f} (y_1)-( \ga+1) \bar{u}\bar u^\prime(y_1)}{c^2 (\bar{\rho})} \f {b_2}{b_1 \bar{u} (y_1)} +\f {(\gamma-1) b_2  \bar{u}' (y_1)}{b_1  c^2 (\bar{\rho}(y_1))}\\
& = \f {2b_2 \bar{f}}{b_1 \bar{u} (y_1) (c^2 (\bar{\rho}) - \bar{u}^2)}
 < 0, \\
 b_4 &= a_0 b_1 b_3 > 0 .
 \end{aligned}
\end{equation*}

We can rewrite the boundary conditions \eqref{bc39}-\eqref{bc40} as
\be\label{bc41}
&&  \p_{y_2} (\p_{y_1} \phi - b_4 \phi ) (L_s, \pm 1, y_3) = 0, \q \forall y_3 \in [-1 , 1], \\\label{bc42}
&& \p_{y_3} (\p_{y_1} \phi - b_4 \phi ) (L_s, y_2, \pm 1 ) = 0, \q \forall y_2 \in [-1, 1].
\ee

To solve the problem \eqref{pf10} with \eqref{bc41}-\eqref{bc42}, we need the following Lemma.
\begin{lemma}
  One the shock front $\{ (L_s, y') : y' \in E \}$, there exists a unique $C^{2, \al } (\overline{E})$ function $m_1 (y')$ such that
  \begin{equation}\no
  \p_{y_1} \phi (L_s, y') - b_4 \phi (L_s, y') = m_1 (y'),
  \end{equation}
  where $m_1 (y')$ satisfies the Poisson equation with the homogeneous Neumann boundary conditions
  \begin{equation}\no
  \begin{cases}
    (\p_{y_2}^2 + \p_{y_3}^2) m_1 (y') = b_3 q_5 ({\bf \hat{v}} (L_s, y'), \hat{v}_5 (y')) , & \mbox{in } E \co (-1, 1) \times (-1, 1), \\
    \p_{y_2} m_1 (\pm 1, y_3) = 0, & \forall y_3 \in [-1, 1], \\
    \p_{y_3} m_1 (y_2, \pm 1) = 0, & \forall y_2 \in [-1, 1],
  \end{cases}
  \end{equation}
  and the condition
  \begin{equation}\no
  \iint_D m_1 (y_2, y_3) d y' = 0.
  \end{equation}
\end{lemma}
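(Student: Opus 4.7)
The lemma is a compatibility-plus-solvability statement for the Neumann problem for the Poisson equation on the square $E$, and my plan is to prove it in three movements. First, I would reformulate the claim as a classical Neumann problem for $m_1$: the prescribed equation $(\p_{y_2}^2+\p_{y_3}^2)m_1 = b_3 q_5$ on $E$ with homogeneous Neumann data on $\p E$ is solvable up to an additive constant if and only if the source has vanishing integral over $E$, and the zero-mean condition $\iint_E m_1\,dy'=0$ then pins down $m_1$ uniquely. So the non-routine core of the proof is the compatibility identity
\begin{equation*}
\iint_E q_5(\hat{\mathbf v}(L_s,y'),\hat v_5(y'))\,dy' = 0.
\end{equation*}

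Second, for this compatibility check, I would unpack $q_5 = q_1 + a_0 b_1(\p_{y_2}\tilde v_2 + \p_{y_3}\tilde v_3)(L_s,y')$ together with $q_1 = b_1\sum_{i=2}^3 \p_{y_i}\{g_i\} + \sum_{i=2}^3 \p_{y_i}^2\{R_1\}$. Each summand is in divergence form in the transverse variables, so Green's theorem converts $\iint_E q_5\,dy'$ into a sum of boundary integrals along the four edges of $\p E$. On $\{y_2=\pm 1\}$ the relevant integrands reduce to $g_2$, $\p_{y_2}R_1$, and $\tilde v_2$; by the definition \eqref{rh12} together with $v_2(L_s,\pm 1,y_3)=0$ from \eqref{bc20} and $J_2|_{y_2=\pm 1}=0$ from \eqref{bc25}, one obtains $g_2|_{y_2=\pm 1}=0$, while \eqref{bc27} gives $\p_{y_2}R_1|_{y_2=\pm 1}=0$ and \eqref{bc38} gives $\tilde v_2|_{y_2=\pm 1}=0$. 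The analogous statements on $\{y_3=\pm 1\}$ handle the remaining two edges, yielding $\iint_E q_5\,dy'=0$.

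Third, having established compatibility, I would invoke the Lax--Milgram theorem on the mean-zero subspace of $H^1(E)$, where the Dirichlet form is coercive by the Poincar\'e--Wirtinger inequality, to obtain a unique weak solution $m_1$ with $\iint_E m_1\,dy'=0$. To upgrade to $C^{2,\alpha}(\overline E)$, I would apply interior Schauder estimates together with the even symmetric extension across each of $\{y_2=\pm 1\}$ and $\{y_3=\pm 1\}$ already used for $\Pi$ in Step 4.1, reducing the question of regularity near the edges and the four corners of $\overline E$ to interior Schauder on a larger rectangle. The extensions of the source lie in $C^{\alpha}$ across the reflection planes thanks to the parity properties encoded in \eqref{bc20}, \eqref{bc25}--\eqref{bc28}, and \eqref{bc38}.

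The main (indeed only) obstacle is the bookkeeping in the second step: tracing through the definitions of $g_i$, $R_1$, $q_1$ and matching each boundary integrand to the corresponding vanishing compatibility condition from the previous steps of the iteration. Everything else is routine linear elliptic theory on a rectangle.
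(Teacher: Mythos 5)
Your proposal is correct and is exactly the argument the paper has in mind: the paper itself omits the proof (deferring to Lemma 3.2 of \cite{WX23}), and the content is precisely the compatibility identity $\iint_E q_5\,dy'=0$, obtained by writing $q_5$ in transverse divergence form and using $g_2|_{y_2=\pm1}=g_3|_{y_3=\pm1}=0$ (from \eqref{rh12}--\eqref{rh13}, \eqref{bc20}, \eqref{bc25}), $\p_{y_2}R_1|_{y_2=\pm1}=\p_{y_3}R_1|_{y_3=\pm1}=0$ (from \eqref{bc27}), and $\tilde v_2|_{y_2=\pm1}=\tilde v_3|_{y_3=\pm1}=0$ (from \eqref{pf8}, \eqref{bc38}), followed by Lax--Milgram on the mean-zero subspace and Schauder estimates after even reflection across the edges of $E$. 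Your identification of the boundary-integral bookkeeping as the only non-routine step, and your matching of each edge integrand to the corresponding compatibility condition, is accurate.
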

 The proof of this lemma is similar to \cite[Lemma 3.2]{WX23}, here we omit the details.  Then it follows from \eqref{pf10}, \eqref{bc41}-\eqref{bc42} and the above Lemma that
\begin{equation}\label{pf13}
\begin{cases}
  \p_{y_1} (d_1 (y_1) \p_{y_1} \phi) + \p_{y_2}^2 \phi + \p_{y_3}^2 \phi + d_6 (y_1) \p_{y_1} \phi + a_0 b_1 d_5 (y_1) \phi (L_s, y') = G_5 (y), & \mbox{in } \mb{D}, \\
  \p_{y_1} \phi (L_s, y') - b_4 \phi (L_s, y') = m_1 (y'), & \forall y' \in E, \\
  \p_{y_2} \phi (y_1, \pm 1, y_3) = 0, & \text{on } \Sigma_2^{\pm}, \\
  \p_{y_3} \phi (y_1, y_2, \pm 1) = 0, & \text{on } \Sigma_3^{\pm}, \\
  \p_{y_1} \phi (L_1, y') = m_2 (y'), & \forall y' \in E,
\end{cases}
\end{equation}
where
\begin{equation*}\begin{aligned}
& G_5 (y) = G_4 (y) - \f {d_5 (y_1)}{b_3} m_1 (y'), \\
& d_6 (y_1) = - d_1' (y_1) + d_2 (y_1), \ \ m_2 (y') = q_4 ({\bf \hat{v}} (L_s, y'), \hat{v}_5 (y')).
\end{aligned}\end{equation*}
The simple calculations yield that
\begin{equation}\label{bc43}
\begin{cases}
  \p_{y_2} G_5 (y_1, \pm 1, y_3) = 0, & \mbox{on } \Sigma_2^{\pm}, \\
  \p_{y_3} G_5 (y_1, y_2, \pm 1) = 0, & \mbox{on } \Sigma_3^{\pm}, \\
  \p_{y_2} m_2 (\pm 1, y_3) = \p_{y_2} m_3 (\pm 1, y_3) = 0, & \forall y_3 \in [-1 , 1],  \\
  \p_{y_3} m_2 (y_2, \pm 1) = \p_{y_3} m_3 (y_2, \pm 1) = 0, & \forall y_2 \in [-1 , 1] .
\end{cases}
\end{equation}

The oblique boundary condition at $y_1 = L_s$ allows us to replace $\f {d_5 (y_1)}{b_3} \p_{y_1} \phi (L_s, y')$ in the first equation of \eqref{pf10} by $a_0 b_1 d_5 (y_1) \phi (L_s, y')$ in \eqref{pf13}. This simplifies the unique solvability of the problem \eqref{pf13}, as only the trace $\phi (L_s, y')$ not the derivative $\p_{y_1} \phi (L_s, y')$ appears in the nonlocal term. Thus, the existence and uniqueness of the solution to \eqref{pf13} can be shown by the Lax-Milgram theorem and the Fredholm alternatives.

In fact, we first obtain the existence of the weak solution to \eqref{pf13}. We call $\phi \in H^1 (\mb{D})$ is a weak solution to \eqref{pf13}, if for any $\psi \in H^1 (\mb{D})$, there exists
\begin{equation}\label{pf14}
\mc{B} (\phi, \psi) = \mc{L} (\psi), \ \ \forall \psi \in H^1 (\mb{D}),
\end{equation}
where
\begin{equation*}
\begin{aligned}
 \mc{B} (\phi, \psi ) &= \iiint_{\mb{D}} d_1 (y_1) \p_{y_1} \phi \p_{y_1} \psi + \p_{y_2} \phi \p_{y_2} \psi + \p_{y_3} \phi \p_{y_3} \psi - d_6 (y_1) \p_{y_1} \phi \psi \\
& \q \q - a_0 b_1 d_5 (y_1) \phi (L_s, y') \psi d y_1 dy'
+ \iint_E d_1 (L_s) b_4 \phi (L_s, y') \psi (L_s, y') d y', \\
 \mc{L} (\psi) &= - \iiint_{\mb{D}} G_5 (y) \psi (y) d y_1 dy' + \iint_E d_1 (L_1) m_2 (y') \psi (L_1, y') - d_1 (L_s) m_1 (y') \psi (L_1, y') d y'.
 \end{aligned}
\end{equation*}
We next solve \eqref{pf14}.
\begin{lemma}\label{lem2}
  There exists a positive constant $K$ depending only on the background solution such that the following problem has a unique weak solution in $H^1 (\mb{D})$
  \begin{equation}\label{pf29}
  \begin{cases}
  \p_{y_1} (d_1 (y_1) \p_{y_1} \phi) + \p_{y_2}^2 \phi + \p_{y_3}^2 \phi + d_6 (y_1) \p_{y_1} \phi + a_0 b_1 d_5 (y_1) \phi (L_s, y') - K \phi = G_5 (y), & \mbox{in } \mb{D}, \\
  \p_{y_1} \phi (L_s, y') - b_4 \phi (L_s, y') = m_1 (y'), & \forall y' \in E, \\
  \p_{y_2} \phi (y_1, \pm 1, y_3) = 0, & \text{on } \Sigma_2^{\pm}, \\
  \p_{y_3} \phi (y_1, y_2, \pm 1) = 0, & \text{on } \Sigma_3^{\pm}, \\
  \p_{y_1} \phi (L_1, y') = m_2 (y'), & \forall y' \in E.
\end{cases}
  \end{equation}
\end{lemma}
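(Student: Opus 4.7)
The plan is to apply the Lax--Milgram theorem to the bilinear form
\[
\mc{B}_K(\phi,\psi) = \mc{B}(\phi,\psi) + K\iiint_{\mb{D}} \phi\psi\, dy,
\]
where $\mc{B}$ is the bilinear form arising naturally from multiplying \eqref{pf29} by a test function $\psi\in H^1(\mb{D})$ and integrating by parts; the boundary integrals are absorbed using the Robin condition at $y_1=L_s$ (producing the term $d_1(L_s)b_4\iint_E \phi(L_s,y')\psi(L_s,y')\,dy'$), the homogeneous Neumann conditions on $\Sigma_2^\pm\cup\Sigma_3^\pm$, and the Neumann condition at $y_1=L_1$ (producing the inhomogeneous term involving $m_2$ in $\mc{L}$).

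Continuity of $\mc{B}_K$ and $\mc{L}$ on $H^1(\mb{D})$ is routine: the coefficients $d_1,d_5,d_6$ are bounded on $[L_s,L_1]$, Cauchy--Schwarz handles the volume integrals, and the trace theorem $\|\phi(L_s,\cdot)\|_{L^2(E)}+\|\phi(L_1,\cdot)\|_{L^2(E)}\le C\|\phi\|_{H^1(\mb{D})}$ handles the boundary contributions.

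The main step is coercivity. The principal part $\iiint_{\mb{D}} d_1(y_1)|\p_{y_1}\phi|^2+|\p_{y_2}\phi|^2+|\p_{y_3}\phi|^2\,dy$ is bounded below by $d_*\|\nabla\phi\|_{L^2}^2$ because $d_1(y_1)=1-\bar{M}^2(y_1)\ge d_*>0$ in the subsonic region. The boundary term $d_1(L_s)b_4\iint_E|\phi(L_s,y')|^2\,dy'$ is nonnegative since $b_4=a_0 b_1 b_3>0$. For the first-order term, Young's inequality yields
\[
\Big|\iiint_{\mb{D}} d_6(y_1)\p_{y_1}\phi\cdot\phi\,dy\Big| \le \tfrac{d_*}{2}\|\p_{y_1}\phi\|_{L^2}^2 + C\|\phi\|_{L^2(\mb{D})}^2.
\]
The nonlocal term is the main obstacle: since $\phi(L_s,y')$ couples the trace to interior values, Cauchy--Schwarz in $y'$ followed by Young's gives
\[
\Big|a_0 b_1\iiint_{\mb{D}} d_5(y_1)\phi(L_s,y')\phi(y)\,dy\Big| \le \eta \|\phi(L_s,\cdot)\|_{L^2(E)}^2 + C_\eta\|\phi\|_{L^2(\mb{D})}^2.
\]
Choosing $\eta< d_1(L_s)b_4$, the trace piece is absorbed by the positive boundary term. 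Then taking $K$ sufficiently large so that $K-C-C_\eta\ge 1$, we obtain
\[
\mc{B}_K(\phi,\phi) \ge \alpha\bigl(\|\nabla\phi\|_{L^2(\mb{D})}^2+\|\phi\|_{L^2(\mb{D})}^2\bigr) = \alpha\|\phi\|_{H^1(\mb{D})}^2
\]
for some $\alpha>0$ depending only on the background solution. The Lax--Milgram theorem then yields a unique $\phi\in H^1(\mb{D})$ satisfying $\mc{B}_K(\phi,\psi)=\mc{L}(\psi)$ for all $\psi\in H^1(\mb{D})$, which is the desired weak solution. The hardest piece is keeping the trace term $\|\phi(L_s,\cdot)\|_{L^2(E)}^2$ under control while simultaneously absorbing the nonlocal interior term, and it is precisely the positivity of $b_4$ (inherited from the stabilizing effect of the external force through $b_1,b_3>0$) that makes this possible.
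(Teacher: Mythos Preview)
Your proposal is correct and follows essentially the same approach as the paper: both set up the bilinear form $\mc{B}_K$, verify boundedness via the trace theorem, establish coercivity by using the positive boundary term $d_1(L_s)b_4\iint_E|\phi(L_s,y')|^2\,dy'$ to absorb the trace contribution from the nonlocal term (via Young's inequality), and then choose $K$ large to control the remaining $L^2$ terms before invoking Lax--Milgram. The paper's write-up additionally records the interpolation-type trace inequality $\iint_E \phi^2(L_s,y')\,dy' \le \tfrac{C_0}{\eps}\|\phi\|_{L^2(\mb{D})}^2 + \eps\|\p_{y_1}\phi\|_{L^2(\mb{D})}^2$, but the logical structure of the coercivity argument is the same as yours.
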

\begin{proof}
  We call a $H^1 (\mb{D})$ function $\phi$ a weak solution of the problem \eqref{pf29}, if for any $\psi \in H^1 (\mb{D})$, there holds
  \begin{equation}\label{pf30}
  \mc{B}_K (\phi, \psi ) \co \mc{B} (\phi, \psi ) + K \iiint_{\mb{D}} \phi \psi d y = \mc{L} (\psi ), \q \forall \psi \in H^1 (\mb{D}).
  \end{equation}

  For any $\eps > 0$,
  \begin{equation*}
  \iint_E \phi^2 (L_s, y') d y' \le \f {C_0}{\eps } \iiint_{\mb{D}} \phi^2 (y_1, y') d y' dy_1 + \eps \iiint_{\mb{D}} (\p_{y_1} \phi)^2 (y_1, y') d y' dy_1.
  \end{equation*}
  Then
  \begin{equation*}
  |\mc{B}_K (\phi, \psi)| \le C_0 \| \phi \|_{H^1 (\mb{D})} \| \psi \|_{H^1 (\mb{D})}, \quad
   |\mc{L} (\psi)| \le C_0 (\| G_5 \|_{L^2 (\mb{D})} + \sum_{i = 1}^2 \| m_i \|_{L^2 (E)}) \| \psi \|_{H^1 (\mb{D})},
  \end{equation*}
  and
  \begin{equation*}
  \begin{aligned}
   \mc{B}_K (\phi, \phi ) &= \iiint_{\mb{D}} d_1 (y_1) | \p_{y_1} \phi |^2 + | \p_{y_2} \phi |^2  + | \p_{y_3} \phi|^2 - d_6 (y_1) \p_{y_1} \phi \phi \\
  &\q - a_0 b_1 d_5 (y_1) \phi (L_s, y') \phi (y_1, y') d y_1 dy'
  + \iint_E d_1 (L_s) b_4 | \phi (L_s, y')|^2 d y'
  + K \iiint_{\mb{D}} |\phi|^2 d y \\\no
  &\ge
  C_* (\| \na \phi \|_{L^2 (\mb{D})}^2 + \| \phi (L_s, \cdot ) \|_{L^2 (E)}^2) + K \| \phi \|_{L^2 (\mb{D})}^2 - \f {C_*}{4} \| \p_{y_1} \phi \|_{L^2 (\mb{D})}^2 - \tilde{C}_* \| \phi \|_{L^2 (\mb{D})}^2 - \f {C_*}{4}  \| \phi (L_s, \cdot ) \|_{L^2 (E)}^2 \\
  & \ge
  \f {C_*}2 (\| \na \phi \|_{L^2 (\mb{D})}^2 + \| \phi (L_s, \cdot ) \|_{L^2 (E)}^2) + \f K2 \| \phi \|_{L^2 (\mb{D})}^2,
  \end{aligned}
  \end{equation*}
  provided that $K$ is sufficiently large. Then the existence and uniqueness of $H^1 (\mb{D})$ solution $\phi$ to \eqref{pf30} can be obtained by using the Lax-Milgram theorem. Therefore, we complete this proof.
\end{proof}

We are going to solve the problem \eqref{pf13}.
\begin{proposition}
  Suppose that $G_5 \in C^{1, \al} (\overline{\mb{D}})$ and $m_i \in C^{2, \al} (\overline{E})$, $i = 1, 2$ satisfy \eqref{bc43}. Then there exists a unique $C^{3, \al } (\overline{\mb{D}})$ solution to the problem \eqref{pf13} with the estimate
  \begin{equation}\label{est9}
  \| \phi \|_{C^{3, \al } (\overline{\mb{D}})} \le C_* (\| G_5 \|_{C^{1, \al} (\overline{\mb{D}})} + \sum_{i = 1}^2 \| m_i \|_{C^{2, \al} (\overline{E})}),
  \end{equation}
  where $C_*$ depends only on $d_1$, $d_5$, $d_6$, $b_3$, $b_4$ and thus depends only on the background solution.
\end{proposition}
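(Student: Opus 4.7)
The plan is to combine the $H^1$-existence produced by Lemma \ref{lem2}, the Fredholm alternative, and standard Schauder theory enhanced by symmetric extension to handle the side walls and corners of the rectangular cross-section. First I would realise \eqref{pf13} as a compact perturbation of \eqref{pf29}: denoting by $T\colon L^2(\mb{D}) \to H^1(\mb{D}) \hookrightarrow L^2(\mb{D})$ the solution operator of \eqref{pf29}, the compactness of the Sobolev embedding turns the map $u \mapsto T(G_5 - Ku)$ into a compact operator on $L^2(\mb{D})$. Thus the existence of a weak solution to \eqref{pf13} reduces, via the Fredholm alternative, to uniqueness for the homogeneous problem ($G_5 \equiv 0$, $m_1 \equiv m_2 \equiv 0$).

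For the uniqueness, I would test the homogeneous equation against $\phi$ itself. Using $\p_{y_1}\phi(L_1,\cdot)=0$, the oblique condition $\p_{y_1}\phi(L_s,\cdot) = b_4 \phi(L_s,\cdot)$, and the lateral Neumann data, integration by parts produces
\begin{equation*}
\iiint_{\mb{D}} \bigl( d_1 |\p_{y_1}\phi|^2 + |\p_{y_2}\phi|^2 + |\p_{y_3}\phi|^2 \bigr) dy + d_1(L_s) b_4 \iint_E |\phi(L_s,y')|^2 dy' = \iiint_{\mb{D}} d_6 \p_{y_1}\phi \cdot \phi \, dy + a_0 b_1 \iint_E \phi(L_s,y') \int_{L_s}^{L_1} d_5(y_1) \phi(y_1,y') \, dy_1 dy'.
\end{equation*}
In the nonlocal double integral I would write $\phi(y_1,y') = \phi(L_s,y') + \int_{L_s}^{y_1} \p_t \phi(t,y') \, dt$. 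Because $d_5<0$ throughout $[L_s,L_1]$, the leading piece $a_0 b_1 \bigl( \int_{L_s}^{L_1} d_5 \, dy_1 \bigr) \|\phi(L_s,\cdot)\|_{L^2(E)}^2$ is negative and can be moved to the left, joining the positive trace energy $d_1(L_s) b_4 \|\phi(L_s,\cdot)\|_{L^2(E)}^2$; the residual cross term and the $d_6$ term are absorbed into the Dirichlet energy by Cauchy--Schwarz, Poincar\'e on the cross-section, and the trace inequality. This forces $\phi \equiv 0$ and, jointly with Fredholm, yields a unique weak solution together with an $H^1$ bound in terms of the data.

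Finally, for the $C^{3,\al}$ regularity I would first extract the structure of the boundary trace: setting $\tau(y') = \phi(L_s,y')$, the second boundary condition in \eqref{pf13} combined with the bulk equation restricted to $y_1=L_s$ produces an elliptic problem on $E$ for $\tau$ whose right-hand side lies in $C^{1,\al}(\overline E)$, and hence $\tau \in C^{3,\al}(\overline E)$ by the Schauder theory for the Poisson equation with Neumann data on a square (after even extension across $\p E$, using the compatibility \eqref{bc43} for $m_1,m_2$). With $\tau$ identified, the nonlocal term becomes a known $C^{1,\al}$ source and \eqref{pf13} reduces to a genuine linear Neumann/oblique mixed problem. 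Interior Schauder estimates give $C^{3,\al}$ regularity away from $\p \mb{D}$; near the side walls $\Sigma_2^\pm,\Sigma_3^\pm$ I would perform the same even/odd symmetric extensions used in \eqref{ext2}, the compatibility conditions \eqref{bc43} guaranteeing that the extended data stays in the right H\"older class, and then apply Schauder theory to a boundary-free enlarged domain. Combining these with the $H^1$ bound yields the desired estimate \eqref{est9}. The principal obstacle is the uniqueness argument: the nonlocal trace term $a_0 b_1 d_5(y_1) \phi(L_s,y')$ lives on a lower-dimensional set and couples the interior behaviour of $\phi$ to its boundary trace, and it is only the sign $d_5<0$ together with the positive oblique boundary energy $d_1(L_s) b_4$ that provides enough coercivity for the energy identity to close.
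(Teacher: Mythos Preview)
Your overall architecture (Lemma \ref{lem2} $\Rightarrow$ Fredholm alternative $\Rightarrow$ uniqueness $\Rightarrow$ existence, then Schauder plus symmetric extension for the $C^{3,\al}$ estimate) matches the paper. The point that fails is the uniqueness step.

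In the homogeneous energy identity you write down, the only \emph{signed} pieces on the left are $\iiint d_1|\p_{y_1}\phi|^2+|\nabla_{y'}\phi|^2$ and $d_1(L_s)b_4\|\phi(L_s,\cdot)\|_{L^2(E)}^2$. The first-order term $\iiint d_6\,\p_{y_1}\phi\cdot\phi$ has no sign, and after writing $\phi(y_1,y')=\phi(L_s,y')+\int_{L_s}^{y_1}\p_t\phi\,dt$ and applying Cauchy--Schwarz it produces a contribution of the form $C\|d_6\|_\infty(L_1-L_s)\|\p_{y_1}\phi\|_{L^2}^2$ plus trace terms. For this to be absorbed you need $\|d_6\|_\infty(L_1-L_s)$ to be small relative to $\min d_1=1-\max\bar M^2$, and nothing in the problem guarantees that: $d_6=-d_1'+d_2$ and $L_1-L_s$ are fixed by the background solution. ``Poincar\'e on the cross-section'' does not help because the lateral boundary conditions are Neumann, so only $\phi-\fint_E\phi$ is controlled, not $\phi$ itself. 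This obstruction is precisely why Lemma \ref{lem2} introduces the auxiliary $-K\phi$; without it $\mc B$ is not coercive, and the same lack of coercivity blocks your energy proof of uniqueness.

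The paper circumvents this by separating variables: expanding $\phi=\sum X_{i,j}(y_1)\beta_i(y_2)\beta_j(y_3)$ in the Neumann eigenbasis of $E$ reduces the homogeneous problem to the ODEs $d_1X_{i,j}''+d_2X_{i,j}'-(\tau_i^2+\tau_j^2)X_{i,j}+a_0b_1d_5(y_1)X_{i,j}(L_s)=0$ with $X_{i,j}'(L_s)=b_4X_{i,j}(L_s)$ and $X_{i,j}'(L_1)=0$, and then the signs $d_5<0$, $b_4>0$ allow a one-dimensional maximum principle/Hopf argument to force each $X_{i,j}\equiv0$. This uses the sign information pointwise, not in an averaged $L^2$ sense, and is what actually closes the argument.

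A smaller issue: in your regularity step you propose to get an elliptic equation for $\tau(y')=\phi(L_s,y')$ by restricting the bulk equation to $y_1=L_s$. That restriction still contains $\p_{y_1}^2\phi(L_s,y')$, which the oblique condition does not eliminate, so you do not obtain a closed problem for $\tau$ on $E$. The paper instead bootstraps directly: $\phi\in H^1\Rightarrow\phi(L_s,\cdot)\in L^2(E)$, so the nonlocal term $a_0b_1d_5(y_1)\phi(L_s,y')$ is an $L^2$ right-hand side; then De Giorgi/Moser gives $\phi\in C^{\al_1}$, hence $\phi(L_s,\cdot)\in C^{\al_1}$, then Schauder gives $\phi\in C^{1,\al_1}$, and one more pass (after the symmetric extension you describe) yields $C^{3,\al}$.
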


\begin{proof}
  First, the regularity of $H^1 (\mb{D})$ weak solution to \eqref{pf13} can be improved to $C^{3, \al } (\overline{\mb{D}})$ with the following estimate
  \begin{equation}\label{est10}
  \| \phi \|_{C^{3, \al } (\overline{\mb{D}})} \le C_* (\| \phi \|_{H^1 (\overline{\mb{D}})} + \| G_5 \|_{C^{1, \al} (\overline{\mb{D}})} + \sum_{i = 1}^2 \| m_i \|_{C^{2, \al} (\overline{D})}).
  \end{equation}

  Rewrite the equation \eqref{pf13} as
  \begin{equation*}
  \begin{cases}
  \p_{y_1} (d_1 (y_1) \p_{y_1} \phi) + \p_{y_2}^2 \phi + \p_{y_3}^2 \phi + d_6 (y_1) \p_{y_1} \phi = G_6 (y) \co G_5 (y) - a_0 b_1 d_5 (y_1) \phi (L_s, y'), & \mbox{in } \mb{D}, \\
  \p_{y_1} \phi (L_s, y') - b_4 \phi (L_s, y') = m_1 (y'), & \forall y' \in E, \\
  \p_{y_2} \phi (y_1, \pm 1, y_3) = 0, & \text{on } \Sigma_2^{\pm}, \\
  \p_{y_3} \phi (y_1, y_2, \pm 1) = 0, & \text{on } \Sigma_3^{\pm}, \\
  \p_{y_1} \phi (L_1, y') = m_2 (y'), & \forall y' \in E.
\end{cases}
  \end{equation*}
   It follows from the trace theorem that $\phi (L_s, y') \in L^2 (E)$.
   By applying \cite[Theorems 5.36 and 5.45]{LG13}, the global $L^{\infty}$ bound and $C^{\al_1}$ estimates( for some $\al_1 \in (0, 1)$) on
 $\phi$ can be obtain as follows
  \begin{equation*}
  \begin{aligned}
   \| \phi \|_{C^{0, \al_1 } (\overline{E}) } & \le C_* (\| a_0 b_1 d_5 (y_1) \phi (L_s, \cdot) \|_{L^2 (E)} + \| G_5 \|_{L^4 (\mb{D})} + \sum_{i = 1}^2 \| m_i \|_{C^{1, \al} (\overline{E})} ) \\
  &\le C_*  (\| \phi \|_{H^1 (\mb{D})} + \| G_5 \|_{C^{1, \al} (\overline{\mb{D}})} + \sum_{i = 1}^2 \| m_i \|_{C^{1, \al} (\overline{E})} ).
  \end{aligned}
  \end{equation*}
  Thus the term $a_0 b_1 d_5 (y_1) \phi (L_s, \cdot) \in C^{\al_1} (\overline{\mb{D}})$, and then the Schauder estimate in \cite[Theorem 4.6]{LG13}
 implies that
  \begin{equation}\no
  \| \phi \|_{C^{1, \al_1 } (\overline{E}) } \le C_*  (\| \phi \|_{H^1 (\mb{D})} + \| G_5 \|_{C^{1, \al} (\overline{\mb{D}})} + \sum_{i = 1}^2 \| m_i \|_{C^{1, \al} (\overline{E})} ).
  \end{equation}
  Next, we extend the function $\phi$, $G_6$ to $\mb{D}_e$ as  in \eqref{ext2}, and extend $m_i$, $i = 1, 2$ as
  \begin{equation}\no
  \tilde{m}_i (y') = \begin{cases}
                       m_i (y_2, 2 - y_3), & y' \in [-1, 1] \times [1, 3], \\
                       m_i (y_2, y_3), & y' \in [-1, 1] \times [-1, 1],  \\
                       m_i (y_2, -2 - y_3), & y' \in [-1, 1] \times [-3, -1].
                     \end{cases}
  \end{equation}
  The extension guarantees that $\tilde{G}_6 \in C^{1, \al} (\overline{\mb{D}_e})$ and $\tilde{m}_i \in C^{2, \al } ([-3, 3] \times [-3, 3])$ by the compatibility conditions \eqref{bc43}. Therefore,
  \begin{equation}\no
  \begin{cases}
  \p_{y_1} (d_1 (y_1) \p_{y_1} \tilde{\phi}) + \p_{y_2}^2 \tilde{\phi} + \p_{y_3}^2 \tilde{\phi} + d_6 (y_1) \p_{y_1} \tilde{\phi} = \tilde{G}_6 (y) , & \mbox{in } \mb{D}_e, \\
  \p_{y_1} \tilde{\phi} (L_s, y') - b_4 \tilde{\phi} (L_s, y') = \tilde{m}_1 (y'), & \forall y' \in [-3, 3] \times [-3, 3], \\
  \p_{y_1} \tilde{\phi} (L_1, y') = \tilde{m}_2 (y'), & \forall y' \in [-3, 3] \times [-3, 3].
\end{cases}
  \end{equation}
 Then the standard Schauder estimate implies the estimate \eqref{est10}.

  We turn to show the uniqueness of the $H^1 (\mb{D})$ weak solution to \eqref{pf13}, that is suppose $G_4 \equiv 0$, $m_i \equiv 0$, $i = 1, 2$, and $\phi \in H^1 (\mb{D})$ is a weak solution to \eqref{pf13}, then $\phi \equiv 0$ in $\mb{D}$.

  Let $\{ \beta_i (y_2) \}_{i = 1}^{\infty}$ be the family of all eigenfunctions to the eigenvalue problem
\begin{equation*}
\begin{cases}
  - \beta_i '' (y_2) = \tau_i^2 \beta_i (y_2), & y_2 \in (-1, 1) ,\\
  \beta_i ' (-1) = \beta_i' (1) = 0. &
\end{cases}
\end{equation*}
 Then
\begin{equation*}
\{ \beta_i (y_2) \}_{i = 0}^{\infty} = \b\{ \f 1{\sqrt{2}} \b\} \cup \b\{ \sqrt{2} \cos (n \pi y_2) \b \}_{n = 0}^{\infty} \cup \b\{ \sin \b( \f {2k +1}2 \pi y_2 \b) \b\}_{n = 0}^{\oo} ,
\end{equation*}
which is a complete orthonormal basis in $L^2 ((-1,1))$ and an orthogonal basis in $H^1 ((-1,1))$. Similarly, the set $\{ \beta_i (y_2) \beta_j (y_3) \}_{i,j = 0}^{\oo}$ will form a complete orthonormal basis in $L^2 ((-1, 1) \times (-1,1))$ and an orthogonal basis in $H^1((-1, 1) \times (-1,1)) $.

Since $\phi \in C^{3, \al} (\overline{\mb{D}})$, then its Fourier series converges
\begin{equation*}
\phi (y_1, y') = \sum_{i, j = 0}^{\oo} X_{i,j} (y_1) \beta_i (y_2) \beta_j (y_3).
\end{equation*}
Substituting this into \eqref{pf13} yields that for $i, j \ge 0$, it holds that
\begin{equation*}
\begin{cases}
 d_1 (y_1) X_{i,j}'' (y_1) + d_2 (y_1) X_{i,j}' (y_1)  - ( \tau_i^2 + \tau_j^2 ) X_{i,j} (y_1) + a_0 b_1 d_5 (y_1) X_{i,j} (L_s)  = 0, \\
 X_{i,j}' (L_s) - b_4 X_{i,j} (L_s) = 0, \\
 X_{i,j}' (L_1) = 0.
\end{cases}
\end{equation*}
Suppose that $X_{i,j} (L_s) = 0$, then $X_{i, j} (y_1) \equiv 0 $ for $\forall y_1 \in [L_s, L_1]$, by the maximum principle and Hopf's lemma. Suppose that $X_{i,j} (L_s) > 0$. Then
\begin{equation}\label{pf15}
\begin{cases}
 d_1 (y_1) X_{i,j}'' (y_1) + d_2 (y_1) X_{i,j}' (y_1)  - ( \tau_i^2 + \tau_j^2 ) X_{i,j} (y_1) = - a_0 b_1 d_5 (y_1) X_{i,j} (L_s)  > 0, \ \ \forall y_1 \in [L_s, L_1], \\
 X_{i,j}' (L_s) = b_4 X_{i,j} (L_s) > 0, \\
 X_{i,j}' (L_1) = 0.
\end{cases}
\end{equation}
Assume that there exists $\tilde{y}_1 \in [L_s, L_1]$, such that $X_{i,j} (\tilde{y}_1) = \max_{y_1 \in [L_s, L_1]} X_{i,j} (y_1) > 0 $. Then the second and the third equations in \eqref{pf15} imply that $\tilde{y}_1 \in ( L_s, L_1]$. If $\tilde{y}_1 \in (L_s, L_1)$, then $X_{i,j}' (\tilde{y}_1) = 0$, $X_{i,j}'' (\tilde{y}_1) \le 0$, which contradicts to the first equation in \eqref{pf15}. If $\tilde{y}_1 = L_1$, then Hopf's lemma yields that $X_{i,j}' (L_1) > 0$, which also contradicts. Similarly, $X_{i,j} (L_s) < 0$ will induce a contradiction. Hence, $X_{i,j} (y_1) \equiv 0$ for all $y_1 \in [L_s, L_1]$. We consequently get $\phi \equiv 0$ in $\mb{D}$. The uniqueness of the $H^1$ weak solution to \eqref{pf13} has been proved.

Using Lemma \ref{lem2} and the Fredholm alternatives for elliptic equations, as well as the argument in \cite[Theorem 8.6]{GT82}, we can deduce that there exists a unique $H^1 (\mb{D})$ weak solution to \eqref{pf13}. Then, the uniqueness helps us to derive the estimate \eqref{est9} from \eqref{est10}. This completes the proof.
\end{proof}

Thus $N_1 (y) = \p_{y_1} \phi(y) + \f {d_4 (y_1)}{b_3} \p_{y_1} \phi (L_s, y') $, $N_2 (y) = \p_{y_2} \phi$ and $N_3 (y) = \p_{y_3} \phi $ is the solution to \eqref{pf9} with \eqref{bc39}-\eqref{bc40}. Differentiating the first equation in \eqref{pf9} with respect to $y_2$ (resp. $y_3$) and evaluating at $y_2 = \pm 1$ (resp. $y_3 = \pm 1$), one gets from \eqref{bc38}, \eqref{bc41}-\eqref{bc42} that
\begin{equation}\no
\begin{cases}
\p_{y_2}^2 N_2 (y_1, \pm 1, y_3) = 0, & \mbox{on } \Sigma_2^{\pm}, \\
   \p_{y_3}^2 N_3 (y_1, y_2, \pm 1) = 0, & \mbox{on } \Sigma_3^{\pm}.
\end{cases}
\end{equation}

Then
\begin{equation}\label{pf32}
 v_1 (y) = \tilde{v}_1 (y) + \p_{y_1} \phi(y) + \f {d_4 (y_1)}{b_3} \p_{y_1} \phi (L_s, y'), \
 v_2 (y) = \tilde{v}_2 (y) + \p_{y_2} \phi (y), \
v_3 (y) = \tilde{v}_3 (y) + \p_{y_3} \phi (y),
\end{equation}
would be the solution to \eqref{pf6} with \eqref{bc34}-\eqref{bc35} and satisfy the estimate
\begin{equation}\label{est12}\begin{aligned}
 \sum_{i = 1}^3 \| v_i \|_{C^{2, \al } (\overline{\mb{D}})}
& \le C_* \b(\sum_{i = 1}^3 \| \tilde{v}_i \|_{C^{2, \al } (\overline{\mb{D}})} + \| \na \phi \|_{C^{2, \al } (\overline{\mb{D}})}  + \| \p_{y_1} \phi (L_s, y') \|_{C^{2, \al} (\overline{E})} \b) \\
& \le
C_* (\eps + \eps \| ({\bf \hat{v}}, \hat{v}_5) \|_{\mc{V}} + \| ({\bf \hat{v}}, \hat{v}_5) \|_{\mc{V}}^2)
\le C_* (\eps + \eps \de_0 + \de_0^2).
\end{aligned}\end{equation}
Moreover, there holds
\begin{equation}\label{bc45}
\begin{cases}
  (v_2, \p_{y_2}^2 v_2, \p_{y_2} v_1, \p_{y_2} v_3) (y_1, \pm 1, y_3) = 0, & \mbox{on } \Sigma_2^{\pm}, \\
  (v_3, \p_{y_3}^2 v_3, \p_{y_3} v_1, \p_{y_3} v_2) (y_1, y_2, \pm 1) = 0, & \mbox{on } \Sigma_3^{\pm}.
\end{cases}
\end{equation}

\textbf{Step 5.} According to \eqref{pf24},
\begin{equation}\label{pf33}
v_4 (y_1, y') = \f {b_2}{b_1} v_1 (L_s, y') + R_4 ({\bf \hat{v}} (L_s, \beta_2 (y), \beta_3 (y)), \hat{v}_5 (\beta_2 (y), \beta_3 (y))),
\end{equation}
we can infer that the function $v_4$ can be uniquely determined by \eqref{pf32}. Furthermore, $v_4$ satisfies the following estimate
\begin{equation}\label{est13}
\| v_4 \|_{C^{2, \al} (\overline{\mb{D}})} \le C_* \| v_1 (L_s, \cdot ) \|_{C^{2, \al } (\overline{E})} + C_* (\eps \| ({\bf \hat{v}}, \hat{v}_5) \|_{\mc{V}} + \| ({\bf \hat{v}}, \hat{v}_5) \|_{\mc{V}}^2 ) \le C_* (\eps \de_0 + \de_0^2),
\end{equation}
and the compatibility conditions
\be\label{bc46}
\begin{cases}
  \p_{y_2} v_4 (y_1, \pm 1, y_3) = \f {b_2}{b_1} \p_{y_2} v_1 (L_s, \pm 1, y_3) = 0, & \mbox{on } (y_1, y_3) \in [L_s, L_1] \times [-1,1], \\
  \p_{y_3} v_4 (y_1, y_2, \pm 1) = \f {b_2}{b_1} \p_{y_3} v_1 (L_s, y_2, \pm 1) = 0, & \mbox{on } (y_1, y_2) \in [L_s, L_1] \times [-1,1].
\end{cases}
\ee

Moreover, according to \eqref{pf1}, the shock front is formulated as
\begin{equation}\label{pf34}
v_5 (y') = \f 1{b_1} v_1 (L_s, y') - \f 1{b_1} R_1 (\hat{{\bf v}} (L_s, y'), \hat{v}_5),
\end{equation}
and $v_5 \in C^{2, \al} (\overline{E})$ satisfies
\begin{equation}\label{bc47}
\begin{cases}
\p_{y_2} v_5 (\pm 1, y_3) = 0, & \mbox{on } y_3 \in [-1,1], \\
  \p_{y_3} v_5 (y_2, \pm 1) = 0, & \mbox{on } y_2 \in [-1,1].
\end{cases}
\end{equation}

Considering that the $C^{2, \al} (\overline{\mb{D}})$ estimate of $R_4$ requires the $C^{3, \al} (\overline{E})$ estimate of $v_5$, we have to improve the regularity of $v_5$ to be $C^{3, \al} (\overline{E})$. To this end, we define
\begin{equation*}
\begin{cases}
  F_2 (y') = \p_{y_2} v_1 (L_s, y') - a_0 b_1 v_2 (L_s, y') - \p_{y_2} R_1 (\hat{{\bf v}} (L_s, y'), \hat{v}_5) - b_1 g_2 (\hat{{\bf v}} (L_s, y'), \hat{v}_5), \\
  F_3 (y') = \p_{y_3} v_1 (L_s, y') - a_0 b_1 v_3 (L_s, y') - \p_{y_3} R_1 (\hat{{\bf v}} (L_s, y'), \hat{v}_5) - b_1 g_3 (\hat{{\bf v}} (L_s, y'), \hat{v}_5).
\end{cases}
\end{equation*}
Then there holds
\begin{equation}\no
 \begin{cases}
          \p_{y_2} F_3 - \p_{y_3} F_2 = 0, & \text{in } E,\\
          \p_{y_2} F_2 + \p_{y_3} F_3 = 0, & \text{in } E, \\
          F_2 (\pm 1 , y_3) = 0, & \text{on } y_3 \in [-1,1], \\
          F_3 (y_2, \pm 1) = 0, & \text{on } y_2 \in [-1,1],
        \end{cases}
\end{equation}
which follows from the first boundary condition in \eqref{pf6} and the boundary conditions \eqref{bc34}-\eqref{bc35}. Using Lemma \ref{lm1}, $F_2 = F_3 \equiv 0$ in $E$. Then \eqref{pf34} implies
\begin{equation}\label{pf36}
\begin{cases}
  \p_{y_2} v_5 (y') = a_0 v_2 (L_s, y') + g_2 (\hat{{\bf v}} (L_s, y'), \hat{v}_5), & \mbox{in } E, \\
  \p_{y_3} v_5 (y') = a_0 v_3 (L_s, y') + g_3 (\hat{{\bf v}} (L_s, y'), \hat{v}_5), & \mbox{in } E.
\end{cases}
\end{equation}
Hence, $v_5 \in C^{3, \al} (\overline{E})$ with the following estimate
\begin{equation}\label{est14}
\begin{aligned}
 \| v_5 \|_{C^{3, \al} (\overline{E})}
&\le C_* (\| v_1 (L_s, \cdot) \|_{C^{2, \al } (\overline{E})} + \| R_1 (\hat{{\bf v}} (L_s, y'), \hat{v}_5) \|_{C^{2, \al } (\overline{E})}) \\
& \q
+ C_* \sum_{i = 2}^3 ( \| v_i (L_s, \cdot) \|_{C^{2, \al } (\overline{E})} + \| g_j (\hat{{\bf v}} (L_s, y'), \hat{v}_5) \|_{C^{2, \al } (\overline{E})} ) \\
& \le
C_* (\eps + \eps \| ({\bf \hat{v}}, \hat{v}_5) \|_{\mc{V}} + \| ({\bf \hat{v}}, \hat{v}_5) \|_{\mc{V}}^2)
\le C_* (\eps + \eps \de_0 + \de_0^2).
\end{aligned}
\end{equation}

Differentiating the first (second) equation in \eqref{pf36} with respect to $y_2$ (resp. $y_3$) twice and evaluating at $y_2 = \pm 1$ (resp. $y_3 = \pm 1$), using \eqref{bc25} and \eqref{bc45}, one gets
\begin{equation}\label{bc48}
\begin{cases}
\p_{y_2}^3 v_5 (y_1, \pm 1, y_3) = 0, & \forall y_3 \in [-1 ,1], \\
  \p_{y_3}^3 v_5 (y_1, y_2, \pm 1) = 0, & \forall y_2 \in [-1 ,1].
\end{cases}
\end{equation}

Combining the estimates \eqref{est12}, \eqref{est13} and \eqref{est14}, one gets
\begin{equation}\no
\| ({\bf v}, v_5) \|_{\mc{V}} = \sum_{i = 1}^4 \| v_i \|_{C^{2, \al} (\overline{\mb{D}})} + \| v_5 \|_{C^{3, \al} (\overline{E})} \le C_* (\eps + \eps \de_0 + \de_)^2) \le C_* (\eps + \de_0^2).
\end{equation}
Choosing $\de_0 = \sqrt{\eps}$ and letting $\eps < \eps_0 = \f 1{4 C_*^2}$, we derive that $\| ({\bf v}, v_5) \|_{\mc{V}} \le 2 C_* \eps \le \de_0$. On the other hand, we have shown that the compatibility conditions \eqref{bc45}, \eqref{bc46}, \eqref{bc47}, \eqref{bc48}, thus $({\bf v}, v_5) \in \mc{V}$. We now can define the operator $\mc{T} ({\bf \hat{v}}, \hat{v}_5) = ({\bf v}, v_5)$, which maps $\mc{V}$ to itself.

\textbf{Step 6. } It remains to verify that $\mc{T}$ is a contraction mapping in the norm
\begin{equation*}
\| ({\bf v}, v_5) \|_{\mc{W}} \co \sum_{i = 1}^4 \| v_i \|_{C^{1, \al} (\overline{\mb{D}})} + \| v_5 \|_{C^{2, \al } (\overline{E})}.
\end{equation*}
 After that the unique fixed point of $\mc{T}$ in $\mc{V}$ is the desired solution. For any two elements $(\hat{{\bf v}}^i, \hat{v}_5^i)$, $i = 1, 2$ in $\mc{V}$, let $({\bf v}^i, v_5^i) = \mc{T} (\hat{{\bf v}}^i, \hat{v}_5^i) $, $i = 1, 2$. Set
\begin{equation*}
(\hat{{\bf z}}, \hat{z}_5) = ({\bf \hat{v}}^1, \hat{v}_5^1) - ({\bf \hat{v}}^2, \hat{v}_5^2), \ \
({\bf z}, z_5) = ({\bf v}^1, v_5^1) - ({\bf v}^2, v_5^2),
\end{equation*}
and $J_i = \tilde{\om}_i^1 - \tilde{\om}_i^2$, $i = 1, 2, 3$ with the vorticity $ (\tilde{\om}_1^j, \tilde{\om}_2^j, \tilde{\om}_3^j ) = \curl {\bf \hat{v}}^j$ for $j = 1, 2$.

It follows from  \eqref{pf2}  that $z_4$ satisfies
\begin{equation}\no
\begin{cases}
  \b( D_1^{\hat{v}_5^1} + \f {\hat{v}_2^1}{\bar{u} (D_0^{\hat{v}_5^1}) + \hat{v}_1^1} D_2^{\hat{v}_5^1} + \f {\hat{v}_3^1}{\bar{u} (D_0^{\hat{v}_5^1}) + \hat{v}_1^1} D_3^{\hat{v}_5^1} \b) z_4 = F_4 (y),  \\
  z_4 (L_s, y') = h_4 (y'),
\end{cases}
\end{equation}
where
\begin{equation*}\begin{aligned}
&
F_4 (y') = - (D_1^{\hat{v}_5^1} - D_1^{\hat{v}_5^2}) v_4^2
- \sum_{i = 2}^3 \b( \f {\hat{v}_i^1}{\bar{u} (D_0^{\hat{v}_5^1}) + \hat{v}_1^1} - \f {\hat{v}_i^2}{\bar{u} (D_0^{\hat{v}_5^2}) + \hat{v}_1^2}
\b) v_4^2, \\
&
h_4 (y') = b_2 z_5 (y') + R_2 ({\bf \hat{v}}^1, \hat{v}_5^1) - R_2 ({\bf \hat{v}}^2, \hat{v}_5^2).
\end{aligned}\end{equation*}
Set $I_2^i$ and $I_3^i$ for $i = 1, 2$ as we defined in \eqref{I2} and \eqref{I3}, where we use $({\bf \hat{v}}^i, \hat{v}_5^i)$ replace $({\bf \hat{v}}, \hat{v}_5)$. Let $(\tau, \bar{y}^i_2 (L_s;y), \bar{y}^i_3 (L_s; y))$ be the trajectory associated with $(1, I_2^i, I_3^i)$, $i = 1, 2$ respectively. Then, together with \eqref{pf33} yield that
\begin{equation}\no
\begin{aligned}
z_4 (y) &= \f {b_2}{b_1} z_1 (L_s, y') + R_4 ({\bf \hat{v}}^1 (L_s, \beta_2^1 (y), \beta_3^1 (y)), \hat{v}_5^1 (\beta_2^1 (y), \beta_3^1 (y))) \\
&\quad- R_4 ({\bf \hat{v}}^2 (L_s, \beta_2^2 (y), \beta_3^2 (y)), \hat{v}_5^2 (\beta_2^2 (y), \beta_3^2 (y))).
\end{aligned}
\end{equation}

Then we estimate the first component of vorticity. It follows from \eqref{pf5} that
\begin{equation}\no
\begin{cases}
  \b( D_1^{\hat{v}_5^1} + \f {\hat{v}_2^1}{\bar{u} (D_0^{\hat{v}_5^1}) + \hat{v}_1^1} D_2^{\hat{v}_5^1} + \f {\hat{v}_3^1}{\bar{u}(D_0^{\hat{v}_5^1}) + \hat{v}_1^1} D_3^{\hat{v}_5^1} \b) J_1  + \mu ({\bf \hat{v}}^1 , \hat{v}_5^1) J_1
 = F_6 (y), \\
  J_1 (L_s, y') = h_6 (y')  ,
\end{cases}
\end{equation}
where
\begin{equation*}
\begin{aligned}
 F_6 (y)& =
- (D_1^{\hat{v}_5^1} - D_1^{\hat{v}_5^2}) \tilde{\om}_1^2
- \sum_{k = 2}^3 \b( \f {\hat{v}_2^1}{\bar{u} (D_0^{\hat{v}_5^1}) + \hat{v}_1^1} D_2^{\hat{v}_5^1} - \f {\hat{v}_2^2}{\bar{u} (D_0^{\hat{v}_5^2}) + \hat{v}_1^2} D_2^{\hat{v}_5^2} \b) \tilde{\om}_1^2 \\
& \q
- ( \mu ({\bf \hat{v}}^1 , \hat{v}_5^1) - \mu ({\bf \hat{v}}^2 , \hat{v}_5^2)) \tilde{\om}_1^2
+ H_0 ({\bf \hat{v}}^1, \hat{v}_5^1) - H_0 ({\bf \hat{v}}^2, \hat{v}_5^2), \\
h_6 (y') &= R_6 ({\bf \hat{v}}^1 (L_s, y'), \hat{v}_5^1 (y')) - R_6 ({\bf \hat{v}}^2 (L_s, y'), \hat{v}_5^2 (y')).
\end{aligned}
\end{equation*}
Thus we obtain the estimate
\begin{equation}\no
\begin{aligned}
\| J_1 \|_{C^{\al} (\overline{\mb{D}})} &\le C_* (\| J_1 (L_s, \cdot) \|_{C^{\al} (\overline{E})} + \| F_6 \|_{C^{\al} (\overline{\mb{D}})} )
\le C_* (\eps + \| \tilde{\om}^2_1 \|_{C^{1, \al} (\overline{\mb{D}})} + \| ({\bf \hat{v}}^2, \hat{v}_5^2) \|_{\mc{V}}) \| ({\bf \hat{z}}, \hat{z}_5) \|_{\mc{W}} \\
&
\le C_* (\eps + \de_0) \| ({\bf \hat{z}}, \hat{z}_5) \|_{\mc{W}}.
\end{aligned}
\end{equation}

Next, we turn to estimate $z_i$ for $i = 1, 2, 3$. The \eqref{pf6} implies that
\begin{equation}\no
\begin{cases}
d_1 (y_1) \p_{y_1} z_1 + \p_{y_2} z_2 + \p_{y_3} z_3 + d_2 (y_1) z_1 + \f {b_2}{b_1} d_3 (y_1) z_1 (L_s, y') =  F_0 (y), & \text{in } \mb{D}, \\
  \p_{y_2} z_3 - \p_{y_3} z_2 + \p_{y_1} \Xi = F_1 (y), & \text{in } \mb{D}, \\
  \p_{y_3} z_1 - \p_{y_1} z_3 - d_4 (y_1) \p_{y_3} z_1 (L_s, y') + \p_{y_2} \Xi = F_2 , & \text{in } \mb{D}, \\
  \p_{y_1} z_2 - \p_{y_2} z_1 + d_4 (y_1) \p_{y_2} z_1 (L_s, y') + \p_{y_3} \Xi = F_3 , & \text{in } \mb{D}, \\
  (\p_{y_2}^2 + \p_{y_3}^2) z_1 (L_s, y') = a_0 b_1 \p_{y_2} z_2 (L_s, y') + a_0 b_1 \p_{y_3} z_3 (L_s, y') + h_1 (y'), & \forall y' \in E, \\
  z_2 (y_1, \pm 1 , y_3) = \Xi (y_1, \pm 1, y_3) = 0, & \text{on } \Sigma_2^{\pm}, \\
  z_3 (y_1, y_2, \pm 1) = \Xi (y_1, y_2, \pm 1) = 0, & \text{on } \Sigma_3^{\pm}, \\
  z_1 (L_1, y') - d_4 (L_1) z_1 (L_s, y') = h_2 (y'), & \forall y' \in E, \\
  \Xi (L_s, y') = \Xi (L_1, y') = 0, & \forall y' \in E,
  \end{cases}
\end{equation}
where
\begin{equation*}\begin{aligned}
& \Xi = \Pi^1 - \Pi^2, \\
& F_0 (y) = G_0 ({\bf \hat{v}}^1, \hat{v}_5^1) - G_0 ({\bf \hat{v}}^2, \hat{v}_5^2), \ \ F_1 (y) = G_1 ({\bf \hat{v}}^1, \hat{v}_5^1) - G_1 ({\bf \hat{v}}^2, \hat{v}_5^2), \\
& F_2 (y) = G_2 ({\bf \hat{v}}^1, \hat{v}_5^1) - G_2 ({\bf \hat{v}}^2, \hat{v}_5^2), \ \  F_3 (y) = G_3 ({\bf \hat{v}}^1, \hat{v}_5^1) - G_3 ({\bf \hat{v}}^2, \hat{v}_5^2), \\
& h_1 (y') = q_1 ({\bf \hat{v}}^1 (L_s, y'), \hat{v}_5^1 (y')) - q_1 ({\bf \hat{v}}^1 (L_s, y'), \hat{v}_5^2 (y')), \\
& h_2 (y') = q_4 ({\bf \hat{v}}^1 (L_s, y'), \hat{v}_5^1 (y')) - q_4 ({\bf \hat{v}}^1 (L_s, y'), \hat{v}_5^2 (y')).
\end{aligned}\end{equation*}
Moreover,
\be\no
 (\p_{y_2} z_1 - a_0 b_1 z_2) (L_s, \pm 1, y_3) = 0 , \q \forall y_3 \in [-1, 1], \\\no
  (\p_{y_3} z_1 - a_0 b_1 z_3) (L_s, y_2, \pm 1) = 0 , \q \forall y_2 \in [-1, 1].
\ee
Similar to the analysis in \textbf{Step 4}, there holds
\begin{equation}\no
\begin{aligned}
\sum_{i = 1}^3 \| z_i \|_{C^{1, \al } (\overline{\mb{D}})}
&\le C_* (\sum_{i = 0}^3 \| F_i \|_{C^{\al } (\overline{\mb{D}})} + \sum_{i = 1}^2 \| h_i \|_{C^{1, \al} (\overline{E})})
\le C_* (\eps + \sum_{i = 1}^2 \| ({\bf \hat{v}}^i, \hat{v}_5^i) \|_{\mc {V}}) \| ({\bf \hat{z}}, \hat{z}_5) \|_{\mc{W}}\\
& \le C_* (\eps + \de_0) \| ({\bf \hat{z}}, \hat{z}_5) \|_{\mc{W}}.
\end{aligned}
\end{equation}

We continue to estimate $z_4$. According to the definition of $R_4$, the following estimate is required.
\begin{equation}\no
\begin{aligned}
& \| (\hat{v}_5^1 (\beta_2^1(y), \beta_3^1 (y)) - \hat{v}_5^1 (y')) - (\hat{v}_5^2 (\beta_2^2(y), \beta_3^2 (y)) - \hat{v}_5^2 (y')) \|_{C^{1, \al} (\overline{\mb{D}})} \\
& \le
\| \hat{z}_5 (\beta_2^1(y), \beta_3^1 (y)) - \hat{z}_5 (y') \|_{C^{1, \al} (\overline{\mb{D}})}
+ \| \hat{v}_5^2 (\beta_2^1(y), \beta_3^1 (y)) -  \hat{v}_5^2 (\beta_2^2(y), \beta_3^2 (y)) \|_{C^{1, \al} (\overline{\mb{D}})} \\
& \le
C_* \sum_{i = 2}^3 \| \beta_i^1 - y_i \|_{C^{1, \al } (\overline{\mb{D}})} \| \hat{z}_5 \|_{C^{2, \al } (\overline{E})}
+ \sum_{i = 2}^3 \| \beta_i^1 - \beta_i^2 \|_{C^{1, \al } (\overline{\mb{D}})} \| \hat{v}_5^2 \|_{C^{2, \al } (\overline{E})}.
\end{aligned}\end{equation}

Denote $Y_i (\tau;y) = \bar{y}_i^1 (\tau; y) - \bar{y}_i^2 (\tau; y)$ for $i = 2, 3$, then $Y_i (L_s; y) = \beta_i^1 (y) - \beta_i^2 (y)$.  It follows from \eqref{ode1} 
and a simple calculation that
\begin{equation}\no
\begin{cases}
  Y_2 (t; y) = \int_{y_1}^t a_{22} (\tau; y) Y_2 (\tau; y) + a_{23} (\tau; y) Y_3 (\tau; y) d \tau + \int_{y_1}^t a_2 (\tau; y) d\tau, \\
  Y_3 (t; y) = \int_{y_1}^t a_{32} (\tau; y) Y_2 (\tau; y) + a_{33} (\tau; y) Y_3 (\tau; y) d\tau + \int_{y_1}^t a_3 (\tau; y) d\tau,
\end{cases}
\end{equation}
where  $a_{ij}$, $i, j = 2, 3$ are functions of ${\bf \hat{v}}^1$, $\hat{v}_5^1$ and $a_i$, $i = 2, 3$ are functions of ${\bf \hat{z}}$, $\hat{z}_5$.
Then the Gronwall's inequality gives that
\begin{equation*}
\sum_{i = 2}^3 \| \beta_i^1 - \beta_i^2 \|_{C^0 (\overline{\mb{D}})} \le C_* (\sum_{j = 1}^4 \| \hat{z}_j \|_{C^0 (\overline{\mb{D}})} + \| \hat{z}_5 \|_{C^1 (\overline{E})}).
\end{equation*}
Similarly, one can derive
\begin{equation*}
\sum_{i = 2}^3 \| \beta_i^1 - \beta_i^2 \|_{C^{1, \al} (\overline{\mb{D}})} \le C_*  \| ({\bf \hat{z}}, \hat{z}_5) \|_{\mc{W}}.
\end{equation*}
Therefore, $z_4$ satisfies the following estimate
\begin{equation}\no
\| z_4 \|_{C^{1, \al} (\overline{\mb{D}})} \le C_* (\| z_1 (L_s, \cdot ) \|_{C^{1, \al } (\overline{E})} + \| R_4^1 - R_4^2 \|_{C^{1, \al } (\overline{\mb{D}})}) \le C_* (\eps + \de_0 )  \| ({\bf \hat{z}}, \hat{z}_5) \|_{\mc{W}}.
\end{equation}

Finally, \eqref{pf34} gives
\begin{equation*}
z_5 (y') = \f 1 {b_1} z_1 (L_s, y') - \f 1 {b_1} (R_1 ({\bf \hat{v}}^1 (L_s, y'), \hat{v}_5^1 (y')) - R_1 ({\bf \hat{v}}^2 (L_s, y'), \hat{v}_5^2 (y'))),
\end{equation*}
from which we can infer that
\begin{equation}\no
\begin{aligned}
\| z_6 \|_{C^{1, \al } (\overline{E})} &\le C_* (\| z_1 (L_s, \cdot) \|_{C^{1, \al } (\overline{E})} + \| R_1 ({\bf \hat{v}}^1 (L_s, y'), \hat{v}_5^1 (y')) - R_1 ({\bf \hat{v}}^2 (L_s, y'), \hat{v}_5^2 (y')) \|_{C^{1, \al } (\overline{\mb{D}})})\\
& \le C_*  (\eps + \de_0 )  \| ({\bf \hat{z}}, \hat{z}_5) \|_{\mc{W}}.
\end{aligned}
\end{equation}

It follows from \eqref{pf36} that
\begin{equation*}
\begin{cases}
  \p_{y_2} z_5 (y') = a_0 z_2 (L_s, y') + g_2 ({\bf \hat{v}}^1 (L_s, y'), \hat{v}_5^1 (y')) - g_2 ({\bf \hat{v}}^2 (L_s, y'), \hat{v}_5^2 (y')) , & \mbox{in } E, \\
  \p_{y_3} z_5 (y') = a_0 z_3 (L_s, y') + g_3 ({\bf \hat{v}}^1 (L_s, y'), \hat{v}_5^1 (y')) - g_3 ({\bf \hat{v}}^2 (L_s, y'), \hat{v}_5^2 (y')) , & \mbox{in } E.
\end{cases}
\end{equation*}
Then
\begin{equation}\no
\begin{aligned}
&\| (\p_{y_2} z_5, \p_{y_3} z_5) \|_{C^{1, \al } (\overline{E})} \\
&\le C_* \sum_{i = 2}^3 (\| z_i (L_s, y') \|_{C^{1, \al } (\overline{E})} + \| g_i ({\bf \hat{v}}^1 (L_s, y'), \hat{v}_5^1 (y')) - g_i ({\bf \hat{v}}^2 (L_s, y'), \hat{v}_5^2 (y')) \|_{C^{1, \al } (\overline{E})} ) \\
& \le C_* (\eps + \de_0 )  \| ({\bf \hat{z}}, \hat{z}_5) \|_{\mc{W}}.
\end{aligned}
\end{equation}
Combining all the above estimates leads to
\begin{equation}\no
\| ({\bf z}, z_5) \|_{\mc{W}} \le C_* (\eps + \de_0 )  \| ({\bf \hat{z}}, \hat{z}_5) \|_{\mc{W}}.
\end{equation}
Since $\de_0 = \sqrt{\eps}$, choosing $\eps < \eps_0 = \f 1{16 C_*}$, then $\| ({\bf z}, z_5) \|_{\mc{W}} \le \f 12 \| ({\bf \hat{z}}, \hat{z}_5) \|_{\mc{W}}$ and $\mc{T}$ is a contraction mapping in the weak norm $\| \cdot \|_{\mc{W}}$. Then there exists a unique fixed point $({\bf v}, v_5) \in \mc{V}$ such that $\mc{T} ({\bf \hat{v}}, \hat{v}_5) = ({\bf v}, v_5)$.

It remains to prove that the auxiliary function $\Pi$ associating with the fixed point $({\bf v}, v_5)$ satisfies $\Pi \equiv 0 $ in $\mb{D}$. Indeed, due to the definitions of $G_i ({\bf v}, v_5)$ for $i = 1, 2, 3$, there is
\begin{equation}\label{pf40}
\begin{cases}
  - \p_{y_1} \Pi = D_2^{v_5} v_3 - D_3^{v_5} v_2 - \tilde{\om}_1,  \\
  - \p_{y_2} \Pi = D_3^{v_5} v_1 - D_1^{v_5} v_3 - \f {v_2 \tilde{\om}_1}{\bar{u} (D_0^{v_5}) + v_1} - \f 1{\bar{u} (D_0^{v_5}) + v_1} D_2^{v_5} v_4, \\
  - \p_{y_3} \Pi = D_1^{v_5} v_2 - D_2^{v_5} v_1 - \f {v_3 \tilde{\om}_1}{\bar{u} (D_0^{v_5}) + v_1} + \f 1{\bar{u} (D_0^{v_5}) + v_1} D_2^{v_5} v_4.
\end{cases}
\end{equation}
Since $\tilde{\om}_1$ satisfies \eqref{om7} and
\begin{equation*}
D_1^{v_5} D_2^{v_5} = D_2^{v_5} D_1^{v_5}, \,
D_2^{v_5} D_3^{v_5} = D_3^{v_5} D_2^{v_5}, \,
D_1^{v_5} D_3^{v_5} = D_3^{v_5} D_1^{v_5},
\end{equation*}
one can infer from \eqref{pf40} that
\begin{equation*}
- D_1^{v_5} (\p_{y_1} \Pi) - D_2^{v_5} (\p_{y_2} \Pi) - D_3^{v_5} (\p_{y_3} \Pi) = 0, \q \text{in } \mb{D}.
\end{equation*}
Since $\| v_5 \|_{C^{3, \al } (\overline{D})} \le \de_0$ with sufficiently small $\de_0$, then $\Pi$ satisfies a second order uniformly elliptic equation without zeroth order term. It follows from $\Pi = 0$ on $\p \mb{D}$ and the maximum principle that $\Pi \equiv 0$ in $\mb{D}$. Thus $({\bf v}, v_5)$ is the solution which we desire. We complete the proof of Theorem \ref{main}.

\section{Appendix}\label{appendix}\noindent
\par In this appendix, we give the explicit expressions of $ J ({\bf v} (L_s, y'), v_5 (y'))$, $ J_i ({\bf v} (L_s, y'), v_5 (y'))(i=2,3) $ and $R_{0i}$, $i = 1, 2, 3$ needed in \eqref{rh12}-\eqref{euler6}.
\begin{equation*}
\begin{aligned}
& J ({\bf v} (L_s, y'), v_5 (y')) = \b(\tilde{\rho} ({\bf v} (L_s, y'), v_5 (y')) v_2^2 (L_s, y') + \tilde{P} ({\bf v} (L_s, y'), v_5 (y')) - (\rho^- (u_2^-)^2 + P^-) (L_s + v_5, y')\b) \\
&\q
\times
\b(\tilde{\rho} ({\bf v} (L_s, y'), v_5 (y')) v_3^2 (L_s, y') + \tilde{P} ({\bf v} (L_s, y'), v_5 (y')) - (\rho^- (u_3^-)^2 + P^-) (L_s + v_5, y')\b) \\
& \q
-
\b[ \tilde{\rho}({\bf v} (L_s, y'), v_5 (y')) ( v_2 v_3) (L_s, y')
- (\rho^-  u_2^- u_3^-) (L_s + v_5, y')
\b]^2
, \end{aligned}\end{equation*}
\begin{equation*}
\begin{aligned}
& J_2 ({\bf v} (L_s, y'), v_5 (y')) =
\b( \tilde{\rho}  ({\bf v} (L_s, y'), v_5 (y')) v_3^2 (L_s, y') + \tilde{P} ({\bf v} (L_s, y'), v_5 (y'))
- (\rho^- (u_3^-)^2 + P^-) (L_s + v_5, y')
\b) \\
& \q
\times
\b( \tilde{\rho} ({\bf v} (L_s, y'), v_5 (y')) (v_1 (L_s, y') + \bar{u} (L_s + v_5)) v_2 (L_s, y')
- (\rho^- u_1^- u_2^-) (L_s, y')
\b) \\
& \q
- \b( \tilde{\rho} ({\bf v} (L_s, y'), v_5 (y')) (v_1 (L_s, y') + \bar{u} (L_s + v_5)) v_3 (L_s, y') - (\rho^- u_1^- u_3^-) (L_s, y')
\b) \\
& \q \times
\b(
\tilde{\rho} ({\bf v} (L_s, y'), v_5 (y')) (v_2 v_3) (L_s, y') - (\rho^- u_2^- u_3^-) (L_s, y')
\b), \\
& J_3 ({\bf v} (L_s, y'), v_5 (y')) =
\b( \tilde{\rho}  ({\bf v} (L_s, y'), v_5 (y')) v_2^2 (L_s, y') + \tilde{P} ({\bf v} (L_s, y'), v_5 (y'))
- (\rho^- (u_2^-)^2 + P^-) (L_s + v_5, y')
\b) \\
& \q
\times
\b( \tilde{\rho} ({\bf v} (L_s, y'), v_5 (y')) (v_1 (L_s, y') + \bar{u} (L_s + v_5)) v_3 (L_s, y')
- (\rho^- u_1^- u_3^-) (L_s, y')
\b) \\
& \q
- \b( \tilde{\rho} ({\bf v} (L_s, y'), v_5 (y')) (v_1 (L_s, y') + \bar{u} (L_s + v_5)) v_2 (L_s, y') - (\rho^- u_1^- u_3^-) (L_s, y')
\b) \\
& \q \times
\b(
\tilde{\rho} ({\bf v} (L_s, y'), v_5 (y')) (v_2 v_3) (L_s, y') - (\rho^- u_2^- u_3^-) (L_s, y')
\b),
\end{aligned}
\end{equation*}
and
\begin{equation*}
\begin{aligned}
& R_{01} ({\bf v} (L_s, y'), v_5 (y')) =
- [\bar{\rho} \bar{u}] (L_s + v_5)
+ (\rho^- u_1^- ) (L_s + v_5, y') - (\bar{\rho}^- \bar{u}^-) (L_s + v_5)\\
&\q
+ \sum_{i = 2}^3 (\rho ({\bf v} (L_s, y'), v_5 (y')) v_i  (L_s, y') - (\rho^- u_i^-) (L_s + v_5, y')) \f {J_i ({\bf v} (L_s, y'), v_5 (y'))}{J ({\bf v} (L_s, y'), v_5 (y'))}\\
&\q
- (v_1 (L_s , y') + \bar{u}^+ (L_s + v_5) - \bar{u}^+ (L_s)) (\tilde{\rho} ({\bf v} (L_s, y'), v_5) - \bar{\rho}^+ (L_s + v_5))
- (\bar{\rho}^+ (L_s + v_5) - \bar{\rho}^+ (L_s)) v_1 (L_s, y'),
\end{aligned}\end{equation*}
\begin{equation*}\begin{aligned}
& R_{02} ({\bf v} (L_s, y'), v_5 (y')) =
- \left([\bar{\rho} \bar{u}^2+\bar P] (L_s+v_5)-((\bar \rho^+-\bar \rho^-)\bar f)(L_s)v_5\right)
 \\
& \q
+ (\rho^- (u_1^-)^2 + P^-) (L_s + v_5, y')
- (\bar{\rho}^- (\bar{u}^-)^2 + \bar{P}^-) (L_s + v_5) \\
& \q
- \b(
\tilde{\rho} ({\bf v} (L_s, y'), v_5 (y')) (v_1 (L_s + v_5, y') + \bar{u}^+ (L_s + v_5))^2 + \tilde{P} ({\bf v} (L_s, y'), v_5 (y'))
- (\bar{\rho} (\bar{u}^+)^2 + \bar{P}^+) (L_s + v_5)\\\no
& \q
- [(\bar{u}^+ (L_s))^2 + c^2 (\bar{\rho}^+ (L_s))] (\tilde{\rho} ({\bf v} (L_s, y'), v_5 (y')) - \bar{\rho}^+ (L_s + v_5))
+ 2 (\bar{\rho}^+ \bar{u}^+ ) (L_s) v_1 (L_s, y')
\b)\\
& \q
+ \sum_{i = 2}^3
(\tilde{\rho} ({\bf v} (L_s, y'), v_5 (y')) (v_1 (L_s, y') + \bar{u} (L_s + v_5)) v_i (L_s, y') - (\rho^- u_1^- u_i^-) (L_s + v_5, y') )
\f {J_i ({\bf v} (L_s, y'), v_5 (y'))}{J ({\bf v} (L_s, y'), v_5 (y'))}, \\
& R_{03} ({\bf v} (L_s, y'), v_5 (y')) =
(\bar{u}^+ (L_s + v_5) - \bar{u}^+ (L_s)) v_1 (L_s , y')
- \f {c^2 (\bar{\rho}^+ (L_s))}{\bar{\rho}^+ (L_s)} (\tilde{\rho} ({\bf v} (L_s, y'), v_5 (y')) - \bar{\rho}^+ (L_s + v_5) )
 \\
& \q
+ \f {\ga }{\ga - 1}
((\tilde{\rho}({\bf v} (L_s, y'), v_5 (y')))^{\ga - 1} - (\bar{\rho}^+ (L_s + v_5))^{\ga - 1})
+ \f 12 \sum_{i = 1}^3 v_i^2 (L_s , y')
- \eps \Phi_0 (L_s, y').
\end{aligned}
\end{equation*}
\par {\bf Acknowledgement.} Weng is partially supported by National Natural Science Foundation of China 12071359, 12221001.

\end{document}